\definecolor{darkred}{rgb}{0.7,0,0} 
\newcommand{\defncolor}{\color{darkred}}
\newcommand{\defn}[1]{{\defncolor\textit{#1}}} 
\newcommand{\PP}{\mathbb{P}}
\newcommand{\NN}{\mathbb{N}}
\newcommand{\zz}{\mathbf{z}}
\newcommand{\V}{\mathbf{V}}
\newtheorem{theorem}{Theorem}
\numberwithin{theorem}{section} 
\newtheorem{prop}[theorem]{Proposition}
\newtheorem{lemma}[theorem]{Lemma}
\newtheorem{coro}[theorem]{Corollary}
\newtheorem{definition}[theorem]{Definition}
\theoremstyle{remark}
\newtheorem{remark}{Remark}
\newtheorem{example}{Example}
\newtheorem{question}{Question}
\newenvironment{mygraph}[1][]{%
\begin{tikzpicture}[#1,every loop/.style={distance=10mm},every node/.style={circle, draw, fill=black,
                    inner sep=0pt, minimum width=3.5pt,font=\small}]}
{\end{tikzpicture}}
\newenvironment{mygraph2}[1][]{%
\begin{tikzpicture}[#1,every node/.style={circle, draw, fill=black,
                    inner sep=0pt, minimum width=3pt,font=\small}]}
{\end{tikzpicture}}
\newcommand{\ZZ}{\mathbb{Z}}
\newcommand{\QQ}{\mathbb{Q}}
\newcommand{\MM}{\mathbb{M}}
\newcommand{\x}{\mathbf{x}}
\newcommand{\st}{\mathfrak{st}}
\title[Marked Graphs and the chromatic symmetric function]{Marked graphs and the chromatic symmetric function}
\author[Aliste-Prieto]{Jos\'e Aliste-Prieto}
\address[J. Aliste-Prieto]{Departamento de Matem\'aticas, Facultad de Ciencias Exactas, Universidad Andres Bello, Chile}
\email{jose.aliste@unab.cl}
\urladdr{{http://www.mat-unab.cl/~jaliste/}}
\author[De Mier] {Anna de Mier}
\address[A. de Mier]{Departament de Matem\`atiques and Institut de Matem\`atiques de la UPC IMTech, Universitat Polit\`ecnica de Catalunya - BarcelonaTech, Barcelona 08034, Spain}
\email{anna.de.mier@upc.edu}
\urladdr{https://web.mat.upc.edu/anna.de.mier}
\author[Orellana]{Rosa Orellana}
\address[R. Orellana]{Mathematics Department, Dartmouth College, 
Hanover, NH 03755, U.S.A.}
\email{Rosa.C.Orellana@dartmouth.edu}
\urladdr{{https://math.dartmouth.edu/~orellana/}}
\author[Zamora] {Jos\'e Zamora}
\address[J. Zamora]{Departamento de Matem\'aticas, Facultad de ciencias exactas, Universidad Andres Bello, Chile}
\email{josezamora@unab.cl}
\urladdr{{http://www.mat-unab.cl/~jzamora/}}
\date{\today}
\begin{document}

\begin{abstract}
The main result of this paper is the introduction of marked graphs and the marked graph polynomials ($M$-polynomial) associated with them. These polynomials can be defined via a deletion-contraction operation.  These polynomials are a generalization of the $W$-polynomial introduced by Noble and Welsh and a specialization of the $\mathbf{V}$-polynomial introduced by Ellis-Monaghan and Moffatt. In addition, we describe an important specialization of $M$-polynomial which we call the $D$-polynomial.  Furthermore, we give an efficient algorithm for computing the chromatic symmetric function of a graph in the \emph{star-basis} of symmetric functions. As an application of these tools, we prove that  proper trees of diameter at most 5 can be reconstructed from its chromatic symmetric function. 

\end{abstract}
\maketitle

\section{Introduction}

In the mid 1990s Stanley \cite{stanley95symmetric} introduced a symmetric function {generalization of the chromatic polynomial}. For any graph $G$, if $x_1, x_2, \ldots$ are commuting variables then 
\[X_G = \sum_{\kappa}x_{\kappa(v_1)}x_{\kappa(v_2)}\cdots x_{\kappa(v_n)}~, \]
where $v_1, \ldots, v_n$ are the vertices of $G$ and the sum runs over all proper colorings $\kappa$ of $G$ with positive integers. The function $X_G$ is known as the \emph{chromatic symmetric function} of $G$.  If we set $x_1= x_2= \ldots = x_k = 1$ and all other variables to zero, $X_G$ specializes to the evaluation  $\chi_G(k)$ of the one variable chromatic polynomial, which counts the number of proper colorings of $G$ with $k$ colors.  In his seminal paper Stanley expressed $X_G$ using the classical bases of symmetric 
functions, proved many results and made several conjectures related to $X_G$. This function has generated a lot of research, for example \cite{aliste2014proper, crew2020deletion, dahlberg2017lollipop, gasharov1999stanley, guay2013modular,  heil2019algorithm, loebl2018isomorphism, martin2008distinguishing, orellana2014graphs,shareshian2016chromatic}.  

One open question is \emph{Stanley's tree isomorphism problem} which asks whether the chromatic symmetric function distinguishes non-isomorphic trees. This conjecture is known to hold for trees with less than 30 vertices  \cite{heil2019algorithm} and it has been verified for several subclasses of trees \cite{aliste2017trees,aliste2014proper,loebl2018isomorphism,
martin2008distinguishing,orellana2014graphs}.

The chromatic symmetric function  $X_G$ does not satisfy the deletion-contraction rule that is used to compute the one variable polynomial $\chi_G(k)$.  Nevertheless, it admits several linear modular relations: Guay-Paquet \cite{guay2013modular} and Orellana and Scott \cite{orellana2014graphs} found a four term modular relation for graphs with triangles. This relation was generalized to graphs with bigger cycles by Dalhberg and van Willigenburg \cite{dahlberg2017lollipop}.
The chromatic symmetric function is a morphism from the Hopf algebra of simple graphs to the Hopf algebra of symmetric functions \cite[Section 7.3]{grinberg2020hopf}; in  \cite{penaguiao2018kernel}, Penaguiao proved that the modular relations of Guay-Paquet, Orellana and Scott together with the isomorphism relation on graphs span  the  kernel of  the  chromatic  symmetric  function. 
In this paper we introduce the \emph{deletion-near-contraction relation} which can be interpreted as a faster version of the relation of Guay-Paquet,  Orellana and Scott.

In \cite{noble99weighted}, Noble and Welsh studied a larger class of graphs known as \emph{weighted graphs}, these are graphs with the vertices labeled by positive integers, called \emph{weights}. In this setting, 
they introduced the $W$-polynomial using a deletion-contraction rule, and showed that via a simple substitution one can obtain $X_G$ from the $W$-polynomial as a linear combination of the power sum basis of symmetric functions.  Recently, Crew and Spirkl \cite{crew2020deletion} introduced the weighted chromatic symmetric function $X_{(G,\omega)}$, for any weighed graph $(G,\omega)$, which satisfies a deletion-contraction relation and whose power sum expansion is a specialization of the $W$-polynomial.

In \cite{Scott-thesis}, Scott postulated that working with different bases of symmetric functions arising from graphs might lead to an approach to Stanley's tree isomorphism problem. In fact Scott uses a multiplicative basis constructed from paths in his work. Cho and van Willigenburg \cite{cho2016chromatic} generalized this  construction and defined multiplicative bases of symmetric functions from any sequence of connected graphs $(G_n)_n$ such that for each $n$ the graph $G_n$ has $n$ vertices. In this article we are interested in working with the basis constructed from star graphs, that is, when $G_n$ is the tree with one vertex of degree $n-1$ and all other vertices having degree $1$.  The path-basis and star-basis have been used in the study of the chromatic symmetric function in several works \cite{aliniaeifard2020extended, Scott-thesis,  chan2021tree}.
   
In this paper we give an efficient algorithm for computing $X_G$ in the star basis.  This algorithm relies on the deletion-near-contraction relation. To further exploit this relation, we work with a larger class of graphs which we call \emph{marked graphs}, these are graphs where each vertex is labeled by a pair of integers $(w,d)$. 
The number $w$ represents a vertex weight as in the $W$-polynomial, and $d$ is a non-negative integer that keeps track of edge information.
We show that we can define a graph polynomial for marked graphs that we call the \emph{Marked graph polynomial} or \emph{M-polynomial}. This polynomial is defined via a deletion-contraction rule and can be seen as a specialization of the $\mathbf{V}$-polynomial of Ellis-Monaghan and Moffatt \cite{ellis2011tutte}.
By applying a substitution that we call the \emph{undotting substitution}, we obtain an important specialization of the $M$-polynomial. We call this specialization the $D$-polynomial and from it we obtain the star expansion of the chromatic symmetric function  via a substitution of the variables. 

This theory leads to a new technique to study Stanley's tree isomorphism problem. In particular, we use the $D$-polynomial to show that Stanley's tree isomorphism problem has a positive answer for proper trees with diameter less than or equal to $5$. A tree is \emph{proper} if each internal vertex is adjacent  at least one leaf. The numbers of trees of diameter 4 and 5 are given by sequences A000094 and A00147 in the OEIS~\cite{OEIS}. Note for instance that the number of trees of diameter $4$ on $n$ vertices grows as the number of partitions of the integer $n$. 

This paper is organized as follows: In Section~\ref{sec:prelim} we recall the main definitions and results that will be needed in the paper. Next, in Section~\ref{sec:algo} we introduce the deletion-near-contraction rule and give our algorithm. In Section~\ref{sec:Mpoly} we introduce marked graphs, the $M$-polynomial and prove some of its properties. In Section~\ref{sec:Dpoly} we prove our main result that $X_G$ is a specialization of the $M$-polynomial, to do this we introduce the $D$-polynomial and the \emph{leaf-absorption} operation. In addition, we characterize when the substitution providing the specialization of the $M$-polynomial into the $D$-polynomial produces a cancellation-free expression. In Section~\ref{sec:Mp} we study the relationship between the $M$-polynomial and the $D$-polynomial. 

Finally, in Section~\ref{sec:app} we discuss how the results of this paper can be used to further study Stanley's tree isomorphism problem and apply this discussion to show that all proper trees up to diameter $5$ are distinguished up to isomorphism by the chromatic symmetric function.

\subsection*{Acknowledgements}
 JAP was partially supported by FONDECYT REGULAR 1160975 CONICYT Chile. AdM was partially supported by PID2020-113082GB-I00 funded by MCIN/AEI. RO was partially supported by NSF grant DMS--300512. JZ was partially supported by FONDECYT REGULAR 1180994 CONICYT Chile.

\section{Preliminaries: Chromatic symmetric functions and graph polynomials}\label{sec:prelim}
\subsection{Graph theory background}\label{graphtheory} For the basic notions in graph theory we refer the reader to any introductory graph theory text such as \cite{west2001introduction}.  For convenience we include some of the definitions here. 

A \defn{graph} $G$ is a pair $(V,E)$, where $V$ is a finite set of \defn{vertices} and $E$ is a multiset of \emph{edges}.
Each edge $e$  is a (unordered) pair of two (possibly the same) vertices $u$ and $v$. { The vertices $u,v$ are called the \defn{endpoints} of $e$ and we also say that $e$ is \emph{incident} with $u$ (and $v$)}.
Observe that our definition of a graph allows for multiple (also called parallel) edges and loops.
A \defn{loop} of $G$ is an edge connecting a vertex to itself and we say that two edges are \defn{parallel} (or multiple) if they connect the same vertices.  If a graph $G$ does not contain any parallel edges nor loops, then we say that $G$ is \defn{simple}. Given a graph $G$ we denote by $G^s$ the simple graph obtained by replacing each maximal class of parallel edges by a single edge and removing all loops.

Let $G = (V,E)$ be a graph.  A \defn{pendant edge} of $G$ is an edge incident to a vertex of degree $1$, which is called a \defn{leaf} of $G$. A \emph{non-loop} edge is called \defn{internal} if both its endpoints have degree greater than 1. 

Let $\PP$ be the set of positive integers. 
A \defn{proper coloring} of $G$ is a function $\kappa:V\rightarrow\PP$ such that $\kappa(u)\neq \kappa(v)$ whenever $uv\in E$. If the image of $\kappa$ is a subset of $\{1,2,\ldots,k\}$, we say that $\kappa$ is a $k$-coloring.  The number of $k$-colorings of $G$ is denoted  $\chi_G(k)$. It is well-known that {$\chi_G(k)$} is a polynomial in $k$ and it is called the
the \defn{chromatic polynomial} of $G$.

{ One key fact about the chromatic polynomial is that it satisfies a \defn{deletion-contraction} formula}: If $e$ is a non-loop edge of $G$ incident to vertices $u$ and $v$ then 
\begin{equation}\label{DC-rel}
\chi_G(k) = \chi_{G\setminus e}(k)-\chi_{G/e}(k),
\end{equation}
where $G\setminus e$ is the graph obtained by deleting $e$ and $G/e$ is the graph obtained by \emph{contracting} $e$, that is, $G{/ e}$ is the graph obtained by deleting $e$, $u$ and $v$, then adding a new vertex $v_e$ and replacing each edge $e'$ in $G$ incident with $u$ or $v$ with an edge $e''$ that is the same edge as $e'$ but with $u$ and $v$ replaced by $v_e$. We say that $G/e$ contracts $e$ into the \defn{contracted vertex} $v_e$.

\begin{figure}[hbt!]
\begin{center}
\begin{tikzpicture}[scale = 0.5,thick, baseline={(0,-1ex/2)}]
\tikzstyle{vertex} = [black, shape = circle, fill=black, minimum size = 3.5pt, inner sep = 1pt]
\node[vertex] (G7) at (12.0, -2) [shape = circle, draw] {};
\node[vertex] (G6) at (10.0, 0) [shape = circle, draw] {};
\node[vertex] (G5) at (12.0, 2) [shape = circle, draw] {};
\node[vertex] (G4) at (14.0, 2) [shape = circle, draw] {};
\node[vertex] (G3) at (16.0, 1.5) [shape = circle, draw] {};
\node[vertex] (G2) at (14.0, -2) [shape = circle, draw] {};
\node[vertex] (G1) at (16.0, -1.5) [shape = circle, draw] {};
\draw[thick] (G7)--(G6);
\draw[thick] (G7) -- (G2);
\draw[thick] (G6) -- (G5);
\draw[thick] (G5) -- (G4);
\draw[thick] (G4) -- (G3);
\draw[thick] (G3) -- (G1);
\draw[thick] (G1) -- (G2);
\draw[thick] (G7) -- (G5) node[midway,left] {$e$};
\draw[thick] (G5) -- (G2);
\draw[thick] (G4)  .. controls +(0.5, -0.5) and +( 0.5, 0.5) .. (G2);
\draw[thick] (G2)  .. controls +(-0.5, 0.5) and +( -0.5, -0.5) .. (G4);
\node[] at (12,-2.5) {$u$};
\node[] at (12,2.5) {$v$};
\end{tikzpicture}\qquad \qquad \quad \quad
\begin{tikzpicture}[scale = 0.5,thick, baseline={(0,-1ex/2)}]
\tikzstyle{vertex} = [black, shape = circle, fill=black, minimum size = 3.5pt, inner sep = 1pt]
\node[vertex] (G6) at (10.0, 0) [shape = circle, draw] {};
\node[vertex] (G5) at (12.0, 0) [shape = circle, draw] {};
\node[vertex] (G4) at (14.0, 2) [shape = circle, draw] {};
\node[vertex] (G3) at (16.0, 1.5) [shape = circle, draw] {};
\node[vertex] (G2) at (14.0, -2) [shape = circle, draw] {};
\node[vertex] (G1) at (16.0, -1.5) [shape = circle, draw] {};
\draw[thick] (G5)  .. controls +(-0.5, 0.5) and +( 0.5,0.5) .. (G6);
\draw[thick] (G6)  .. controls +(0.5, -0.5) and +(-0.5,-0.5) .. (G5);
\draw[thick] (G2)  .. controls +(-0.5, -0.5) and +( -0.5,0.5) .. (G5);
\draw[thick] (G5)  .. controls +(0.5, 0.5) and +( 0.5,-0.5) .. (G2);
\draw[thick] (G4)  .. controls +(0.5, -0.5) and +( 0.5,0.5) .. (G2);
\draw[thick] (G2)  .. controls +(-0.5, 0.5) and +( -0.5,-0.5) .. (G4);
\draw[thick] (G5) -- (G4);
\draw[thick] (G4) -- (G3);
\draw[thick] (G3) -- (G1);
\draw[thick] (G1) -- (G2);
\node[] at (12,.7) {$v_e$};
\end{tikzpicture}
\end{center}
\caption{Classical contraction of edges}
\label{fig:contraction}
\end{figure}
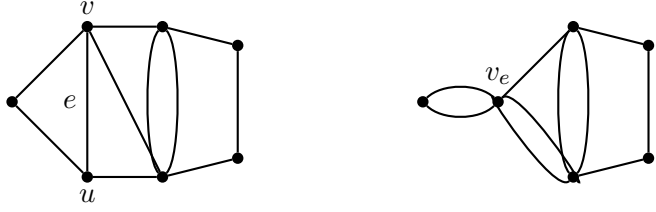

\begin{remark}
If $G$ is a graph with loops, then it cannot be { properly} colored and hence $\chi_G(k)=0$. { On the other hand,} if $G$ does not have loops but has parallel edges, removing duplicated edges does not affect the proper colorings of $G$. Thus, in this case $\chi_G(k)=\chi_{G^s}(k)$. 
Because of this {fact}, the chromatic polynomial is usually defined for simple graphs. Here we choose to define it for (general) graphs for consistency with the other invariants being consider in this article. 
In addition, we will apply the contraction operation \emph{only} to non-loop edges. Thus, the contraction operation will always reduce the number of vertices.
\end{remark}

\subsection{The chromatic symmetric function}
\label{CSF}
Before introducing the chromatic symmetric function, we review some basic facts about symmetric functions that will be needed in this paper. For more details, see \emph{e.g.} \cite{macdonald1998symmetric, sagan2001symmetric, stanley1999enumerative}.

A \defn{partition} is a sequence $\lambda = (\lambda_1,\lambda_2,\cdots,\lambda_l)$ of positive integers such that $\lambda_1\geq \lambda_2\geq\cdots\geq\lambda_l$. Each $\lambda_i$ is called a \defn{part} of $\lambda$ and we say that $\lambda$ is a partition of $n$, denoted $\lambda\vdash n$, if $\sum_i\lambda_i = n$. The \defn{length} of $\lambda$
is the number of parts. 

Let $\mathbf{x} = x_1,x_2,\ldots$ be a countably infinite family of commuting indeterminates and consider the formal power series ring $\QQ[[\mathbf{x}]]$.  A monomial 
$x_{i_1}^{\lambda_1}x_{i_2}^{\lambda_2}\cdots x_{i_l}^{\lambda_l}$ has \defn{degree} $n$ if $n = \sum_{i}\lambda_i$ and $f(\mathbf{x})$ in $\QQ[[\mathbf{x}]]$ is \defn{homogeneous of degree $n$} if all monomials in $f(\mathbf{x})$ have degree $n$. 

Denote by $S_n$ the symmetric group with $n$ elements. There is a natural action of $\bigcup_n S_n$ on $\QQ[[\mathbf{x}]]$, which is defined  by 
\[\pi f(x_1,x_2,x_3,\ldots) = f(x_{\pi(1)}, x_{\pi(2)}, x_{\pi(3)},\ldots)\]
where $\pi\in\cup_n S_n$, that is, there is a positive integer $k$ such that $\pi(i)=i$ for all $i>k$. The ring of \defn{symmetric functions}, denoted $\mathsf{Sym}$, is the subset of $\QQ[[\mathbf{x}]]$ invariant under this action. We denote by $\mathsf{Sym}_n$ the homogeneous symmetric functions of degree $n$.
It is well-known that the dimension of $\mathsf{Sym}_n$ is equal to the number of partitions of $n$. In addition, there are several linear bases for this ring.  One of the best know basis is the \defn{power sum basis} defined as follows: for any positive integer $k$ let $p_k = \sum_{i\geq 1}x_{i}^k$
and for a partition $\lambda= (\lambda_1,\lambda_2,\cdots,\lambda_l)$ of $n$  define
\[p_\lambda = p_{\lambda_1}p_{\lambda_2}\cdots p_{\lambda_l}.\]
The family $\{p_\lambda\mid \lambda \vdash n\}$ is a basis for 
$\mathsf{Sym}_n$. 
The \defn{chromatic symmetric function}  \cite{stanley95symmetric} of a graph $G$ is the symmetric function defined by
\[X_G = \sum_{\kappa} x_{\kappa(v_1)}x_{\kappa(v_2)}\cdots x_{\kappa(v_n)},\]
where the sum ranges over all proper colorings $\kappa$ of $G$ and $\{v_1,v_2,\ldots,v_n\}$ denote the vertices of $G$. As it is the case for the chromatic polynomial, the chromatic symmetric function of a graph with loops is zero, and removing duplicated edges from $G$ does not change its chromatic symmetric function. 

The chromatic symmetric function $X_G$ is a homogeneous symmetric function of degree  the order of $G$.  In \cite{stanley95symmetric}, Stanley gave the expansion of $X_G$ in several bases of symmetric functions.  Here we describe the expansion in the {power sum} basis.  Given $A\subseteq E$, the \defn{spanning subgraph} of $G$ with edge set $A$ is the graph $G|_A$  with 
the same vertex set as $G$ and edge set $A$.  If $G$ has $n$ vertices, we let $\lambda(A)$ be the partition of $n$ whose parts are the orders of the connected components of $G|_A$.

\begin{theorem}[{\cite[Theorem 2.5]{stanley95symmetric}}]
\label{stanley1}
We have 
\[X_G = \sum_{A\subseteq E} (-1)^{|A|}p_{\lambda(A)}.\]
\end{theorem}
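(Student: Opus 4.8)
The plan is to prove the identity by inclusion--exclusion over the set of monochromatic edges. For an arbitrary colouring $\kappa\colon V\to\PP$ (not necessarily proper) write $x^{\kappa}=x_{\kappa(v_1)}\cdots x_{\kappa(v_n)}$ and let $B(\kappa)=\{uv\in E : \kappa(u)=\kappa(v)\}$ be the set of edges whose two endpoints receive the same colour. A colouring is proper exactly when $B(\kappa)=\varnothing$, so $X_G=\sum_{\kappa\,:\,B(\kappa)=\varnothing} x^{\kappa}$. First I would use the elementary identity stating that $\sum_{A\subseteq S}(-1)^{|A|}$ equals $1$ if $S=\varnothing$ and $0$ otherwise, applied with $S=B(\kappa)$, and then exchange the two summations:
\[
X_G=\sum_{\kappa}\Bigl(\sum_{A\subseteq B(\kappa)}(-1)^{|A|}\Bigr)x^{\kappa}
      =\sum_{A\subseteq E}(-1)^{|A|}\sum_{\kappa\,:\,A\subseteq B(\kappa)} x^{\kappa}.
\]
The interchange is legitimate because $E$ is finite and, in each degree $n=|V|$, only finitely many colourings contribute a given monomial, so the rearrangement is valid coefficient by coefficient.

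The second step is to evaluate the inner sum. The condition $A\subseteq B(\kappa)$ means precisely that $\kappa$ is constant on each connected component of the spanning subgraph $G|_A$. Writing $\lambda(A)=(\lambda_1,\dots,\lambda_\ell)$ for the partition recording the orders of these components, a colouring $\kappa$ with $A\subseteq B(\kappa)$ is the same datum as a choice of one colour $c_j\in\PP$ for each component $j$, and it contributes the monomial $\prod_{j=1}^{\ell}x_{c_j}^{\lambda_j}$. Summing over all such choices factorises as a product over components:
\[
\sum_{\kappa\,:\,A\subseteq B(\kappa)} x^{\kappa}
   =\prod_{j=1}^{\ell}\Bigl(\sum_{c\ge 1}x_c^{\lambda_j}\Bigr)
   =\prod_{j=1}^{\ell}p_{\lambda_j}
   =p_{\lambda(A)}.
\]
Substituting this back into the previous display gives $X_G=\sum_{A\subseteq E}(-1)^{|A|}p_{\lambda(A)}$, as desired.

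I do not anticipate a genuine obstacle: this is the classical M\"obius/inclusion--exclusion argument, and the only point deserving a sentence of care is the exchange of the (a priori infinite) sum over colourings with the finite sum over edge subsets, which is handled as above by the local finiteness of the monomial expansion in each degree. A more pedantic alternative would be to run the whole argument inside $\QQ[[x_1,\dots,x_N]]$ for each finite $N$ and then let $N\to\infty$, but this seems unnecessary.
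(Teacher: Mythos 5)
Your proof is correct; note that the paper does not prove this statement at all but simply cites it from Stanley, and your inclusion--exclusion over the monochromatic edge set $B(\kappa)$ is essentially Stanley's original argument for his Theorem 2.5. The only detail worth a passing remark in the present paper's conventions is that $E$ may be a multiset with loops and parallel edges, but your argument handles this verbatim: a loop is always in $B(\kappa)$, so both sides vanish, and parallel edges are simply treated as distinct elements of $E$.
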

It is known that when $G$ is a tree, the partition $\lambda(A)$ has length $n-|A|$. Thus, in this case, given a  partition $\lambda$ of $n$, the absolute value of the coefficient of $p_\lambda$ in $X_G$ counts the number of edge sets $A$ such that the orders of the connected components of $G|_A$ induce the partition $\lambda$.

In \cite{cho2016chromatic}, Cho and van Willigenburg constructed new bases for the space of symmetric functions by using chromatic symmetric functions. 
Given two graphs $G$ and $H$, we use $G\sqcup H$ to denote the disjoint union of $G$ and $H$.
\begin{theorem}[\cite{cho2016chromatic}]
\label{thm:cho}
For any positive integer $k$, let $G_k$ denote a connected graph with $k$ vertices and $\{G_k\}_{k\in\NN}$ be a family of these graphs.  Given $\lambda\vdash n$ of length $l$, define $G_\lambda = G_{\lambda_1}\sqcup G_{\lambda_2}\sqcup\cdots\sqcup G_{\lambda_l}$. The set $\{X_{G_{\lambda}}\mid \lambda\vdash n\}$ is a basis for $\mathsf{Sym}_n$.
\end{theorem}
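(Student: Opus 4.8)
The plan is to prove that $\{X_{G_\lambda}\mid\lambda\vdash n\}$ is linearly independent; since this set has exactly $\dim\mathsf{Sym}_n$ elements, independence is equivalent to being a basis. I would get independence by showing that the matrix expressing $\{X_{G_\lambda}\}_{\lambda\vdash n}$ in the power sum basis $\{p_\lambda\}_{\lambda\vdash n}$ is triangular with nonzero diagonal with respect to the dominance order on partitions of $n$, where $\mu\trianglelefteq\lambda$ means $\mu_1+\cdots+\mu_i\le\lambda_1+\cdots+\lambda_i$ for all $i$ (parts padded by zeros). I will use the standard fact that if $\mu$ is a refinement of $\lambda$ — the parts of $\lambda$ are sums of disjoint groups of parts of $\mu$ — then $\mu\trianglelefteq\lambda$, since merging parts only increases partial sums. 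We may assume each $G_k$ is loopless, as otherwise $X_{G_k}=0$ and the statement is false.

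The crucial ingredient is the following claim: \emph{if $G$ is a loopless connected graph on $k$ vertices, then the coefficient of $p_{(k)}$ in $X_G$ is nonzero; in fact it equals $(-1)^{k-1}$ times a positive integer.} By Theorem~\ref{stanley1}, since $\lambda(A)=(k)$ precisely when $G|_A$ is connected, this coefficient equals $c(G):=\sum_{A}(-1)^{|A|}$, the sum over edge sets $A\subseteq E$ with $G|_A$ connected. I would prove the claim by induction on $|E(G)|$ via $c(G)=c(G\setminus e)-c(G/e)$ for an arbitrary edge $e$: the connected spanning edge sets avoiding $e$ are exactly those of $G\setminus e$, while those containing $e$ correspond, with a sign flip, to the connected spanning edge sets of $G/e$. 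The base case $|E(G)|=0$ gives $k=1$ and $c(G)=1$. For the step, fix any edge $e$. Then $G\setminus e$ is loopless on $k$ vertices with fewer edges, so $(-1)^{k-1}c(G\setminus e)>0$ if it is connected and $c(G\setminus e)=0$ if it is disconnected; and $G/e$ is connected on $k-1$ vertices with fewer edges, so $(-1)^{k-1}c(G/e)<0$ if it is loopless (by induction), while $c(G/e)=0$ if it has a loop, since a loop is irrelevant to connectivity and the terms cancel in pairs. The two degenerate cases — $e$ a bridge, and $e$ having a parallel edge — cannot occur together, so in $(-1)^{k-1}c(G)=(-1)^{k-1}c(G\setminus e)-(-1)^{k-1}c(G/e)$ both summands are $\ge 0$ and at least one is $>0$; hence $(-1)^{k-1}c(G)>0$.

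To finish, $X$ is multiplicative over disjoint unions (immediate from the definition), so $X_{G_\lambda}=\prod_{i=1}^{l}X_{G_{\lambda_i}}$; writing $X_{G_{\lambda_i}}=\sum_{\mu\vdash\lambda_i}c^{(i)}_\mu p_\mu$ and using $p_\mu p_\nu=p_{\mu\cup\nu}$ (multiset union of parts) gives
\[ X_{G_\lambda}=\sum_{\mu^{(1)},\dots,\mu^{(l)}}\Bigl(\prod_{i=1}^{l}c^{(i)}_{\mu^{(i)}}\Bigr)\,p_{\mu^{(1)}\cup\cdots\cup\mu^{(l)}}, \]
the sum over all tuples with $\mu^{(i)}\vdash\lambda_i$. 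Every partition $\rho=\mu^{(1)}\cup\cdots\cup\mu^{(l)}$ appearing here is a refinement of $\lambda$ (the parts from $\mu^{(i)}$ sum to $\lambda_i$), hence $\rho\trianglelefteq\lambda$; moreover $\rho=\lambda$ forces $\mu^{(i)}=(\lambda_i)$ for every $i$, because $\rho$ has at least $l$ parts — namely the total number of parts of the $\mu^{(i)}$ — with equality only when each $\mu^{(i)}$ is a single part, whereas $\lambda$ has exactly $l$ parts. Therefore the coefficient of $p_\lambda$ in $X_{G_\lambda}$ is $\prod_{i=1}^{l}c^{(i)}_{(\lambda_i)}=\prod_{i=1}^{l}[p_{(\lambda_i)}]X_{G_{\lambda_i}}$, which is nonzero by the claim. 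Ordering the partitions of $n$ by any linear extension of dominance, the transition matrix between $\{X_{G_\lambda}\}$ and $\{p_\lambda\}$ is triangular with nonzero diagonal, hence invertible, and $\{X_{G_\lambda}\mid\lambda\vdash n\}$ is a basis of $\mathsf{Sym}_n$.

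The main obstacle is the claim, and specifically the passage from ``$c(G)$ is some integer'' to ``$c(G)$ is nonzero'': a naive count of connected spanning subgraphs does not visibly avoid cancellation, so the point is to keep track of signs through deletion--contraction and prove the sharper, sign-definite statement $(-1)^{k-1}c(G)>0$ — this is the only place real work is needed. The remaining ingredients (multiplicativity of $X_G$, the product rule for power sums, refinement implying dominance, and the parts count pinning down the diagonal coefficient) are routine.
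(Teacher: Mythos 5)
Your proof is correct. The paper itself offers no argument for this statement --- it is quoted directly from \cite{cho2016chromatic} --- so there is no in-paper proof to compare against; your route is essentially the standard one from that reference: multiplicativity of $X$ over disjoint unions, the observation that every $p_\rho$ occurring in $X_{G_\lambda}$ has $\rho$ a refinement of $\lambda$ (hence $\rho\trianglelefteq\lambda$, with the part-count argument isolating the diagonal term), and the nonvanishing of $[p_{(k)}]X_{G_k}$ for connected $G_k$. The one place where you diverge is in how that nonvanishing is established: Cho and van Willigenburg invoke the interpretation of this coefficient via the M\"obius function of the bond lattice (equivalently, Stanley's results), whereas you give a self-contained, sign-definite deletion--contraction induction showing $(-1)^{k-1}[p_{(k)}]X_{G}>0$, carefully handling bridges and the loops created by contracting an edge with a parallel mate. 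Your version is more elementary and also covers multigraphs, which matches this paper's conventions; the only caveat, which you correctly flag, is that the $G_k$ must be assumed loopless for the statement to hold.
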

The bases constructed in Theorem~\ref{thm:cho} are called \defn{chromatic bases}. Among these, the one constructed from star graphs will play a fundamental role in this paper: Given $k$, denote by  $St_k$ the star graph with $k$ vertices, for example,

\begin{center}
$St_9 = \quad$\begin{tikzpicture}
[scale = 0.3,thick, baseline={(0,-1ex/2)}]
\tikzstyle{vertex} = [black, shape = circle, fill=black, minimum size = 3.5pt, inner sep = 1pt]
\node[vertex] (G7) at (12.0, -1.75) [shape = circle, draw] {};
\node[vertex] (G6) at (11.3, 0) [shape = circle, draw] {};
\node[vertex] (G5) at (12.0, 1.75) [shape = circle, draw] {};
\node[vertex] (G4) at (14.0, 2.5) [shape = circle, draw] {};
\node[vertex] (G3) at (16.0, 1.75) [shape = circle, draw] {};
\node[vertex] (G8) at (16.0, -1.75) [shape = circle, draw] {};
\node[vertex] (G2) at (14.0, -2.5) [shape = circle, draw] {};
\node[vertex] (G1) at (16.7, 0) [shape = circle, draw] {};
\node[vertex] (G0) at (14.0, 0) [shape = circle, draw] {};
\draw[thick] (G0) -- (G1);
\draw[thick] (G0) -- (G2);
\draw[thick] (G0) -- (G3);
\draw[thick] (G0) -- (G4);
\draw[thick] (G0) -- (G5);
\draw[thick] (G0) -- (G6);
\draw[thick] (G0) -- (G7);
\draw[thick] (G0) -- (G8);
\end{tikzpicture}
\end{center}

For a partition $\lambda = (\lambda_1, \ldots, \lambda_l)$, we denote by $St_\lambda=St_{\lambda_1}\sqcup\cdots \sqcup St_{\lambda_l}$. 
For a positive integer $k$, we define
$\st_k := X_{St_k}$ and for a partition $\lambda= (\lambda_1, \ldots, \lambda_k)$ we have $\st_\lambda := \st_{\lambda_1} \st_{\lambda_2}\cdots \st_{\lambda_l}$.  By Theorem~\ref{thm:cho}, $\{\st_{\lambda}\mid \lambda \vdash n\}$ is a basis for $\mathsf{Sym}_n$. The following result gives the change of basis between the power sum and the star chromatic basis.  
\begin{prop}
\label{involution}
We have 
\begin{equation}
    \label{eq:star_p_exp}
\st_{n+1} = \sum_{r=0}^n(-1)^r\binom{n}{r}p_{(r+1,1^{n-r})}
\end{equation}
and 
\begin{equation}
    \label{eq:p_star_exp}
{p}_{n+1} = \sum_{r=0}^n(-1)^r\binom{n}{r}\st_{(r+1,1^{n-r})}.
\end{equation}
In other words, there is a linear involution that sends $\st_\lambda$ into $p_\lambda$ and vice versa. 
\end{prop}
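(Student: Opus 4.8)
The plan is to establish \eqref{eq:star_p_exp} directly from Theorem~\ref{stanley1} applied to a star graph, then obtain \eqref{eq:p_star_exp} by binomial inversion, and finally package both identities into the involution statement via an explicit algebra endomorphism of $\mathsf{Sym}$.

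First I would compute $\st_{n+1}=X_{St_{n+1}}$ using Theorem~\ref{stanley1}. Write $St_{n+1}$ with center $c$, leaves $v_1,\dots,v_n$, and edges $e_i=cv_i$, so that $E=\{e_1,\dots,e_n\}$. For $A\subseteq E$ with $|A|=r$, the spanning subgraph $St_{n+1}|_A$ has one component consisting of $c$ together with the $r$ leaves joined to it by edges of $A$, hence of order $r+1$, and $n-r$ isolated vertices; thus $\lambda(A)=(r+1,1^{n-r})$. Since there are $\binom nr$ subsets $A$ of size $r$, Theorem~\ref{stanley1} gives
\[
\st_{n+1}=\sum_{A\subseteq E}(-1)^{|A|}p_{\lambda(A)}=\sum_{r=0}^{n}(-1)^r\binom nr p_{(r+1,1^{n-r})},
\]
which is \eqref{eq:star_p_exp} (the case $n=0$ recovers $\st_1=X_{St_1}=p_1$).

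Next, since both $\{p_\lambda\}$ and $\{\st_\lambda\}$ are multiplicative, $p_{(r+1,1^{n-r})}=p_{r+1}p_1^{n-r}$ and $\st_{(r+1,1^{n-r})}=\st_{r+1}\st_1^{n-r}$. Working in $\mathsf{Sym}[[t]]$, set $P(t)=\sum_{n\ge0}p_{n+1}t^n/n!$ and $S(t)=\sum_{n\ge0}\st_{n+1}t^n/n!$. Comparing coefficients of $t^n/n!$, identity \eqref{eq:star_p_exp} is exactly the statement $S(t)=e^{p_1t}\,P(-t)$, the right side being the product of the exponential generating series of $(p_1^{\,j})_j$ and of $((-1)^m p_{m+1})_m$. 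Substituting $t\mapsto -t$ and solving for $P$ yields the symmetric relation $P(t)=e^{p_1t}\,S(-t)$; extracting the coefficient of $t^n/n!$ gives $p_{n+1}=\sum_{r=0}^{n}(-1)^r\binom nr \st_{r+1}p_1^{n-r}=\sum_{r=0}^n(-1)^r\binom nr\st_{(r+1,1^{n-r})}$, which is \eqref{eq:p_star_exp}. (Equivalently, one may invoke the standard twisted binomial inversion.)

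Finally, for the involution, let $\Phi\colon\mathsf{Sym}\to\mathsf{Sym}$ be the linear map with $\Phi(\st_\lambda)=p_\lambda$ for every partition $\lambda$; this is well defined since $\{\st_\lambda\}$ is a basis (Theorem~\ref{thm:cho}). Because $\st_\lambda\st_\mu=\st_{\lambda\cup\mu}$ and $p_\lambda p_\mu=p_{\lambda\cup\mu}$, where $\lambda\cup\mu$ denotes the partition whose parts are the parts of $\lambda$ and $\mu$ together, $\Phi$ respects products and hence is an algebra homomorphism with $\Phi(\st_k)=p_k$ for all $k\ge1$. Applying $\Phi$ to \eqref{eq:p_star_exp} and using $\Phi(\st_{r+1}\st_1^{n-r})=p_{r+1}p_1^{n-r}=p_{(r+1,1^{n-r})}$ together with \eqref{eq:star_p_exp} gives $\Phi(p_{n+1})=\st_{n+1}$; by multiplicativity $\Phi(p_\lambda)=\prod_i\st_{\lambda_i}=\st_\lambda$. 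Therefore $\Phi^2(\st_\lambda)=\Phi(p_\lambda)=\st_\lambda$ for all $\lambda$, so $\Phi$ is a linear involution interchanging $\st_\lambda$ and $p_\lambda$. I expect no serious obstacle here; the only points requiring care are keeping the signs straight in the inversion (equivalently, in the substitution $t\mapsto -t$) and checking that $\Phi$ is genuinely well defined and multiplicative before invoking it.
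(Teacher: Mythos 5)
Your proof is correct, but it takes a genuinely different route from the paper: the paper does not prove the proposition at all, deriving it instead as a consequence of the chromatic reciprocity theorem of Aliniaeifard, Wang and van Willigenburg \cite{aliniaeifard2020extended}, whereas you give a short self-contained argument. Your derivation of \eqref{eq:star_p_exp} from Theorem~\ref{stanley1} applied to $St_{n+1}$ is exactly right (every $r$-subset of edges yields $\lambda(A)=(r+1,1^{n-r})$), and the passage to \eqref{eq:p_star_exp} via the identity $S(t)=e^{p_1t}P(-t)$ is a clean way to see that the twisted binomial transform is self-inverse; the one small point your argument leans on, namely $\st_1=p_1$ so that $p_1^{n-r}=\st_1^{n-r}$, is correctly noted. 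The construction of the involution $\Phi$ as the algebra endomorphism determined by $\Phi(\st_k)=p_k$, with $\Phi(p_\lambda)=\st_\lambda$ then forced by \eqref{eq:star_p_exp} and \eqref{eq:p_star_exp} and multiplicativity, is also sound. What your approach buys is self-containedness from first principles already available in Section~2 of the paper; what the paper's citation buys is brevity and a pointer to the more general reciprocity phenomenon of which this involution is a special case.
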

This proposition is a consequence of the \emph{chromatic reciprocity theorem} recently discovered 
by Aliniaeifard, Wang  and van Willigenburg \cite{aliniaeifard2020extended}. By combining this proposition (more precisely, \eqref{eq:p_star_exp})  with Theorem~\ref{stanley1} we obtain a formula for computing the $\st$-expansion of $X_{G}$ for every graph $G$. However, as seen in Example~\ref{ex:1}, many terms may cancel out. Hence, if we want to compute the $\st$-expansion of $X_G$, we need a better algorithm. This algorithm will be described in Section~\ref{sec:algo}.
\begin{example}
\label{ex:1}
If $G$ is the star graph with $n+1$ vertices, then Theorem~\ref{stanley1} combined with \eqref{eq:p_star_exp} yields
\[
X_G = \sum_{r=0}^n\sum_{k=0}^r (-1)^{r+k}\binom{n}{r}\binom{r}{k}\st_{(k+1,1^{n-k})}.
\]
Since $X_G=\st_{n+1}$, we see that in the right-hand side of the last equation all terms but one must cancel out. 

\end{example}

\subsection{Weighted graphs, their graph polynomials and symmetric functions}
\label{sec:Vpoly}

Given a graph $G = (V,E)$,  a \defn{weight function} is a map $\omega:V\rightarrow \PP$. A \defn{weighted graph} is a pair $(G,\omega)$, where $G$ is a graph and $\omega$ is a weight function. Given two weighted graphs $(G,\omega)$ and $(H,\omega')$, we say that they  are \defn{$\omega$-isomorphic} if there is an isomorphism $\phi$ from $G$ to $H$ such that \[\omega'(\phi(v))=\omega(v)\quad\text{for all $v$ in $V(G)$}.\]
Let $(G,\omega)$ be a weighted graph. The \defn{simplification of} $(G,\omega)$ is defined as $(G,\omega)^s = (G^s,\omega)$. Given an edge $e=v_1v_2$ of $G$, then $(G,\omega)\setminus e := (G\setminus e,\omega)$ is the weighted graph obtained by deleting $e$ and leaving $\omega$ unchanged.  The contraction is $(G/e,\omega/e)$, where $G/e$ is the graph obtained by contracting $e$ into the {contracted vertex} $v_e$ and the weight function  $\omega/e$ is defined by setting
\[(\omega/e) (v)=\begin{cases}\omega(v_1)+ \omega(v_2),&\text{if } v = v_e,\\
  \omega(v),& \text{ otherwise.}
  \end{cases}
  \]

\begin{center}
\begin{tikzpicture}[scale = 0.5,thick, baseline={(0,-1ex/2)}]
\tikzstyle{vertex} = [black, shape = circle, fill=black, minimum size = 3.5pt, inner sep = 1pt]
\node[vertex=left 1] (G7) at (12.0, -2) [shape = circle, draw] {};
\node[] at (12,-3) {$4$};
\node[vertex] (G6) at (10.0, 0) [shape = circle, draw] {};
\node[] at (10,-1) {$3$};
\node[vertex] (G5) at (12.0, 2) [shape = circle, draw] {};
\node[] at (12,3) {$2$};
\node[vertex] (G4) at (14.0, 2) [shape = circle, draw] {};
\node[] at (14,3) {$1$};
\node[vertex] (G3) at (16.0, 1.5) [shape = circle, draw] {};
\node[] at (16,2.5) {$4$};
\node[vertex] (G2) at (14.0, -2) [shape = circle, draw] {};
\node[] at (14,-3) {$3$};
\node[vertex] (G1) at (16.0, -1.5) [shape = circle, draw] {};
\node[] at (16,-2.5) {$5$};
\draw[thick] (G7)--(G6);
\draw[thick] (G7) -- (G2);
\draw[thick] (G5) -- (G4);
\draw[thick] (G4) -- (G3);
\draw[thick] (G3) -- (G1);
\draw[thick] (G1) -- (G2);
\draw[thick] (G7) -- (G5) node[midway,left] {$e$};
\draw[thick] (G4) -- (G2);
\end{tikzpicture}\qquad  \quad \quad  $\longrightarrow$ \qquad \quad \quad   
\begin{tikzpicture}[scale = 0.5,thick, baseline={(0,-1ex/2)}]
\tikzstyle{vertex} = [black, shape = circle, fill=black, minimum size = 3.5pt, inner sep = 1pt]
\node[vertex] (G6) at (10.0, 0) [shape = circle, draw] {};
\node[] at (10,-1) {$3$};
\node[vertex] (G5) at (12.0, 0) [shape = circle, draw] {};
\node[] at (12,-1) {$6$};
\node[vertex] (G4) at (14.0, 2) [shape = circle, draw] {};
\node[] at (14,3) {$1$};
\node[vertex] (G3) at (16.0, 1.5) [shape = circle, draw] {};
\node[] at (16,2.5) {$4$};
\node[vertex] (G2) at (14.0, -2) [shape = circle, draw] {};
\node[] at (14,-3) {$3$};
\node[vertex] (G1) at (16.0, -1.5) [shape = circle, draw] {};
\node[] at (16,-2.5) {$5$};
\draw[thick] (G5)  -- (G6);
\draw[thick] (G2)  -- (G5);
\draw[thick] (G4) -- (G2);
\draw[thick] (G5) -- (G4);
\draw[thick] (G4) -- (G3);
\draw[thick] (G3) -- (G1);
\draw[thick] (G1) -- (G2);
\end{tikzpicture}
\end{center}

In \cite{crew2020deletion}, Crew and Spirkl extended the definition of the chromatic symmetric function to weighted graphs.  The \defn{weighted chromatic symmetric function} of a weighted graph $(G,\omega)$ 
is defined as   
\begin{equation}\label{eq:weightedchromatic}
X_{(G,\omega)}= \sum_{\kappa} x_{\kappa(v_1)}^{\omega(v_1)}x_{\kappa(v_2)}^{\omega(v_2)}\cdots x_{\kappa(v_l)}^{\omega(v_l)},
\end{equation}
where the sum ranges over all proper colorings $\kappa$ of the vertices of $G$.  If $(G,\omega)$ is \defn{unweighted}, that is, if $\omega(v)=1$ for every vertex, then we have that $X_{G}=X_{(G,\omega)}$.  If $e$ is an edge of $G$, then the weighted chromatic symmetric function satisfies the  deletion-contraction formula: 
\begin{equation}
\label{w_delcont}
X_{(G,\omega)} = X_{(G\setminus e,\omega)}-X_{(G/e,\omega/e)}.
\end{equation}

{As in the case of the chromatic symmetric function, if $(G,\omega)$ has a loop, then $X_{(G,\omega)}$ is zero. It is also easy to note that if $G$ has no loops, then 
\[ X_{(G,\omega)} = X_{(G^s,\omega)}.\]
This can bee seen from \eqref{eq:weightedchromatic} and also from \eqref{w_delcont}. Indeed, if $e$ and $e'$ are two parallel edges, then $G/e$ contains a loop and thus $X_{(G/e,\omega/e)}=0$. Therefore, after applying deletion-contraction to $e$ we get $X_{(G,\omega)}=X_{(G\setminus e,\omega)}$.  It follows that if $(G,\omega)$ is simple, then 
\[X_{(G,\omega)} = X_{G\setminus e,\omega} - X_{((G/e)^s,\omega/e)}.\]}

\begin{example} In this example we have a weighted graph $(G,\omega)$ such that contracting the edge $e$ produces a graph with parallel edges.  On the right, we show the simple weighted graph $((G/e)^s,\omega/e)$ which results after removing the parallel edges.
\begin{center}
\begin{tikzpicture}[scale = 0.4,thick, baseline={(0,-1ex/2)}]
\tikzstyle{vertex} = [black, shape = circle, fill=black, minimum size =3.5pt, inner sep = 1pt]
\node[vertex] (G7) at (12.0, -2) [shape = circle, draw] {};
\node[] at (12,-3) {$3$};
\node[vertex] (G6) at (10.0, 0) [shape = circle, draw] {};
\node[] at (10,-1) {$2$};
\node[vertex] (G5) at (12.0, 2) [shape = circle, draw] {};
\node[] at (12,3) {$1$};
\node[vertex] (G4) at (14.0, 2) [shape = circle, draw] {};
\node[] at (14,3) {$3$};
\node[vertex] (G3) at (16.0, 1.5) [shape = circle, draw] {};
\node[] at (16,2.5) {$2$};
\node[vertex] (G2) at (14.0, -2) [shape = circle, draw] {};
\node[] at (14,-3) {$2$};
\node[vertex] (G1) at (16.0, -1.5) [shape = circle, draw] {};
\node[] at (16,-2.5) {$4$};
\draw[thick] (G7)--(G6);
\draw[thick] (G7) -- (G2);
\draw[thick] (G6) -- (G5);
\draw[thick] (G5) -- (G4);
\draw[thick] (G4) -- (G3);
\draw[thick] (G3) -- (G1);
\draw[thick] (G1) -- (G2);
\draw[thick] (G7) -- (G5) node[midway,left] {$e$};
\draw[thick] (G5) -- (G2);
\draw[thick] (G4) -- (G2);
\end{tikzpicture}\qquad  \quad \quad $\longrightarrow$ \qquad \quad \quad
\begin{tikzpicture}[scale = 0.4,thick, baseline={(0,-1ex/2)}]
\tikzstyle{vertex} = [black, shape = circle, fill=black, minimum size = 3.5pt, inner sep = 1pt]
\node[vertex] (G6) at (10.0, 0) [shape = circle, draw] {};
\node[] at (10,-1) {$2$};
\node[vertex] (G5) at (12.0, 0) [shape = circle, draw] {};
\node[] at (12,-1) {$4$};
\node[vertex] (G4) at (14.0, 2) [shape = circle, draw] {};
\node[] at (14,3) {$3$};
\node[vertex] (G3) at (16.0, 1.5) [shape = circle, draw] {};
\node[] at (16,2.5) {$2$};
\node[vertex] (G2) at (14.0, -2) [shape = circle, draw] {};
\node[] at (14,-3) {$2$};
\node[vertex] (G1) at (16.0, -1.5) [shape = circle, draw] {};
\node[] at (16,-2.5) {$4$};
\draw[thick] (G5)  -- (G6);
\draw[thick] (G2)  -- (G5);
\draw[thick] (G4) -- (G2);
\draw[thick] (G5) -- (G4);
\draw[thick] (G4) -- (G3);
\draw[thick] (G3) -- (G1);
\draw[thick] (G1) -- (G2);
\end{tikzpicture}
\end{center}
\end{example}

Let $\mathbf{z}=z_1,z_2,\ldots$ be a family of commuting indeterminates and $y$ another indeterminate that commutes with all the $z_i$. The \defn{$W$-polynomial} introduced by Noble and Welsh in  \cite{noble99weighted}, is defined for every  weighted graph $(G,\omega)$  by the following rules: 
\begin{enumerate}[a)]
\item If $e$ is a loop in $G$, then 
\[ W_{(G,\omega)}(\mathbf z,y) = y W_{(G{\setminus e},\omega)}(\mathbf z,y).\]
\item If $e$ is an edge with different endpoints, then $W_G$ satisfies the \emph{deletion-contraction} formula: 
\begin{equation*}
W_{(G,\omega)}(\mathbf z,y) = W_{(G{\setminus e},\omega)}(\mathbf z,y) + W_{(G{/e},\omega/e)}(\mathbf z,y).
\end{equation*}
\item If $G$ consists of $k$ isolated vertices with corresponding weights $\omega_1$,$\omega_2$,\ldots,$\omega_k$, 
then 
\[W_{(G,\omega)}(\mathbf z,y) = z_{\omega_1}z_{\omega_2}\cdots z_{\omega_k}.\]
\end{enumerate}

These rules yield a well-defined polynomial \cite[Proposition 4.1]{noble99weighted} in the sense that the order in which we apply rules a) and b) does not affect its value. 

We need some basic definitions before stating the next theorem. Let $(G,\omega)$ be a weighted graph. Given a subset $U$ of vertices of $G$, the \defn{weight} of $U$, denoted by $\omega(U)$, is the sum of the weights of all the vertices in $U$. The \defn{total weight} of $(G,\omega)$, denoted $\omega(G)$, is the weight of $V(G)$. The \emph{number of connected components} of any graph $H$ is denoted by $k(H)$. For any subset $A$ of edges, the \defn{rank} of $A$, denoted by $r_G(A)$, is defined as 
\begin{equation}
    \label{rankDef}
r_G(A) = |V(G)| - k(G|_A).
\end{equation}
The \emph{partition} induced by $A$ in $(G,\omega)$, denoted by $\lambda(G,\omega,A)$, is the integer partition  determined by the total weights of the connected components of $(G|_A,\omega)$. If $\lambda$ is an integer partition
of length  $l$,  then $\mathbf{z}_\lambda :=  z_{\lambda_1}z_{\lambda_2}\cdots z_{\lambda_l}$.

\begin{theorem}[{\cite[Theorem 4.3]{noble99weighted}}]
\label{teoUtoX}
We have 
\begin{equation}
\label{Wstates}
W_{(G,\omega)}({\mathbf{z}},y) = 
\sum_{A\subseteq E(G)}
\mathbf{z}_{\lambda(G,\omega,A)}(y-1)^{|A|-r_G(A)}.
\end{equation}
\end{theorem}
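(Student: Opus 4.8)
The plan is to show that the right-hand side of \eqref{Wstates}, which I will denote
\[
F_{(G,\omega)}(\mathbf{z},y):=\sum_{A\subseteq E(G)}\mathbf{z}_{\lambda(G,\omega,A)}(y-1)^{|A|-r_G(A)},
\]
satisfies the three rules a), b), c) that define the $W$-polynomial, and then to invoke \cite[Proposition 4.1]{noble99weighted} together with an induction on $|E(G)|$ to conclude $F=W$.

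First I would dispatch the base case, rule c). If $G$ consists of $k$ isolated vertices with weights $\omega_1,\dots,\omega_k$, then $A=\emptyset$ is the only subset, $r_G(\emptyset)=|V(G)|-k(G|_\emptyset)=0$, and each vertex is its own connected component, so $\lambda(G,\omega,\emptyset)$ has parts $\omega_1,\dots,\omega_k$ and $F_{(G,\omega)}=z_{\omega_1}\cdots z_{\omega_k}$. Next, the loop rule a). If $e$ is a loop of $G$, I split the sum defining $F_{(G,\omega)}$ according to whether $e\in A$. Since deleting a loop changes neither the connected components of any spanning subgraph nor its rank, the terms with $e\notin A$ reproduce $F_{(G\setminus e,\omega)}$; for $A=A'\cup\{e\}$ with $A'\subseteq E(G\setminus e)$ the partition and the rank are unchanged but $|A|$ grows by one, so the remaining terms contribute $(y-1)F_{(G\setminus e,\omega)}$, giving $F_{(G,\omega)}=yF_{(G\setminus e,\omega)}$.

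The heart of the argument is rule b). Let $e=v_1v_2$ be a non-loop edge and again split over whether $e\in A$; the terms with $e\notin A$ give $F_{(G\setminus e,\omega)}$ exactly as before. For the terms with $e\in A$ I use the natural bijection $A'\leftrightarrow\bar A'$ between subsets of $E(G)\setminus\{e\}$ and subsets of $E(G/e)$ obtained from the edge correspondence of the contraction, and I need two identities for $A=A'\cup\{e\}$: first, $\lambda(G,\omega,A)=\lambda(G/e,\omega/e,\bar A')$, because in $G|_A$ the vertices $v_1,v_2$ already lie in a common component, so contracting $e$ merges no new components and only replaces the weights $\omega(v_1),\omega(v_2)$ by $(\omega/e)(v_e)=\omega(v_1)+\omega(v_2)$, leaving the multiset of component weights unchanged; second, $|A|-r_G(A)=|\bar A'|-r_{G/e}(\bar A')$, because $|\bar A'|=|A|-1$ and, since contracting a non-loop edge of $G|_A$ removes one vertex while keeping the number of connected components, $r_{G/e}(\bar A')=(|V(G)|-1)-k(G|_A)=r_G(A)-1$. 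Summing, the $e\in A$ terms total $F_{(G/e,\omega/e)}$, so $F_{(G,\omega)}=F_{(G\setminus e,\omega)}+F_{(G/e,\omega/e)}$; note that this works even when $G/e$ has loops, which are then absorbed by rule a).

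Finally, because $F$ obeys a), b), c), an induction on the number of edges finishes the proof: the edgeless case is rule c), and in the inductive step one removes a loop via a) or a non-loop edge via b), in both cases reducing to graphs with strictly fewer edges where $F$ and $W$ agree by hypothesis. I expect the only subtle point to be identity (ii) in rule b) — one must argue carefully, from the definition $r_G(A)=|V(G)|-k(G|_A)$, that contraction of a non-loop edge of a spanning subgraph leaves the number of connected components unchanged while decreasing the vertex count by one, so that the exponents of $(y-1)$ on the two sides of the bijection really do match.
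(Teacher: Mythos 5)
Your proposal is correct: verifying that the subset expansion satisfies the three defining rules (base case, loop rule, deletion--contraction) and concluding by induction on the number of edges is exactly the standard argument, and all the key identities you flag --- in particular that for $e\in A$ one has $\lambda(G,\omega,A)=\lambda(G/e,\omega/e,\bar A')$ and $r_{G/e}(\bar A')=r_G(A)-1$, so the exponent $|A|-r_G(A)$ is preserved --- check out. The paper itself gives no proof, citing Noble and Welsh, and your argument is essentially theirs.
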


It is known that if $G$ is a forest, then $|A|=r_G(A)$ for all $A\subseteq E(G)$. Thus the right hand side of \eqref{Wstates} does not depend on $y$. It follows in this case that given  $\lambda\vdash N$, where $N$ is the total weight of $(G,\omega)$, the coefficient of $\mathbf{z}_\lambda$ in $W_{(G,\omega)}(\mathbf{z})$ counts the number of edge sets $A$ such that $\lambda(G,\omega,A)=\lambda$.

Observe that the expansion of the chromatic symmetric functions in terms of the power sum basis, Theorem~\ref{stanley1}, is very similar to the expansion given by Theorem~\ref{teoUtoX}. In fact, the chromatic symmetric function of a graph $G$ is a specialization of the $W$-polynomial of the weighted graph assigning weight $1$ to every vertex \cite[Theorem 6.1]{noble99weighted} where we substitute each variable $z_i$ by $-p_i$ and let $y=0$. Actually this also holds for the weighted chromatic symmetric function: The power sum expansion of $X_{(G,\omega)}$ is given in \cite[Lemma 3]{crew2020deletion}. 
Thus, by performing the same substitution of variables in the $W$-polynomial for weighted graphs yields 
\begin{theorem}
\label{thm:noble1}
Let $(G,\omega)$ be a weighted graph. Then, 
\[X_{(G,\omega)} = (-1)^{|V(G)|}W_{(G,\omega)}(z_i = -p_i, y=0).\]
\end{theorem}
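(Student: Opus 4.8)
The plan is to evaluate the state-sum formula of Theorem~\ref{teoUtoX} at $z_i=-p_i$, $y=0$, and then compare the result with the power sum expansion of $X_{(G,\omega)}$. Starting from
\[
W_{(G,\omega)}(\mathbf z,y)=\sum_{A\subseteq E(G)}\mathbf z_{\lambda(G,\omega,A)}\,(y-1)^{|A|-r_G(A)},
\]
the substitution $y=0$ replaces $(y-1)^{|A|-r_G(A)}$ by $(-1)^{|A|-r_G(A)}$, and the substitution $z_i=-p_i$ replaces $\mathbf z_{\lambda(G,\omega,A)}$ by $(-1)^{k(G|_A)}\,p_{\lambda(G,\omega,A)}$, since $\lambda(G,\omega,A)$ has exactly one part for each connected component of $G|_A$, that is, $k(G|_A)$ parts in total.

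Next I would do the sign bookkeeping. From \eqref{rankDef} we have $r_G(A)=|V(G)|-k(G|_A)$, hence $|A|-r_G(A)=|A|-|V(G)|+k(G|_A)$, and therefore the combined sign attached to the $A$-term is
\[
(-1)^{k(G|_A)}\cdot(-1)^{|A|-|V(G)|+k(G|_A)}=(-1)^{|A|}\,(-1)^{|V(G)|},
\]
the two factors $(-1)^{k(G|_A)}$ cancelling. Summing over $A\subseteq E(G)$ gives
\[
W_{(G,\omega)}(z_i=-p_i,\,y=0)=(-1)^{|V(G)|}\sum_{A\subseteq E(G)}(-1)^{|A|}\,p_{\lambda(G,\omega,A)},
\]
so that $(-1)^{|V(G)|}W_{(G,\omega)}(z_i=-p_i,y=0)=\sum_{A\subseteq E(G)}(-1)^{|A|}p_{\lambda(G,\omega,A)}$.

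It then remains to identify this last sum with $X_{(G,\omega)}$, i.e.\ to establish the weighted analogue of Stanley's Theorem~\ref{stanley1}: $X_{(G,\omega)}=\sum_{A\subseteq E(G)}(-1)^{|A|}p_{\lambda(G,\omega,A)}$. This is \cite[Lemma~3]{crew2020deletion}, and it also follows from the inclusion--exclusion argument of \cite{stanley95symmetric}: expanding each $p_{\lambda(G,\omega,A)}$ as the sum of $\prod_v x_{\kappa(v)}^{\omega(v)}$ over the colorings $\kappa$ that are constant on every connected component of $G|_A$ and then exchanging the order of summation, the coefficient of $\prod_v x_{\kappa(v)}^{\omega(v)}$ becomes $\sum_{A\subseteq B(\kappa)}(-1)^{|A|}$, where $B(\kappa)=\{\,uv\in E(G):\kappa(u)=\kappa(v)\,\}$; this vanishes unless $B(\kappa)=\emptyset$, i.e.\ unless $\kappa$ is proper, leaving exactly $X_{(G,\omega)}$.

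I do not anticipate a real obstacle here; the only care needed is in the sign computation above and in noting that $z_i=-p_i$ is a legitimate evaluation because $W_{(G,\omega)}$ is an honest polynomial in the $z_i$ and $y$ by \cite[Proposition~4.1]{noble99weighted}. A fully self-contained alternative avoids Theorem~\ref{teoUtoX} and proceeds by induction on $|E(G)|$: writing $\widetilde W_{(G,\omega)}:=(-1)^{|V(G)|}W_{(G,\omega)}(z_i=-p_i,y=0)$, one checks that $\widetilde W_{(G,\omega)}=0$ when $G$ has a loop (rule a) at $y=0$), that $\widetilde W_{(G,\omega)}=p_{\omega(v_1)}\cdots p_{\omega(v_k)}=X_{(G,\omega)}$ when $G$ is edgeless (rule c) together with the evaluation $\sum_j x_j^{\omega(v)}=p_{\omega(v)}$), and that for a non-loop edge $e$ one has $\widetilde W_{(G,\omega)}=\widetilde W_{(G\setminus e,\omega)}-\widetilde W_{(G/e,\omega/e)}$, the minus sign coming from $|V(G/e)|=|V(G)|-1$; since $X_{(G,\omega)}$ satisfies the identical recursion \eqref{w_delcont} and the same base cases, the two invariants coincide.
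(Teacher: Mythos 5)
Your proposal is correct and follows essentially the same route the paper intends: evaluating the states model of Theorem~\ref{teoUtoX} at $z_i=-p_i$, $y=0$ and matching the result against the power sum expansion of $X_{(G,\omega)}$ from \cite[Lemma 3]{crew2020deletion}, with the sign bookkeeping via $r_G(A)=|V(G)|-k(G|_A)$ carried out correctly. The self-contained deletion--contraction induction you sketch at the end is a valid alternative but not needed; the main argument already does the job.
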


\subsection{Weighted graphs with weights in semigroups}

From the proof of \cite[Proposition 4.1]{noble99weighted} one can generalize the definition of the $W$-polynomial to weighted graphs with weights belonging to any commutative semigroup. In fact, Ellis-Monaghan and Moffatt \cite{ellis2011tutte} went further and introduced the $\V$-polynomial for weighted graphs also having weights in the edges. We review this polynomial now.

Fix a commutative semigroup $(S,\star)$. 
An $S$-weighted graph is a pair $(G,\mathsf{s})$ where $G$ is a graph and $\mathsf{s}:V\rightarrow S$ is a function. Given an edge $e=uv$ of $G$, the deletion and contraction operations extend naturally to $(G,\mathsf{s})$:
\begin{itemize}
\item {(deletion)} The $S$-weighted graph $(G\setminus e,\mathsf{s})$ is obtained by deleting the edge $e$ from $G$ and leaving $\mathsf{s}$ unchanged.
\item (contraction) If $e$ is not a loop, we consider $(G{/ e},\mathsf{s}/e)$, where $G/e$ is the contraction of $G$ by $e$ into the \emph{contracted vertex} $v_e$, and $\mathsf{s}/e$ is defined as 
\[\mathsf{s}/ e(u')=\begin{cases}\mathsf{s}(u)\star \mathsf{s}(v),&\text{if }u'=v_e,\\
										\mathsf{s}(u'),&\text{otherwise.}
\end{cases}
\]
\end{itemize}

Given an $S$-weighted graph $(G,\mathsf{s})$, edge weights $\pmb{\gamma}=\{\gamma_e\}_{e\in E}$, and a family of commuting indeterminates $\mathbf{z}=\{z_{s}\}_{s\in S}$, the \defn{$\V$-polynomial} of  $(G,\mathsf{s})$  is defined by the following rules:

\begin{enumerate}[a)]
\item If $(G,\mathsf{s})$ consists only of $k$ isolated vertices with weights  $s_1,s_2,\ldots,s_k$ in $S$, 
then 
\[\V_{(G,\mathsf{s})}(\mathbf{z},\pmb{\gamma}) = z_{s_1}z_{s_2}\cdots z_{s_k}.\]
\item If $e$ is a loop of $G$, then 
\[ \V_{(G,\mathsf{s})}(\mathbf z,\pmb{\gamma}) = (\gamma_e+1) \V_{(G\setminus e,\mathsf{s})}(\mathbf z,\pmb{\gamma}).\]
\item If $e$ is a non-loop edge of $G$, then 
\begin{equation}
\label{especial}
\V_{(G,\mathsf{s})}(\mathbf z,\pmb{\gamma}) = \V_{(G\setminus e,\mathbf{s})}(\mathbf z,\pmb{\gamma}) + \gamma_e \V_{(G/e,\mathsf{s}/e)}(\mathbf z,\pmb{\gamma}).
\end{equation}
\end{enumerate}
It is proved in \cite[Proposition 3.2]{ellis2011tutte} that the $\V$-polynomial is well-defined. Now we recall the main results from \cite{ellis2011tutte} that are needed in the sequel.

\begin{theorem}[{\cite[Theorem 3.4]{ellis2011tutte}}]
\label{recipeV}
Let $f$ be a complex valued function on $S$-weighted graphs defined recursively by the following rules: for $\alpha_e,\beta_e\in \mathbb{C}$ and each $\alpha_e\neq 0$
\begin{enumerate}
    \item if $e$ is a non-loop edge, then
    \[f(G,\mathsf{s}) = \alpha_e f(G{\setminus e},\mathsf{s}) + \beta_e f(G{/e},\mathsf{s}/e);\]
    \item if $e$ is a loop, then
    \[f(G,\mathsf{s}) = (\alpha_e+\beta_e)f(G{ \setminus e},\mathsf{s});\]
    \item if $(G,\mathsf{s})$ consists of isolated vertices with weights $s_1,\ldots,s_m$,
    \[f(G,\mathsf{s}) = \prod_{i=1}^m z_{s_i}.\]
\end{enumerate}
Then $f$ is well-defined and 
\[f(G,\mathsf{s}) = \left(\prod_{e\in E}\alpha_e\right) \V_{(G,\mathsf{s})}(\mathbf{z},\{\gamma_e = \beta_e/\alpha_e\}_{e\in E}).\]
\end{theorem}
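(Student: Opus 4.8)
The plan is to deduce both claims at once from the already-established fact that the $\V$-polynomial is well-defined. Define the candidate value
\[
g(G,\mathsf{s}) \;:=\; \Bigl(\prod_{e\in E(G)}\alpha_e\Bigr)\,\V_{(G,\mathsf{s})}\!\left(\mathbf{z},\{\gamma_e=\beta_e/\alpha_e\}_{e\in E(G)}\right),
\]
which makes sense precisely because every $\alpha_e\neq 0$. I would then proceed in two steps: first verify that $g$ satisfies the three recursive rules $(1)$--$(3)$ of the statement, and then show by induction on the number of edges that every evaluation of those rules starting from $(G,\mathsf{s})$ returns $g(G,\mathsf{s})$. The second step simultaneously establishes that $f$ is well-defined (independent of the order of edge choices) and that $f=g$, which is the desired formula.

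For the first step I would simply translate the defining rules of the $\V$-polynomial. When $(G,\mathsf{s})$ has no edges, rule a) for $\V$ gives $g(G,\mathsf{s})=\prod_i z_{s_i}$, matching rule $(3)$. When $e$ is a loop, rule b) gives $\V_{(G,\mathsf{s})}=(\gamma_e+1)\V_{(G\setminus e,\mathsf{s})}=\tfrac{\alpha_e+\beta_e}{\alpha_e}\V_{(G\setminus e,\mathsf{s})}$; since deleting $e$ removes exactly the factor $\alpha_e$ from $\prod_{e'\in E(G)}\alpha_{e'}$, the two occurrences of $\alpha_e$ cancel and we obtain $g(G,\mathsf{s})=(\alpha_e+\beta_e)\,g(G\setminus e,\mathsf{s})$, which is rule $(2)$. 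When $e$ is a non-loop edge, equation~\eqref{especial} gives $\V_{(G,\mathsf{s})}=\V_{(G\setminus e,\mathsf{s})}+\tfrac{\beta_e}{\alpha_e}\V_{(G/e,\mathsf{s}/e)}$; since both $G\setminus e$ and $G/e$ have labelled edge set $E(G)\setminus\{e\}$, multiplying through by $\prod_{e'\in E(G)}\alpha_{e'}$ yields exactly $g(G,\mathsf{s})=\alpha_e\,g(G\setminus e,\mathsf{s})+\beta_e\,g(G/e,\mathsf{s}/e)$, i.e.\ rule $(1)$.

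For the second step I would induct on $|E(G)|$. The base case $|E(G)|=0$ is rule $(3)$, which agrees with $g$ by rule a) for $\V$. If $|E(G)|\geq 1$, any evaluation begins by applying rule $(1)$ or rule $(2)$ to some edge $e$; in both cases the graphs that appear ($G\setminus e$, and also $G/e$ when $e$ is a non-loop) have strictly fewer edges, so by the inductive hypothesis the sub-evaluations return $g(G\setminus e,\mathsf{s})$ and, where relevant, $g(G/e,\mathsf{s}/e)$. Recombining via rule $(1)$ or $(2)$ and invoking the first step, the evaluation returns $g(G,\mathsf{s})$, regardless of which edge was chosen. I expect the only delicate point to be the bookkeeping in the first step: one must check that deletion and contraction each remove precisely the edge $e$ from the labelled edge set (so that $\prod_e\alpha_e$ loses exactly the factor $\alpha_e$), and that an edge turned into a loop by a contraction still carries its original labels $\alpha_e,\beta_e$ — this last point is exactly what makes the loop rules $(2)$ and b) compatible across contractions. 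The hypothesis $\alpha_e\neq 0$ enters only in forming the ratios $\gamma_e=\beta_e/\alpha_e$.
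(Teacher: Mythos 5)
The paper offers no proof of this statement: it is imported verbatim as \cite[Theorem 3.4]{ellis2011tutte}, so there is nothing internal to compare against. Judged on its own, your argument is correct and is the standard way to prove such a ``recipe theorem.'' You define $g(G,\mathsf{s})=\bigl(\prod_{e\in E}\alpha_e\bigr)\V_{(G,\mathsf{s})}(\mathbf{z},\{\gamma_e=\beta_e/\alpha_e\})$, check that $g$ satisfies rules $(1)$--$(3)$ by direct translation of the defining rules of $\V$ (the bookkeeping with the factor $\prod_e\alpha_e$ is right in all three cases, since deletion and contraction each remove exactly the edge $e$ from the labelled edge multiset), and then an induction on $|E(G)|$ shows that every evaluation of the recursion returns $g(G,\mathsf{s})$, which gives well-definedness of $f$ and the formula $f=g$ in one stroke. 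Two points worth making explicit: your argument leans on the prior fact that $\V$ itself is well-defined (so that $g$ is a legitimately defined quantity to compare against), which the paper does cite separately as \cite[Proposition 3.2]{ellis2011tutte}; and termination of the recursion is implicit in your induction because each application of rule $(1)$ or $(2)$ strictly decreases the edge count. With those noted, the proof is complete.
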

\begin{coro}
Suppose $S=\PP$, the positive integers, and let $(G,\omega)$ be a weighted graph. Then 
\[ W_{(G,\omega)}(\mathbf{z},y)=(y-1)^{-|V(G)|}\V_{(G,\omega)}((y-1)z_1,(y-1)z_2,\ldots,\{\gamma_e=(y-1)\}_{e\in E}).\]
\end{coro}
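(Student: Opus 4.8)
The plan is to apply the recipe theorem for the $\V$-polynomial (Theorem~\ref{recipeV}) to a rescaled version of the $W$-polynomial. Note first that the $W$-polynomial itself does \emph{not} satisfy the hypotheses of Theorem~\ref{recipeV}: its non-loop rule forces $\alpha_e=\beta_e=1$, which would make the loop factor $\alpha_e+\beta_e$ equal to $2$ rather than $y$. To correct this, define for every weighted graph $(G,\omega)$ (with $S=\PP$ and semigroup operation $+$, which matches the contraction rule $(\omega/e)(v_e)=\omega(u)+\omega(v)$ used in both polynomials)
\[ g(G,\omega):=(y-1)^{|V(G)|}\,W_{(G,\omega)}(\mathbf z,y). \]
Since $W$ is well-defined by \cite[Proposition 4.1]{noble99weighted}, so is $g$.

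Next I would verify that $g$ obeys the three recursive rules of Theorem~\ref{recipeV} with $\alpha_e=1$ and $\beta_e=y-1$ for every edge $e$, where the indeterminate attached to a weight $s\in\PP$ is taken to be $(y-1)z_s$, so that the isolated-vertex value is $\prod_i (y-1)z_{\omega_i}$. The boundary case is immediate from the isolated-vertices rule of the $W$-polynomial. For a loop $e$, deletion leaves the vertex set unchanged, so $g(G,\omega)=(y-1)^{|V(G)|}\,y\,W_{(G\setminus e,\omega)}=(\alpha_e+\beta_e)\,g(G\setminus e,\omega)$ because $\alpha_e+\beta_e=1+(y-1)=y$. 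For a non-loop edge $e$, deletion preserves $|V|$ while contraction decreases it by one, hence
\[ g(G,\omega)=(y-1)^{|V(G)|}\bigl(W_{(G\setminus e,\omega)}+W_{(G/e,\omega/e)}\bigr)=g(G\setminus e,\omega)+(y-1)\,g(G/e,\omega/e), \]
which is the non-loop rule of Theorem~\ref{recipeV} with $\alpha_e=1$, $\beta_e=y-1$. Theorem~\ref{recipeV} then yields $g(G,\omega)=\bigl(\prod_{e\in E}\alpha_e\bigr)\,\V_{(G,\omega)}\bigl((y-1)z_1,(y-1)z_2,\ldots,\{\gamma_e=\beta_e/\alpha_e\}_{e\in E}\bigr)$, and since $\beta_e/\alpha_e=y-1$ and $\prod_e\alpha_e=1$, dividing by $(y-1)^{|V(G)|}$ gives exactly the asserted identity.

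There is essentially no serious obstacle; the computation is a routine check. The two points that need a little care are the bookkeeping of the exponent of $(y-1)$ — it changes under contraction but not under deletion — and confirming that $\alpha_e=1\neq 0$ as required by Theorem~\ref{recipeV}. One should also remark that the displayed formula is to be read either in $\QQ(y)$, or as the polynomial identity $g(G,\omega)=\V_{(G,\omega)}(\dots)$ which, by the definition of $g$, shows a posteriori that the right-hand $\V$ is divisible by $(y-1)^{|V(G)|}$.
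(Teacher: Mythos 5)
Your proof is correct and takes essentially the approach the paper intends: the corollary is stated there without proof, but the paper carries out exactly this rescaling-plus-recipe argument (with $\alpha_e=1$, $\beta_e=y-1$ and the prefactor $(y-1)^{|V(G)|}$) in the proof of Proposition~\ref{prop:smr} for the $M$-polynomial. Your verification of the three rules of Theorem~\ref{recipeV}, including the observation that contraction drops the exponent of $(y-1)$ by one while deletion does not, matches that computation.
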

The $\V$-polynomial also admits a representation as a sum over spanning subgraphs. Given $\mathsf{s}$ in $S$, we say that a multiset  $\{\mathsf{s}_1,\mathsf{s}_2,\ldots,\mathsf{s}_l\}$ is a \defn{partition} of $\mathsf{s}$ if $\mathsf{s}_1\star\mathsf{s}_2\star\cdots\star\mathsf{s}_k=\mathsf{s}$. An \defn{$S$-partition} is a partition of $\mathsf{s}$ for some $\mathsf{s}$ in $S$.
If $\lambda=\{\mathsf{s}_1,\mathsf{s}_2,\ldots,\mathsf{s}_l\}$ is an $S$-partition,
then $\mathbf{z}_\lambda :=  z_{\mathsf{s}_1}z_{\mathsf{s}_2}\cdots z_{\mathsf{s}_l}$.
The $S$-\emph{partition} induced by $A$ in $(G,\mathsf{s})$, denoted by
$\lambda(G,\mathsf{s},A)$, is the $S$-partition  determined by the total weights of the connected components of the spanning subgraph $(G|_A,\mathsf{s})$.

\begin{theorem}[{\cite[Theorem 3.5]{ellis2011tutte}}]
\label{thm:Vspanning}
{The $\V$-polynomial can be represented as follows
\[ \V_{(G,\mathsf{s})}(\mathbf{z},{\pmb{\gamma}})=\sum_{A\subseteq E(G)}\mathbf{z}_{\lambda(G,\mathsf{s},A)}\prod_{e\in E(G)}\gamma_e
.\]
}\end{theorem}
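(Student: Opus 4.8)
The plan is to recognize the right-hand side --- read with the product taken over $e\in A$, i.e.\ as $\sum_{A\subseteq E(G)}\mathbf z_{\lambda(G,\mathsf s,A)}\prod_{e\in A}\gamma_e$ --- as a function of the $S$-weighted graph $(G,\mathsf s)$ that obeys the three rules a)--c) defining $\V$; since those rules determine a unique function by \cite[Proposition 3.2]{ellis2011tutte}, this forces equality with $\V_{(G,\mathsf s)}$. Write $F_{(G,\mathsf s)}$ for this sum. The base case is rule a): if $G$ is edgeless with vertex weights $s_1,\dots,s_k$, the only subset is $A=\emptyset$, for which $\lambda(G,\mathsf s,\emptyset)=\{s_1,\dots,s_k\}$ and the empty product equals $1$, so $F_{(G,\mathsf s)}=z_{s_1}\cdots z_{s_k}$, as required.

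For the recursive rules, fix an edge $e$ and split $F_{(G,\mathsf s)}$ according to whether $e\in A$; in the part with $e\in A$ write $A=A'\sqcup\{e\}$ with $A'\subseteq E(G)\setminus\{e\}$, and identify $E(G)\setminus\{e\}$ with $E(G/e)$ via the canonical correspondence built into the definition of contraction. Two observations do the work. First, for $A\not\ni e$ we have $G|_A=(G\setminus e)|_A$, hence $\lambda(G,\mathsf s,A)=\lambda(G\setminus e,\mathsf s,A)$, so the $e\notin A$ part of $F_{(G,\mathsf s)}$ equals $F_{(G\setminus e,\mathsf s)}$ term by term. Second --- the key combinatorial identity --- when $e$ is a non-loop edge, for every $A'\subseteq E(G)\setminus\{e\}$ one has
\begin{equation}\label{eq:planVkey}
\lambda\bigl(G,\mathsf s,A'\sqcup\{e\}\bigr)=\lambda\bigl(G/e,\mathsf s/e,A'\bigr),
\end{equation}
whereas if $e$ is a loop then adding or deleting $e$ changes no connected component, so $\lambda(G,\mathsf s,A'\sqcup\{e\})=\lambda(G\setminus e,\mathsf s,A')$. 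Granting these, the loop case yields $F_{(G,\mathsf s)}=F_{(G\setminus e,\mathsf s)}+\gamma_e F_{(G\setminus e,\mathsf s)}=(\gamma_e+1)F_{(G\setminus e,\mathsf s)}$, which is rule b), and the non-loop case yields $F_{(G,\mathsf s)}=F_{(G\setminus e,\mathsf s)}+\gamma_e F_{(G/e,\mathsf s/e)}$, which is rule c).

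The one step that genuinely needs care is \eqref{eq:planVkey}, and this is where I expect the main obstacle to lie. I would argue that the connected components of $G|_{A'\sqcup\{e\}}$ are in bijection with those of $(G/e)|_{A'}$: since $e$ joins its endpoints $u$ and $v$, these lie in a single component $C$ of $G|_{A'\sqcup\{e\}}$, which corresponds to the component $\overline C$ of $(G/e)|_{A'}$ containing the contracted vertex $v_e$, with $V(\overline C)=(V(C)\setminus\{u,v\})\cup\{v_e\}$, while every other component is literally unchanged. One then checks that the ``total weight'' of a component --- the $\star$-fold of the vertex weights over a finite set --- is a well-defined element of $S$ (this uses only that $(S,\star)$ is commutative and associative) and that it is preserved: for $\overline C$ the equality $\mathsf s/e(v_e)=\mathsf s(u)\star\mathsf s(v)$ exactly compensates for replacing $u$ and $v$ by $v_e$, and on every other component $\mathsf s/e=\mathsf s$. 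Hence the multisets of component weights of $G|_{A'\sqcup\{e\}}$ and $(G/e)|_{A'}$ coincide, which is \eqref{eq:planVkey}. Some bookkeeping around the multigraph conventions is also needed --- contracting $e$ can turn edges of $A'$ into loops or parallel edges, but by the definition of $G/e$ these remain honest edges, so the correspondence $E(G)\setminus\{e\}\leftrightarrow E(G/e)$ and the identification of $(G/e)|_{A'}$ with $(G|_{A'\sqcup\{e\}})/e$ are both literal. Once \eqref{eq:planVkey} is in place the three rules are confirmed and the theorem follows from the uniqueness asserted in \cite[Proposition 3.2]{ellis2011tutte}.
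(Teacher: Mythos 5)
The paper does not prove this statement; it is quoted verbatim from Ellis--Monaghan and Moffatt \cite[Theorem 3.5]{ellis2011tutte}, so there is no internal proof to compare against. Your argument is correct and is essentially the standard one (and the one in the cited source): verify that the state sum satisfies the three defining rules of $\V$ -- the base case on edgeless graphs, the loop rule via $(1+\gamma_e)$, and the non-loop rule via the component-weight-preserving bijection between $G|_{A'\sqcup\{e\}}$ and $(G/e)|_{A'}$ -- and invoke uniqueness from \cite[Proposition 3.2]{ellis2011tutte}. You are also right that the displayed formula contains a typo: the product must be taken over $e\in A$, not over $e\in E(G)$, and your reading is the intended one.
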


\section{On the $\st$-expansion of the chromatic symmetric function}
\label{sec:algo}
The main objective of this section is to describe an algorithm for the expansion of the chromatic symmetric function in the $\st$-basis for any graph.  This algorithm is based on a relation that we call \defn{deletion-near-contraction}. Understanding and simplifying the computation of this algorithm is one of the main motivations for the introduction of marked graphs and their associated polynomials in the following sections.

The main reason that the chromatic symmetric function does not satisfy the classical deletion-contraction relation is that when we contract an edge the resulting graph has one fewer vertex; therefore the resulting difference of the chromatic symmetric functions is no longer homogeneous. In contrast, when contracting an edge in a weighted graph the total weight is preserved (since the weights of the endpoints of the contracted edge are added), and thus the weighted chromatic symmetric functions of the original and the contracted graph are homogeneous of the same degree. 

We now introduce a new relation that also preserves the degree for unweighted graphs, i.e., graphs with all vertex weights equal to 1, and we will show that the  chromatic symmetric function satisfies this relation.  We will first define the \defn{near-contraction operation} on weighted graphs:  
  Given a weighted graph $(G,\omega)$ and a non-loop edge $e$ in $G$, we denote by $v'$ the contracted vertex in $(G/e,\omega/e)$ and define  $(G,\omega)\odot e$ to be the weighted graph $(G/e\cup \{v'v''\},\omega')$ where $v''$ is a new vertex, and $\omega'$ is given  by
\[
\omega'(u)=\begin{cases}
\omega/e(v')-1,&\text{if $u=v'$},\\
1,&\text{if $u=v''$},\\
\omega/e(u),&\text{otherwise.}
\end{cases}
\]
In other words, $(G,\omega)\odot e$ is obtained by first contracting $e$ into the contracted vertex $v'$ and then attaching a leaf $v''$ of weight $1$ to $v'$ and decreasing the weight of $v'$ by $1$ to keep the total weight constant. The pendant edge  $\ell_e=v'v''$ is called the \defn{near-contracted edge} of $(G,\omega)\odot e$. We will refer to $(G,\omega)\odot e$ as the \emph{near-contraction} of $e$.

\begin{example} We illustrate the near-contraction operation on a graph, here the $B_i$ are graphs and they do not need to be disjoint from each other. 

\begin{center}
\begin{tikzpicture}[scale = 0.4,thick, baseline={(0,-1ex/2)}]
\tikzstyle{vertex} = [black, shape = circle, fill=black, minimum size = 3.5pt, inner sep = 1pt]
\node[vertex] (G7) at (8.0, -1.5) [shape = circle, draw] {};
\draw (7,-1.5) ellipse (1.5cm and 1cm);
\node[] at (6.5,1.5) {$B_1$};
\node[vertex] (G6) at (10.0, 0) [shape = circle, draw] {};
\node[vertex] (G5) at (8.0, 1.5) [shape = circle, draw] {};
\draw (7,1.5) ellipse (1.5cm and 1cm);
\node[] at (6.5,-1.5) {$B_2$};
\node[vertex] (G4) at (12.0, 0) [shape = circle, draw] {};
\node[vertex] (G3) at (14.0, 2) [shape = circle, draw] {};
\draw (15,2) ellipse (1.5cm and .8cm);
\node[] at (15.5,-2) {$B_5$};
\node[vertex] (G2) at (14.0, -2) [shape = circle, draw] {};
\draw (15,-2) ellipse (1.5cm and .8cm);
\node[] at (15.5,2) {$B_3$};
\node[vertex] (G1) at (15.0, 0) [shape = circle, draw] {};
\draw (16,0) ellipse (1.5cm and .8cm);
\node[] at (16,0) {$B_4$};
\draw[thick] (G7) -- (G6);
\draw[thick] (G5)  -- (G6);
\draw[thick] (G4)  -- (G6) node[midway,above] {$e$};
\draw[thick] (G4) -- (G2);
\draw[thick] (G3) -- (G4);
\draw[thick] (G1) -- (G4);
\node[] at (11,-4) {$(G,\omega)$};
\end{tikzpicture}\qquad  
\begin{tikzpicture}[scale = 0.5,thick, baseline={(0,-1ex/2)}]
\tikzstyle{vertex} = [black, shape = circle, fill=black, minimum size = 3.5pt, inner sep = 1pt]
\node[] at (5, 0) {$\longrightarrow$ \qquad};
\node[vertex] (G7) at (9.0, -1.5) [shape = circle, draw] {};
\draw (8,-1.5) ellipse (1.5cm and 1cm);
\node[] at (7.5,1.5) {$B_1$};
\node[vertex] (G6) at (10.0, 0) [shape = circle, draw] {};
\node[] at (9.3, .2) {$v''$};
\node[vertex] (G5) at (9.0, 1.5) [shape = circle, draw] {};
\draw (8,1.5) ellipse (1.5cm and 1cm);
\node[] at (7.5,-1.5) {$B_2$};
\node[vertex] (G4) at (12.0, 0) [shape = circle, draw] {};
\node[] at (12,.7) {$v'$};
\node[vertex] (G3) at (14.0, 2) [shape = circle, draw] {};
\draw (15,2) ellipse (1.5cm and .8cm);
\node[] at (15.5,-2) {$B_5$};
\node[vertex] (G2) at (14.0, -2) [shape = circle, draw] {};
\draw (15,-2) ellipse (1.5cm and .8cm);
\node[] at (15.5,2) {$B_3$};
\node[vertex] (G1) at (15.0, 0) [shape = circle, draw] {};
\draw (16,0) ellipse (1.5cm and .8cm);
\node[] at (16,0) {$B_4$};
\draw[thick] (G7) -- (G4);
\draw[thick] (G5)  -- (G4);
\draw[thick] (G4)  -- (G6);
\draw[thick] (G4) -- (G2);
\draw[thick] (G3) -- (G4);
\draw[thick] (G1) -- (G4);
\node[] at (11,-4) {$(G,\omega)\odot e$};
\end{tikzpicture}
\end{center}
\end{example}

\begin{prop}
\label{prop:near_DC}
Let $(G,\omega)$ be a weighted graph and $e$ be a non-loop edge of $G$. Then we have 
\begin{equation}
    \label{eq:lemma_near_contraction}
X_{(G/e,\omega/e)} = X_{((G,\omega)\odot e)\setminus \ell_e} - X_{(G,\omega)\odot e}.
\end{equation}
Moreover, the {weighted} chromatic symmetric function satisfies the \emph{deletion near-contraction} formula:
\begin{equation}
    \label{eq:deletion_near_contraction}
X_{(G,\omega)}=X_{(G\setminus e,\omega)} - X_{((G,\omega)\odot e)\setminus \ell_e} + X_{(G,\omega)\odot e}.
\end{equation}
{Moreover, if $G$ is simple, then 
\[X_{(G,\omega)}=X_{(G\setminus e,\omega)} - X_{((G,\omega)\odot e)^s\setminus \ell_e} + X_{((G,\omega)\odot e)^s}.\]
}
\end{prop}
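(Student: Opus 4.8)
The plan is to deduce all three identities from the weighted deletion--contraction formula \eqref{w_delcont}, applied to the pendant edge $\ell_e$ of the near-contraction.

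First I would prove \eqref{eq:lemma_near_contraction}. Apply \eqref{w_delcont} to the edge $\ell_e=v'v''$ of $(G,\omega)\odot e$. The deletion term is $((G,\omega)\odot e)\setminus\ell_e$. For the contraction term, observe that $\ell_e$ is a pendant edge whose leaf $v''$ has weight $1$, so contracting it simply merges $v''$ back into $v'$: the weight of the resulting vertex is $((\omega/e)(v')-1)+1=(\omega/e)(v')$, and no new edges appear since $v''$ was incident only with $\ell_e$. Hence $((G,\omega)\odot e)/\ell_e$ is $\omega$-isomorphic to $(G/e,\omega/e)$, and since $X$ is invariant under $\omega$-isomorphism, \eqref{w_delcont} reads $X_{(G,\omega)\odot e}=X_{((G,\omega)\odot e)\setminus\ell_e}-X_{(G/e,\omega/e)}$, which is exactly \eqref{eq:lemma_near_contraction} after rearranging.

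Next, \eqref{eq:deletion_near_contraction} follows at once by substituting \eqref{eq:lemma_near_contraction} into the deletion--contraction formula $X_{(G,\omega)}=X_{(G\setminus e,\omega)}-X_{(G/e,\omega/e)}$ to eliminate the term $X_{(G/e,\omega/e)}$. For the simple-graph version I would use the standard fact (immediate from \eqref{eq:weightedchromatic}, or from \eqref{w_delcont} as noted in the text) that $X_{(H,\omega)}=X_{(H^s,\omega)}$ for every weighted graph $H$. Since $\ell_e$ is a pendant edge, it is never a loop nor a member of a parallel class, so it survives simplification and deleting $\ell_e$ commutes with passing to the simplification; therefore $X_{((G,\omega)\odot e)\setminus\ell_e}=X_{((G,\omega)\odot e)^s\setminus\ell_e}$ and $X_{(G,\omega)\odot e}=X_{((G,\omega)\odot e)^s}$. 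Substituting these two equalities into \eqref{eq:deletion_near_contraction} gives the last identity.

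The argument is essentially bookkeeping, so I do not expect a genuine obstacle; the only steps worth a careful check are the identification of $((G,\omega)\odot e)/\ell_e$ with $(G/e,\omega/e)$ — in particular that the weight decrease of $v'$ by $1$ and the new leaf of weight $1$ cancel exactly and that contracting $\ell_e$ introduces no parallel edges — and, in the simple case, the observation that $\ell_e$ is untouched by simplification so that edge deletion and simplification commute.
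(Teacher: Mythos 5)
Your argument is correct and follows essentially the same route as the paper: apply the weighted deletion--contraction formula \eqref{w_delcont} to the pendant edge $\ell_e$ of $(G,\omega)\odot e$, identify $((G,\omega)\odot e)/\ell_e$ with $(G/e,\omega/e)$, substitute back into deletion--contraction for $e$, and invoke invariance of $X$ under simplification for the simple case. One small caution: the fact $X_{(H,\omega)}=X_{(H^s,\omega)}$ holds only for \emph{loopless} $H$ (a looped graph has $X=0$ while its simplification need not), so you should note, as the paper does, that when $G$ is simple the near-contraction of $e$ creates no loops, which makes your application of that fact legitimate.
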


\begin{proof}
It is easy to see that contracting the near-contracted edge $\ell_e$ in  $(G,\omega)\odot e$  yields $(G/e,\omega/e)$ and thus the first assertion follows directly from the deletion-contraction formula~\eqref{w_delcont} applied to $(G,\omega)\odot e$ and $\ell_e$. Thus, \eqref{eq:deletion_near_contraction} follows directly from substituting \eqref{eq:lemma_near_contraction} into the  deletion-contraction formula now applied to $(G,\omega)$ and $e$.  Note that deleting or near-contracting an edge does not create loops, unless the near-contraction is applied to a parallel edge. Then, when $G$ is simple all graphs that appear on the right-hand side of~\eqref{eq:deletion_near_contraction} are loopless, and the last statement follows since the weighted chromatic function of a loopless weighted graph equals that of its simplification.
\end{proof}

\begin{figure}[hbt!]
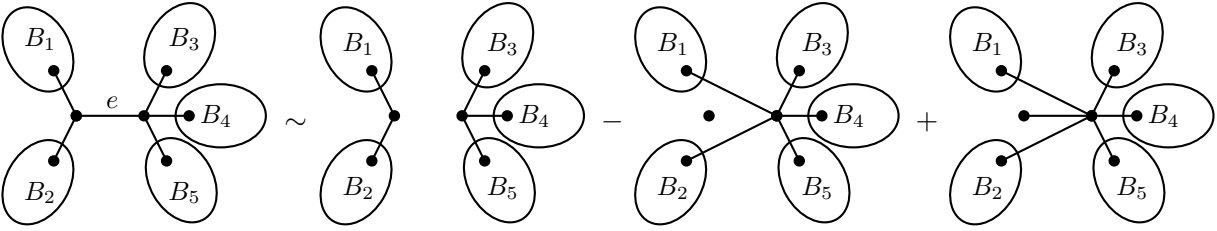

\[
\begin{mygraph}[baseline=-5pt,thick,scale=0.6]
\draw (-0.5,1) node {} -- (0,0) 
               node {} -- (-0.5,-1) node {};
\draw (2,1) node {} -- (1.5,0) 
            node {} -- (2,-1) node {};
\draw (2.5,0) node {} -- (1.5,0);
\draw (0,0) node {} -- (1.5,0);
\draw[rotate=-60] (2.4,1.3) ellipse (1 and 0.7);
\draw[rotate=-60] (-1.7,0) ellipse (1 and 0.7);
\draw[rotate=60] (-1.7,0) ellipse (1 and 0.7);
\draw (3.2,0) ellipse (1 and 0.7);
\draw[rotate=60] (2.5,-1.2) ellipse (1 and 0.7);
\node[fill=none,draw=none] at (-0.8,1.7)  {$B_1$};
\node[fill=none,draw=none] at (-0.8,-1.7)  {$B_2$};
\node[fill=none,draw=none] at (2.4,1.7)  {$B_3$};
\node[fill=none,draw=none] at (3.1,0)  {$B_4$};
\node[fill=none,draw=none] at (2.4,-1.7)  {$B_5$};
\node[fill=none,draw=none] at (.8,.3)  {$e$};
\end{mygraph}
\ \sim 
\begin{mygraph}[baseline=-5pt,thick,scale=0.6]
\draw (-0.5,1) node {} -- (0,0);
\draw (0,0)  node {} -- (-0.5,-1) node {};
\draw (2,1) node {} -- (1.5,0) 
            node {} -- (2,-1) node {};
\draw (2.5,0) node {} -- (1.5,0);
\draw[rotate=-60] (2.4,1.3) ellipse (1 and 0.7);
\draw[rotate=-60] (-1.7,0) ellipse (1 and 0.7);
\draw[rotate=60] (-1.7,0) ellipse (1 and 0.7);
\draw (3.2,0) ellipse (1 and 0.7);
\draw[rotate=60] (2.5,-1.2) ellipse (1 and 0.7);
\node[fill=none,draw=none] at (-0.8,1.6)  {$B_1$};
\node[fill=none,draw=none] at (-0.8,-1.6)  {$B_2$};
\node[fill=none,draw=none] at (2.4,1.6)  {$B_3$};
\node[fill=none,draw=none] at (3.1,0)  {$B_4$};
\node[fill=none,draw=none] at (2.4,-1.6)  {$B_5$};
\end{mygraph}
\ -
\begin{mygraph}[baseline=-5pt,thick,scale=0.6]
\draw (0,0) node {};
\draw (-0.5,1) node {} -- (1.5,0) 
               node {} -- (-0.5,-1) node {};
\draw (2,1) node {} -- (1.5,0) 
            node {} -- (2,-1) node {};
\draw (2.5,0) node {} -- (1.5,0);
\draw[rotate=-60] (2.4,1.3) ellipse (1 and 0.7);
\draw[rotate=-60] (-1.7,0) ellipse (1 and 0.7);
\draw[rotate=60] (-1.7,0) ellipse (1 and 0.7);
\draw (3.2,0) ellipse (1 and 0.7);
\draw[rotate=60] (2.5,-1.2) ellipse (1 and 0.7);
\node[fill=none,draw=none] at (-0.8,1.6)  {$B_1$};
\node[fill=none,draw=none] at (-0.8,-1.6)  {$B_2$};
\node[fill=none,draw=none] at (2.4,1.6)  {$B_3$};
\node[fill=none,draw=none] at (3.1,0)  {$B_4$};
\node[fill=none,draw=none] at (2.4,-1.6)  {$B_5$};
\end{mygraph}
\ +
\begin{mygraph}[baseline=-5pt,thick,scale=0.6]
\draw (-0.5,1) node {} -- (1.5,0);
\draw (1.5,0) node {}  -- (-0.5,-1) node {};
\draw (2,1) node {} -- (1.5,0) 
            node {} -- (2,-1) node {};
\draw (2.5,0) node {} -- (1.5,0);
\draw (0,0) node {} -- (1.5,0);
\draw[rotate=-60] (2.4,1.3) ellipse (1 and 0.7);
\draw[rotate=-60] (-1.7,0) ellipse (1 and 0.7);
\draw[rotate=60] (-1.7,0) ellipse (1 and 0.7);
\draw (3.2,0) ellipse (1 and 0.7);
\draw[rotate=60] (2.5,-1.2) ellipse (1 and 0.7);
\node[fill=none,draw=none] at (-0.8,1.6)  {$B_1$};
\node[fill=none,draw=none] at (-0.8,-1.6)  {$B_2$};
\node[fill=none,draw=none] at (2.4,1.6)  {$B_3$};
\node[fill=none,draw=none] at (3.1,0)  {$B_4$};
\node[fill=none,draw=none] at (2.4,-1.6)  {$B_5$};
\end{mygraph}
\]
\caption{The deletion-near-contraction relation applied to edge $e$.}
\label{fig:moves3}
\end{figure}
\begin{remark}
Observe that if $(G,\omega)$ is unweighted, i.e., all the weights are equal to $1$, then $(G\setminus e,\omega)$, $(G,\omega)\odot e$ and $((G,\omega)\odot e)\setminus \ell_e$ are also unweighted. Thus, Proposition~\ref{prop:near_DC} holds as well for the (unweighted) chromatic symmetric function.
\end{remark}

Observe that if $e$ is a pendant edge of $G$ and the weight of the associated leaf 
is $1$, then $(G,\omega)\odot e$ is $\omega$-isomorphic to $(G,\omega)$ by an isomorphism sending $e$ onto $\ell_e$. In this case 
Equation \eqref{eq:deletion_near_contraction} does not give any information. Thus, it only 
makes sense to apply Proposition~\ref{prop:near_DC} to edges that are either 
internal or have an incident leaf of weight larger than one. Moreover, it is easy to see that 
deleting or near-contracting internal edges decreases the number of internal edges in a graph. Thus, after applying Proposition~\ref{prop:near_DC} recursively only to internal edges the procedure must always stop. 

Let $G$ be a simple graph. Recall that in this case \defn{internal} edges are those with both endpoints having degree greater than 1.
Notice that if $G$ has no internal
edges, then all the components of $G$ are star graphs since $G$ is simple.  Thus, simple graphs with no internal edges are \emph{star forests}. The following algorithm recursively builds a deletion-near-contraction tree to compute the expansion of the chromatic symmetric function of $G$ in the star-basis.

\vskip .1in
\noindent {\bf Algorithm: } Star-Expansion 

\noindent{\bf Input:} A  simple graph $G$.

\noindent{\bf Idea: } Construct a rooted tree with root labeled $G$, each new vertex is obtained by applying the deletion-near-contraction relation to the parent vertex if the parent graph has at least one internal edge.  The edges of the tree are labeled by $+$ or $-$.

\noindent{\bf Initialization: } Let $\mathcal{T}$ be a tree with root labeled $G$ and no edges. 

\noindent{\bf Iteration: } If $H$ is the label of a leaf vertex (has no children) 
of $\mathcal{T}$ and $H$ has some internal edge $e$, then extend $\mathcal{T}$ by adding three child vertices to $H$ labeled $H \setminus e$, {$(H\odot e)^s$ and $(H\odot e)^s\setminus \ell_e$}, and label the three new edges with $+$ or $-$ according to the corresponding coefficient in the deletion-near-contraction relation.  The algorithm terminates when no leaf of $\mathcal{T}$ contains internal edges. 

\noindent{\bf Output: } A rooted tree $\mathcal{T}(G )$ with leaves labeled by star forests. 
\vskip .1in 

Before we state our next theorem we introduce some notation.  Given any graph $G$, let $\iota(G)$ denote the number of isolated vertices of $G$, and for a star forest $H$, let $\lambda(H)$ be the partition whose parts are the orders of the connected components of $H$. 

\begin{theorem}
\label{thm:stfree}
For every simple graph $G$, let $\mathcal{T}(G)$ be an output of the Star-Expansion Algorithm and $L(\mathcal{T}(G))$ be the multiset of leaf labels of $\mathcal{T}(G)$. Then 
$$ X_G = \sum_{H\in L(\mathcal{T}(G))} (-1)^{\iota(H)-\iota(G)} \st_{\lambda(H)}.$$

In addition, no cancellations occur in the computation; that is, for any partition $\lambda$, all terms $\st_{\lambda}$ appear with the same sign.
\end{theorem}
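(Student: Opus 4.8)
The plan is to prove both assertions at once by induction on the number of internal edges of $G$, peeling off one internal edge at a time via the simple, unweighted form of the deletion-near-contraction relation (Proposition~\ref{prop:near_DC} together with the subsequent Remark): for a simple graph $H$ with internal edge $e$, $X_H = X_{H\setminus e} - X_{(H\odot e)^s\setminus\ell_e} + X_{(H\odot e)^s}$, which is precisely what the algorithm's iteration step encodes. In the base case $G$ has no internal edge, hence $G$ is a star forest (as noted before the algorithm), $\mathcal{T}(G)$ is its single root, and $X_G = \st_{\lambda(G)}$ follows from multiplicativity of $X$ over disjoint unions together with $\st_k = X_{St_k}$; this matches the stated formula since $\iota(G)-\iota(G)=0$. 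The key preliminary observation, used in both parts, is that for \emph{any} star forest $H$ the isolated vertices are exactly its order-one components, so
\[\iota(H) = m_1(\lambda(H)),\]
the number of parts of $\lambda(H)$ equal to $1$; thus $\iota(H)$ depends only on $\lambda(H)$.

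The core of the proof is a sign-bookkeeping lemma: if $e=uv$ is an internal edge of a simple graph $H$, then the $\pm$ sign the algorithm assigns to each of the three tree-edges $H\to H\setminus e$, $H\to (H\odot e)^s\setminus\ell_e$, $H\to (H\odot e)^s$ equals $(-1)^{\iota(\text{child})-\iota(H)}$. First I would check $\iota(H\setminus e)=\iota(H)$, since deleting the internal edge $e$ leaves its endpoints of degree $\ge 1$; then $\iota((H\odot e)^s\setminus\ell_e)=\iota(H)+1$, since deleting the pendant edge $\ell_e=v'v''$ isolates exactly the new leaf $v''$ while $v'$ keeps degree $\ge 2$; and finally the delicate case $\iota((H\odot e)^s)=\iota(H)$, which I expect to be the main obstacle. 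Here one must verify that neither contracting $e$ nor the ensuing simplification creates an isolated vertex: because $H$ is simple, contracting the non-loop edge $e$ produces no loop, so the simplification only removes parallel edges, and removing parallel edges never strands a vertex; because $e$ is internal, the contracted vertex $v'$ has degree $\ge 2$ in $H/e$ and degree $\ge 3$ in $H\odot e$. Matching the three shifts $0,+1,0$ against the signs $+,-,+$ in the relation establishes the lemma.

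Granting the lemma, the first formula follows by telescoping: along the root-to-leaf path of any leaf $H\in L(\mathcal{T}(G))$ the product of edge-signs is $(-1)^{\iota(H)-\iota(G)}$, and since each leaf label is a star forest $H$ with $X_H=\st_{\lambda(H)}$, iterating the relation (applying the inductive hypothesis to the three subtrees, each of which is itself an output of the algorithm on the corresponding child) yields $X_G = \sum_{H\in L(\mathcal{T}(G))}(-1)^{\iota(H)-\iota(G)}\st_{\lambda(H)}$; the induction is well-founded and the algorithm terminates because, as recalled before the algorithm, each of the three children has strictly fewer internal edges than its parent. For the no-cancellation statement, fix a partition $\lambda$: every leaf $H$ with $\lambda(H)=\lambda$ contributes $(-1)^{\iota(H)-\iota(G)}\st_\lambda$, and by the preliminary observation $\iota(H)=m_1(\lambda(H))=m_1(\lambda)$ no matter which such $H$ we take, so all of these contributions share the sign $(-1)^{m_1(\lambda)-\iota(G)}$; hence the coefficient of $\st_\lambda$ in $X_G$ equals $(-1)^{m_1(\lambda)-\iota(G)}$ times the number of leaves $H$ with $\lambda(H)=\lambda$, with no cancellation. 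What remains is routine: a clean write-up of the telescoping substitution and a check of the degenerate configurations in the $\iota((H\odot e)^s)=\iota(H)$ step (for instance when $u$ and $v$ have coinciding neighbourhoods outside $e$).
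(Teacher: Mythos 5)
Your proposal is correct and follows essentially the same route as the paper: induction on the number of internal edges, the deletion-near-contraction relation for the inductive step, and the key sign bookkeeping $\iota(G\setminus e)=\iota((G\odot e)^s)=\iota(G)$, $\iota((G\odot e)^s\setminus\ell_e)=\iota(G)+1$. Your no-cancellation argument via $\iota(H)=m_1(\lambda(H))$ is just a restatement of the paper's observation that star forests with equal $\lambda$ are isomorphic, and your verification of the $\iota$ shifts is in fact somewhat more detailed than the paper's.
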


\begin{proof}
The proof is by induction on the number of internal edges of $G$. If $G$ has no internal edges, then $G$ is a star forest, and hence the algorithm stops after the initialization step and outputs a tree with its only vertex labeled by $G$. Of course in this case $X_G=\st_{\lambda(G)}$ and thus the assertion holds. 
For the inductive step, suppose that  $G$ is a  simple graph with $n+1$ internal edges. Let $e$ be an internal edge of $G$ and apply the first iteration of the Star-Expansion algorithm to $G$ and $e$. On one hand, this yields three children, labeled by $G\setminus e$, $(G\odot e)^s$ and $(G\odot e)^s\setminus \ell_e$, with corresponding signs $+$,$+$ and $-$ in their edges. The multiset of leaf labels $L(\mathcal{T}(G))$ corresponds to the union of the multisets $L(\mathcal{T}({G\setminus e})$, $L(\mathcal{T}({(G\odot e)^s}))$ and $L(\mathcal{T}({(G\odot e)^s\setminus \ell_e}))$. 
On the other hand, by deletion-near-contraction, 
we get 
\[ X_G = X_{G\setminus e} + X_{(G\odot e)^s} - X_{{(G\odot e)}^s \setminus \ell_e}.\]
 Since these graphs have at most $n$ internal edges, by the induction hypothesis, we get
 that 

 \begin{align*}
     X_G  = &
     \sum_{H\in L(\mathcal{T}(G\setminus e))} (-1)^{\iota(H)-\iota(G\setminus e)}\st_{\lambda(H)}+
         \sum_{H\in L(\mathcal{T}(G\odot e)^s)} (-1)^{\iota(H)-\iota({(G\odot e)}^s)}\st_{\lambda(H)}
         \\
         & - \sum_{H\in L(\mathcal{T}(G\odot e)^s\setminus \ell_e)} (-1)^{\iota(H)-\iota((G\odot e)^s\setminus \ell_e)}\st_{\lambda(H)}.
         \end{align*}

Since $e$ is an internal edge, we have 
\[\iota(G)=\iota(G\setminus e)=\iota({(G\odot e)}^s)=\iota((G\odot e)^s\setminus \ell_e)-1.  \]
This gives
\[X_G = \sum_{H\in L(\mathcal{T}(G))} (-1)^{\iota(H)-\iota(G)} \st_{\lambda(H)},\]
which finishes the induction.  

Note that two star forests $H_1, H_2$ are isomorphic if and only if $\lambda(H_1)= \lambda(H_2)$. Thus, for a given partition $\lambda$, all terms $\st_{\lambda}$ arise from isomorphic star forests, so they all have the same sign (as the sign depends only on the isomorphism type of $G$ and $H$).
\end{proof}

\begin{remark}
The proof of Theorem~\ref{thm:stfree} also shows that for a leaf of $\mathcal{T}(G)$ labeled $H$, the difference $\iota(H)-\iota(G)$ is the number of edges in the path in  $\mathcal{T}(G)$ between the root and the leaf that are labeled $-$. Thus the coefficient of the corresponding term $\st_{\lambda(H)}$ in $X_G$ can be read directly from $\mathcal{T}(G)$. \end{remark}

\begin{example} In Figure~\ref{fig:star-expansion}, we show an example of how the algorithm computes the star-expansion of a graph $G$. This says that for the graph at the top we have $X_G = 2\st_4-2\st_{3,1}+\st_{2,2}$.
\begin{figure}[hbt!]
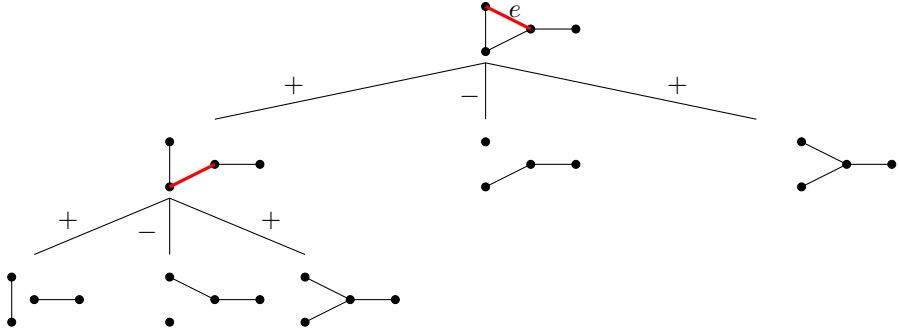

    \centering
    \begin{mygraph2}[scale=0.3]
    \draw (0,1) -- (0,-1) node {}
                -- (2,0) node {}
                -- (0,1) node {};
    \draw (2,0) -- (4,0) node {};
    \draw[very thick, red] (0,1) -- (2,0);
    \node[fill=none,draw=none] at (1.3,.8)  {$e$};
    \node[fill=none,draw=none] at (-8.5,-2.5)  {$+$};
    \node[fill=none,draw=none] at (-.7,-3)  {$-$};
    \node[fill=none,draw=none] at (8.5,-2.5)  {$+$};
    \draw (-12,-4)--(0,-1.5)--(12,-4);
    \draw (0,-1.5)--(0,-4);
    \pgftransformshift{\pgfpoint{-14cm}{-6cm}}
    \draw (0,1) -- (0,-1) node {}
                -- (2,0) node {}
                 (0,1) node {};
    \draw[very thick, red] (0,-1) -- (2,0);
    \draw (2,0) -- (4,0) node {};
    \node[fill=none,draw=none] at (-4.5,-2.5)  {$+$};
    \node[fill=none,draw=none] at (-1,-3)  {$-$};
    \node[fill=none,draw=none] at (4.5,-2.5)  {$+$};
    \draw (-6,-4)--(0,-1.5)--(6,-4);
    \draw (0,-1.5)--(0,-4);
    \pgftransformshift{\pgfpoint{14cm}{0cm}}
   \draw (2,0) node {} to  (0,-1) node {};
    \draw (2,0) -- (4,0) node {};
    \draw (0,1) node {};
    \pgftransformshift{\pgfpoint{14cm}{0cm}}
    \draw (2,0) node {} to (0,-1) node {};
    \draw (2,0)  -- (0,1) node {};
    \draw (2,0) -- (4,0) node {};
    \pgftransformshift{\pgfpoint{-34cm}{-6cm}}
    \draw (-1,1) -- (-1,-1) node {}
                 (0,0) node {}
                 (-1,1) node {};
   
    \draw (0,0) -- (2,0) node {};
    \pgftransformshift{\pgfpoint{6cm}{0cm}}
    \draw (0,1) -- (2,0) node {}
                 (0,-1) node {}
                 (0,1) node {};
    \draw (2,0) -- (4,0) node {};
    \pgftransformshift{\pgfpoint{6cm}{0cm}}
    \draw (0,1) -- (2,0) node {}
                -- (0,-1) node {}
                 (0,1) node {};
    \draw (2,0) -- (4,0) node {};
    \end{mygraph2}
    \caption{The tree $\mathcal{T}(G)$}
    \label{fig:star-expansion}
\end{figure}

\end{example}

We observe that a weighted version of the algorithm Star-Expansion can be defined in a completely analogous manner. In that case, for an input graph $(G, \omega)$ we obtain a tree with leaves labeled by weighted star forests. Then, the analogue of Theorem~\ref{thm:stfree} would express $X_{(G,\omega)}$ in terms of the weighted symmetric functions of weighted star forests, but in this case weighted stars do not lead to a basis. We omit the straightforward details. 

\section{Marked graphs and the $M$-polynomial}\label{sec:Mpoly}
In this section define the $M$-polynomial and introduce its properties.
Let $\NN$ be the set of non-negative integers. A \emph{mark} is a pair $(w,d)$ where $w\in\PP$, $d\in \NN$ and $w\geq d+1$. 
We say that $w$ is the \emph{weight} of the mark and $d$ is its \emph{number of dots}. 

The set of all marks 
{\[\MM = \{(w,d)\mid w\in \PP, d\in \NN,w\geq d+1\}\]}
is endowed with the \defn{dot-sum} operation, denoted  $\dotplus$, defined by 
\[ (w,d)\dotplus (w',d') := (w+w',d+d'+1).\]
It is easy to check that $(\MM,\dotplus)$ is a semigroup. 
 Following Section~\ref{sec:Vpoly}, we will consider weighted graphs (that is, with weights in $\PP$) and  $\MM$-weighted graphs.
In order to avoid confusions we will refer to the latter as \defn{marked graphs}. 
That is, a \defn{marked graph} is a pair $(G,\mathsf{m})$ where $G$ is a graph and $\mathsf{m}:V(G)\rightarrow \MM$ is a function.
Two marked graphs $(G,\mathsf{m})$ and $(G',\mathsf{m}')$ are \defn{mark-isomorphic} if there is an isomorphism $\phi:G\rightarrow G'$ 
such that $\mathsf{m}(v)=\mathsf{m}'(\phi(v))$ for all $v\in V(G)$. 

Let $(G,\mathsf{m})$ be a marked graph. The deletion and contraction operations have already been defined in Section~\ref{sec:Vpoly}. Now we introduce the $M$-polynomial.

\begin{definition}
Let $y$ and $\mathbf{z}=\{z_{w,d}\mid (w,d)\in \MM\}$ be commuting indeterminates.
The \defn{marked graph polynomial} or \defn{$M$-polynomial} of a marked graph $(G,\mathsf{m})$ is defined by the following rules: 
\begin{enumerate}[a)]
\item If $G$ consists only of isolated vertices with corresponding marks $(w_1,d_1)$,$(w_2,d_2)$,$\ldots$,$(w_k,d_k)$, 
then 
\[M_{(G,\mathsf{m})}(\mathbf z,y) = z_{w_1,d_1}z_{w_2,d_2}\cdots z_{w_k,d_k}.\]
\item If $e$ is a loop of $G$, then 
\[ M_{(G,\mathsf{m})}(\mathbf z,y) = y M_{(G\setminus e,\mathsf{m})}(\mathbf z,y).\]
\item If $e$ is a non-loop edge of $G$, then 
\begin{equation*}
M_{(G,\mathsf{m})}(\mathbf z,y) = M_{(G\setminus e,\mathsf{m})}(\mathbf z,y) +  M_{(G/e,\mathsf{m}/e)}(\mathbf z,y).
\end{equation*}
\end{enumerate}
\end{definition}
\begin{example}\label{ex:M-poly}
\label{ex:triangle}
Suppose
$$(G,\mathsf{m}) =\begin{mygraph}[baseline=10pt, every label/.append style={font=\scriptsize}]
\draw (0,0) node[label=left:${(4,1)}$] {} -- (1,0)
            node[label=right:${(1,0)}$] {} -- (0.5,0.9)
            node[label=above:${(2,0)}$] {} -- (0,0);
\draw (-0.05,-0.25) node[draw=none,fill=none] {};
\draw (0.5,-0.2) node[draw=none,fill=none] {};
\end{mygraph}
$$
then $M_{(G,\mathsf{m})} = z_{2,0}z_{4,1}z_{1,0}+z_{4,1}z_{3,1}+z_{1,0}z_{6,2}+z_{5,2}z_{2,0}+z_{7,3}(2+y)$ {(see Figure~\ref{fig:Mtriangle}).}

\begin{figure}[hbt!]
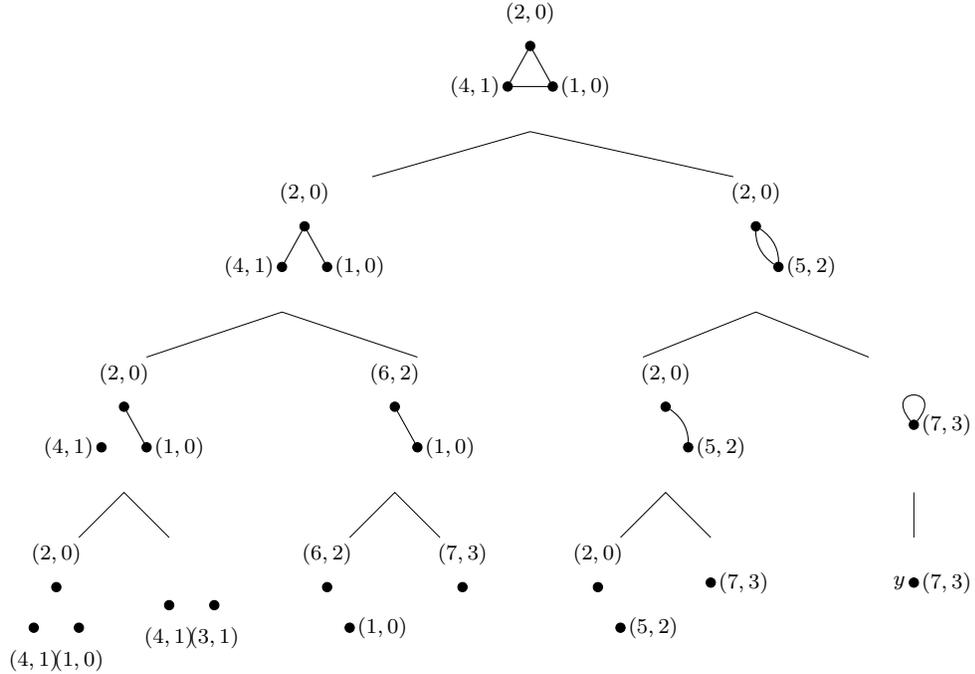

\begin{mygraph}[scale=0.6,baseline=10pt,every label/.append style={font=\scriptsize}]
\draw (0,0) node[label=left:${(4,1)}$] {} -- (1,0)
            node[label=right:${(1,0)}$] {} -- (0.5,0.9)
            node[label=above:${(2,0)}$] {} -- (0,0);
\draw[very thin] (0.5,-1) -- (5,-2)
                 (0.5,-1) -- (-3,-2)
                 (5.5,-5) -- (3,-6)
                 (5.5,-5) -- (8,-6)
                 (-5,-5) -- (-8,-6)
                 (-5,-5) -- (-2,-6)
                 (-7.5,-10)-- (-8.5,-9) -- (-9.5,-10)
                 (-3.5,-10)--(-2.5,-9)--(-1.5,-10)
                 (4.5,-10)--(3.5,-9)--(2.5,-10)
                 (9,-9)--(9,-10);
\begin{scope}[xshift=-5cm,yshift=-4cm]
\draw (0,0) node[label=left:${(4,1)}$] {}  (1,0)
            node[label=right:${(1,0)}$]{} -- (0.5,0.9)
            node[label=above:${(2,0)}$] {} --  (0,0);
\end{scope}
\begin{scope}[xshift=5cm,yshift=-4cm]
\draw (1,0) node[label=right:${(5,2)}$] {} edge [bend left] (0.5,0.9);
\draw (0.5,0.9) node[label=above:${(2,0)}$] {} edge [bend left]  (1,0);
\end{scope}
\begin{scope}[xshift=-9cm,yshift=-8cm]
\draw (0,0) node[label=left:${(4,1)}$] {}  (1,0)
            node[label=right:${(1,0)}$]{} -- (0.5,0.9)
            node[label=above:${(2,0)}$] {}  (0,0);
\end{scope}
\begin{scope}[xshift=-3cm,yshift=-8cm]
\draw 
(1,0) node[label=right:${(1,0)}$]{} -- (0.5,0.9)
      node[label=above:${(6,2)}$] {};
\end{scope}
\begin{scope}[xshift=3cm,yshift=-8cm]
\draw (1,0) node[label=right:${(5,2)}$] {};
\draw (0.5,0.9) node[label=above:${(2,0)}$] {} edge [bend left]  (1,0);
\end{scope}
\begin{scope}[yshift=-8cm,xshift=9cm]
\draw (0,0.5) node[label=right:${(7,3)}$] {} edge [in=50,out=130,loop] (0.5,0.9);
\end{scope}
\begin{scope}[xshift=-10.5cm,yshift=-12cm]
\draw (0,0) node[label=below:${(4,1)}$] {}  (1,0)
            node[label=below:${(1,0)}$]{}  (0.5,0.9)
            node[label=above:${(2,0)}$] {}  (0,0);
\end{scope}
\begin{scope}[xshift=-7.5cm,yshift=-12cm]
\draw (0,0.5) node[label=below:${(4,1)}$] {}  (1,0.5)
            node[label=below:${(3,1)}$] {}  (0,0);
\end{scope}
\begin{scope}[xshift=-4.5cm,yshift=-12cm]
\draw 
(1,0) node[label=right:${(1,0)}$]{} (0.5,0.9)
      node[label=above:${(6,2)}$] {};
\end{scope}
\begin{scope}[xshift=-1.5cm,yshift=-12cm]
\draw 
(0.5,0.9)  node[label=above:${(7,3)}$] {};
\end{scope}
\begin{scope}[xshift=1.5cm,yshift=-12cm]
\draw (1,0) node[label=right:${(5,2)}$] {};
\draw (0.5,0.9) node[label=above:${(2,0)}$] {}  (1,0);
\end{scope}
\begin{scope}[xshift=4.5cm,yshift=-11cm]
\draw (0,0) node[label=right:${(7,3)}$] {};
\end{scope}
\begin{scope}[xshift=9cm,yshift=-11cm]
\draw (0,0) node[label=left:$y$,label=right:${(7,3)}$] {};
\end{scope}
\end{mygraph}
\caption{Computation of the $M$-polynomial with the definition (Example~\ref{ex:M-poly}).}
\label{fig:Mtriangle}
\end{figure}
\end{example}

Observe that the  $M$-polynomial is a generalization of the $W$-polynomial to marked graphs. It is direct to see that is a mark-isomorphism invariant. It is also a specialization of the $\V$-polynomial described in Section~\ref{sec:Vpoly} (see Proposition~\ref{prop:smr}). Thus, many of the results known about the $W$-polynomial can be extended to the $M$-polynomial. 

Given a marked graph $(G,\mathsf{m})$ and a subset $U$ of vertices of $G$, the \defn{total mark} 
of $U$, denoted $\mathsf{m}(U)$, is the dot-sum of the marks of all  vertices in $U$. 
Given a mark $(w,d)\in\MM$, an $\MM$-partition of $(w,d)$ is a multiset 
$(w_1,d_1),\ldots,(w_l,d_l)$ such that $\sum_{i=1}^{l}w_i=w$ and $\sum_{i=1}^{l}d_i=d - l + 1$. The following proposition is the analog of Theorem~\ref{teoUtoX} for the $M$-polynomial.

\begin{prop}
\label{prop:smr}
Let $(G,\mathsf{m})$ be a marked graph. Then, the $M$-polynomial is  well-defined.
Moreover, it has the following \emph{states model representation}
\begin{equation}
\label{Mformula1}
M_{(G,\mathsf{m})}({\mathbf{z}},y) = \sum_{A\subseteq E(G)}
\mathbf{z}_{\lambda(G,\mathsf{m},A)}(y-1)^{|A|-r_G(A)},
\end{equation}
{where $r_G(A)$ is the rank of $A$ defined in Equation  \eqref{rankDef}} and $\lambda(G,\mathsf{m},A)$ is the $\MM$-partition induced by the total marks of the connected components of the spanning subgraph $G|_A$.
\end{prop}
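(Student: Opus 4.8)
The plan is to realize the $M$-polynomial as an explicit transform of the $\V$-polynomial of Section~\ref{sec:Vpoly}, in exactly the way the $W$-polynomial is (the corollary following Theorem~\ref{recipeV}), and then to extract \eqref{Mformula1} from the spanning-subgraph expansion of the $\V$-polynomial. First I would record that $(\MM,\dotplus)$ really is a commutative semigroup; the only non-formal point is closure, which holds because $w\ge d+1$ and $w'\ge d'+1$ force $w+w'\ge(d+d'+1)+1$, so $(w,d)\dotplus(w',d')\in\MM$. Hence a marked graph is an $\MM$-weighted graph in the sense of Section~\ref{sec:Vpoly}, and the deletion and contraction used to define the $M$-polynomial are precisely the ones introduced there. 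I would then set
\[
\widetilde M_{(G,\mathsf m)}(\mathbf z,y):=(y-1)^{-|V(G)|}\,\V_{(G,\mathsf m)}\bigl(\{(y-1)z_{w,d}\}_{(w,d)\in\MM},\ \{\gamma_e=y-1\}_{e\in E(G)}\bigr),
\]
a priori an element of $\QQ(y)[\mathbf z]$, and argue that it equals $M_{(G,\mathsf m)}(\mathbf z,y)$.

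The core step is to verify that $\widetilde M$ obeys the three defining rules of the $M$-polynomial. For a graph of $k$ isolated vertices the $\V$-polynomial equals $\prod_i z_{w_i,d_i}$; the substitution $z_{w,d}\mapsto(y-1)z_{w,d}$ multiplies it by $(y-1)^k$, which the prefactor $(y-1)^{-k}$ cancels, giving rule~a). Deleting a loop multiplies the $\V$-polynomial by $\gamma_e+1=y$ and leaves $|V(G)|$ unchanged, giving rule~b). For a non-loop edge $e$, relation \eqref{especial} at $\gamma_e=y-1$ reads $\V_{(G,\mathsf m)}=\V_{(G\setminus e,\mathsf m)}+(y-1)\V_{(G/e,\mathsf m/e)}$; since $|V(G\setminus e)|=|V(G)|$ and $|V(G/e)|=|V(G)|-1$, the factor $y-1$ is exactly what converts the prefactor $(y-1)^{-|V(G)|}$ into $(y-1)^{-|V(G/e)|}$, so $\widetilde M_{(G,\mathsf m)}=\widetilde M_{(G\setminus e,\mathsf m)}+\widetilde M_{(G/e,\mathsf m/e)}$, which is rule~c). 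Since the $\V$-polynomial is well defined by \cite[Proposition 3.2]{ellis2011tutte}, so is $\widetilde M$; and a short induction on $|E(G)|$ shows that any legal evaluation of the recursion defining $M_{(G,\mathsf m)}$ agrees with $\widetilde M$ at every stage. This proves at once that the $M$-polynomial is well defined and that it equals $\widetilde M$.

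To obtain \eqref{Mformula1} I would substitute the states-model expansion of the $\V$-polynomial (Theorem~\ref{thm:Vspanning}, with the product taken over the edges of $A$) into the displayed formula for $\widetilde M$. At $\gamma_e=y-1$ that expansion is $\sum_{A\subseteq E(G)}\mathbf z_{\lambda(G,\mathsf m,A)}(y-1)^{|A|}$; the substitution $z_{w,d}\mapsto(y-1)z_{w,d}$ multiplies the $A$-term by $(y-1)^{k(G|_A)}$, because $\mathbf z_{\lambda(G,\mathsf m,A)}$ is a product of $k(G|_A)$ variables, one per connected component of $G|_A$ (whose total mark is a single element of $\MM$); and the global factor $(y-1)^{-|V(G)|}$ then makes the exponent of $y-1$ in the $A$-term equal to $|A|+k(G|_A)-|V(G)|=|A|-r_G(A)$ by \eqref{rankDef}. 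This is exactly \eqref{Mformula1}; since $|A|-r_G(A)\ge0$ for every $A$, the right-hand side is a genuine polynomial in $\mathbf z$ and $y$, confirming a posteriori that $\widetilde M\in\QQ[\mathbf z,y]$.

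The main obstacle I anticipate is purely the bookkeeping of powers of $y-1$ in the two previous paragraphs: one must track the rescaling of the $z$-variables (one factor of $y-1$ per connected component) and how the prefactor $(y-1)^{-|V|}$ is absorbed along the recursion, which is the precise point where the coefficient-$1$ contraction rule of the $M$-polynomial is reconciled with the coefficient-$\gamma_e$ contraction rule \eqref{especial} of the $\V$-polynomial. Everything else is formal. An alternative that bypasses the $\V$-polynomial would be to prove \eqref{Mformula1} directly by induction on $|E(G)|$, following the proof of \cite[Theorem 4.3]{noble99weighted} with $\PP$ replaced by $\MM$ throughout; there the semigroup structure enters only through the identity that the total mark of a $c$-vertex component with marks $(w_i,d_i)$ equals $\bigl(\sum_i w_i,\ \sum_i d_i+c-1\bigr)$, which is compatible with the notion of $\MM$-partition.
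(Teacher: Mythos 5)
Your proposal is correct and follows essentially the same route as the paper: both realize $M_{(G,\mathsf m)}$ as $(y-1)^{-|V(G)|}\V_{(G,\mathsf m)}((y-1)\mathbf z,\{\gamma_e=y-1\})$ (the paper packages the rule-checking and well-definedness into a single application of Theorem~\ref{recipeV} with $\alpha_e=1$, $\beta_e=y-1$, where you verify the three rules by hand) and then extract \eqref{Mformula1} from Theorem~\ref{thm:Vspanning} via the same exponent bookkeeping $|A|+k(G|_A)-|V(G)|=|A|-r_G(A)$.
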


\begin{proof}
Consider $f(G;\mathbf{z},y)={(y-1)^{|V(G)|}}M_{(G, \mathsf{m})}(\frac{\mathbf{z}}{y-1},y)$ where $\frac{\mathbf{z}}{y-1}$ means that we replace each variable $z_{i,k}$ by $\frac{z_{i,k}}{y-1}$. By the deletion-contraction applied to $M$, we may apply Theorem~\ref{recipeV} with $\alpha_e=1$, $\beta_e=y-1$. This ensures that $M$ is well-defined and satisfies
\[ \V_G(\mathbf{z},\gamma_e=(y-1)) = (y-1)^{|V(G)|} M_{(G,\mathsf{m})}\left(\frac{\mathbf{z}}{y-1},y\right).\]
By a change of variables we get
\[ M_{(G,\mathsf{m})}(\mathbf{z},y)=(y-1)^{-|V(G)|}\V_G((y-1)\mathbf{z},\gamma_e=(y-1)).\]
Then, combining the last equation with Theorem~\ref{thm:Vspanning}  yields 
\[M_{(G,\mathsf{m})}(\mathbf{z},y)=(y-1)^{-|V(G)|}\sum_{A\subseteq E(G)} (y-1)^{k(A)}\mathbf{z}_{\lambda(G,\mathsf{m},A)}(y-1)^{|A|},
\]
from which Equation \eqref{Mformula1} follows and thus this finishes the proof.
\end{proof}

The states model representation of the $M$-polynomial in Proposition ~\ref{prop:smr} implies that it is \emph{multiplicative} over connected components, that is, if $(G_1,\mathsf{m}_1)$ and $(G_2,\mathsf{m}_2)$ are two marked graphs, then \[ M_{(G_1,\mathsf{m}_1)\sqcup(G_2,\mathsf{m}_2)}(\mathbf{z},y)=M_{(G_1,\mathsf{m}_1)}(\mathbf{z},y)M_{(G_2,\mathsf{m}_2)}(\mathbf{z},y).\]

The $M$-polynomial has another states model representation analogous to 
\cite[Theorem 4.7]{noble99weighted} that is given in terms of the bond 
lattice of $G$ and the Tutte polynomial of $G$. Given a graph $G$, a set 
partition $\pi=\{V_1,V_2,\ldots,V_k\}$ of $V(G)$ is \emph{connected} if 
each induced subgraph $G[V_i]$ is connected. The \defn{bond lattice} $L_G$ 
of $G$ is the set of all connected partitions of $G$, partially ordered by 
refinement. It is well-known that $L_G$ is a geometric lattice with the 
rank of each $\pi$ being {given by $|V(G)|-|\pi|$, where 
$|\pi|$ denotes the number of blocks of $\pi$, see  \cite[Theorem 1.1 and 
2.6]{stanley95symmetric} and also \cite{thatte2020connected}}.
The \emph{Tutte polynomial} of $G$ is the  polynomial defined by
\[T_G(x,y)=\sum_{A\subseteq E}(x-1)^{r(E)-r_G(A)}(y-1)^{|A|-r_G(A)}.\]

\begin{prop}
\label{prop:42}
The $M$-polynomial of $(G,\mathsf{m})$ admits the following representation: 
\[M_{(G,\mathsf{m})}(\mathbf{z},y) = \sum_{\pi\in L_G}{z}_{\mathsf{m}(V_1)}{z}_{\mathsf{m}(V_2)}\cdots {z}_{\mathsf{m}(V_k)} T_{G_1}(1,y)T_{G_2}(1,y)\cdots T_{G_k}(1,y),
\]
where $\pi=\{V_1,V_2,\ldots,V_k\}$ and $G_i=G[V_i]$ for each $i$ in $\{1,\ldots,k\}$.
\end{prop}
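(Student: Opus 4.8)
The plan is to deduce the identity directly from the states model representation of Proposition~\ref{prop:smr} by regrouping the sum over spanning subgraphs according to the connected partition it induces. For $A\subseteq E(G)$, let $\pi(A)=\{V_1,\dots,V_k\}$ be the partition of $V(G)$ into the vertex sets of the connected components of the spanning subgraph $G|_A$. Since each $G[V_i]$ contains the connected graph $(G|_A)[V_i]$ on the same vertex set, $\pi(A)$ is a connected partition, so $\pi(A)\in L_G$; conversely every $\pi\in L_G$ is of this form (take $A$ a union of spanning trees of the $G[V_i]$). The first observation is that the monomial $\mathbf{z}_{\lambda(G,\mathsf{m},A)}$ depends only on $\pi(A)$: by definition it equals $z_{\mathsf{m}(V_1)}z_{\mathsf{m}(V_2)}\cdots z_{\mathsf{m}(V_k)}$, where $V_1,\dots,V_k$ are the blocks of $\pi(A)$ and $\mathsf{m}(V_i)$ is the total mark of $V_i$. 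Grouping the sum in Proposition~\ref{prop:smr} by the value of $\pi(A)$ therefore gives
\[M_{(G,\mathsf{m})}(\mathbf{z},y)=\sum_{\pi\in L_G} z_{\mathsf{m}(V_1)}\cdots z_{\mathsf{m}(V_k)}\Biggl(\sum_{A:\,\pi(A)=\pi}(y-1)^{|A|-r_G(A)}\Biggr).\]

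The next step is to factor the inner sum over the blocks of a fixed $\pi=\{V_1,\dots,V_k\}$, writing $G_i:=G[V_i]$. A subset $A\subseteq E(G)$ satisfies $\pi(A)=\pi$ precisely when $A$ contains no edge joining two distinct blocks and, for each $i$, the restriction $A_i:=A\cap E(G_i)$ spans $G_i$ connectedly; such an $A$ is exactly a choice of one connected spanning subgraph $A_i$ in each $G_i$. For such $A$ we have $|A|=\sum_i|A_i|$, and since each $G_i|_{A_i}$ is connected, $r_G(A)=\sum_i r_{G_i}(A_i)=\sum_i(|V_i|-1)$, using the rank formula \eqref{rankDef}; hence $|A|-r_G(A)=\sum_i\bigl(|A_i|-r_{G_i}(A_i)\bigr)$, and the inner sum factors as
\[\sum_{A:\,\pi(A)=\pi}(y-1)^{|A|-r_G(A)}=\prod_{i=1}^{k}\ \sum_{\substack{A_i\subseteq E(G_i)\\ G_i|_{A_i}\text{ connected}}}(y-1)^{|A_i|-r_{G_i}(A_i)}.\]

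Finally I would identify each factor with $T_{G_i}(1,y)$. In the defining expansion $T_{G_i}(x,y)=\sum_{A_i\subseteq E(G_i)}(x-1)^{r_{G_i}(E(G_i))-r_{G_i}(A_i)}(y-1)^{|A_i|-r_{G_i}(A_i)}$, setting $x=1$ turns the first factor into $0^{\,r_{G_i}(E(G_i))-r_{G_i}(A_i)}$ (with the usual convention $0^0=1$), which vanishes unless $r_{G_i}(A_i)=r_{G_i}(E(G_i))$; because $\pi\in L_G$ the graph $G_i$ is connected, and this condition says exactly that $G_i|_{A_i}$ is connected. Thus $T_{G_i}(1,y)=\sum_{A_i:\,G_i|_{A_i}\text{ connected}}(y-1)^{|A_i|-r_{G_i}(A_i)}$, and substituting this back yields the asserted formula
\[M_{(G,\mathsf{m})}(\mathbf{z},y)=\sum_{\pi\in L_G} z_{\mathsf{m}(V_1)}\cdots z_{\mathsf{m}(V_k)}\,T_{G_1}(1,y)\cdots T_{G_k}(1,y).\]

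I do not anticipate a genuine obstacle: the proof is a bookkeeping regrouping of the states model of Proposition~\ref{prop:smr}. The only points requiring a little care are the additivity $r_G(A)=\sum_i r_{G_i}(A_i)$ (which uses precisely that $\pi(A)=\pi$, so each block is connected under $A$) and the $0^0=1$ convention in evaluating the Tutte polynomial at $x=1$; loops cause no difficulty since a loop present in $A$ (respectively in $A_i$) contributes the same factor $(y-1)$ on both sides. If one wishes to bypass the $x=1$ substitution entirely, one may instead invoke the standard fact that, for a connected graph $H$, $T_H(1,y)$ is the generating function $\sum_{A\subseteq E(H)}(y-1)^{|A|-r_H(A)}$ over connected spanning subgraphs of $H$, applied to each $G_i$.
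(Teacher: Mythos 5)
Your proof is correct, and it takes the same route the paper intends: the paper's ``proof'' is only a one-line reference to the analogous argument in Noble and Welsh's Theorem 4.7, which is precisely the regrouping of the states model of Proposition~\ref{prop:smr} by the induced connected partition, together with the identification of $T_{H}(1,y)$ as the nullity generating function over connected spanning subgraphs. Your writeup simply supplies in full the details (additivity of the rank over blocks, surjectivity onto $L_G$, the $0^0=1$ convention) that the paper delegates to the citation, and all of these steps check out.
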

\begin{proof}
The proof is analogous to the proof of \cite[Theorem 4.7]{noble99weighted} since the $M$-polynomial has a similar definition as the $W$-polynomial but with marks instead of weights.
\end{proof}
{Note that if $G$ is a forest, then the representations given by Proposition~\ref{prop:smr} and Proposition~\ref{prop:42} agree. This follows from the fact that for forests,  $L_G$ is isomorphic to the lattice of subsets of edges and that the Tutte polynomial evaluated at $(1,y)$ equals $1$.}

If $(G,\mathsf{m})$ is a marked graph, then we let 
$\omega_{\mathsf{m}}:V(G)\rightarrow \PP$ be the weight function obtained by \emph{forgetting the dots} in $\mathsf{m}$. This means that for every $v\in V(G)$ we set \defn{$\omega_\mathsf{m}(v):=w$} where $\mathsf{m}(v)=(w,d)$. 
Conversely, we regard a weighted graph $(G,\omega)$ as a marked graph $(G,\mathsf{m}_\omega)$ by setting \defn{$\mathsf{m}_\omega(v):=(\omega(v),0)$} for each vertex $v$.  Similarly, a graph $G$ can be seen as a marked graph $(G, \mathbbm{1})$ where $\mathbbm{1}(v) = (1,0)$ for each vertex $v$.

Our next result, which follows from Proposition~\ref{prop:smr} and Theorem~\ref{teoUtoX},  shows that we can recover the $W$-polynomial from the $M$-polynomial.
\begin{prop}
\label{prop:WfromM}
Let $(G,\omega)$ be a weighted graph. Then
\[ W_{(G,\omega)}(z_w,y)=M_{(G,\mathsf{m}_\omega)}(z_{w,k}=z_w,y),\]
where $z_{w,k}=z_w$ means that we substitute each variable $z_{w,k}$ by $z_w$.
\end{prop}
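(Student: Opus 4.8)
The plan is to compare the states-model representations of the two polynomials and observe that the substitution $z_{w,k}=z_w$ collapses the $\MM$-partition data into the $\PP$-partition data. First I would invoke Proposition~\ref{prop:smr} to write
\[
M_{(G,\mathsf{m}_\omega)}(\mathbf{z},y)=\sum_{A\subseteq E(G)}\mathbf{z}_{\lambda(G,\mathsf{m}_\omega,A)}(y-1)^{|A|-r_G(A)},
\]
and Theorem~\ref{teoUtoX} to write
\[
W_{(G,\omega)}(\mathbf{z},y)=\sum_{A\subseteq E(G)}\mathbf{z}_{\lambda(G,\omega,A)}(y-1)^{|A|-r_G(A)}.
\]
Since the two sums range over the same index set $A\subseteq E(G)$ and have the same $(y-1)^{|A|-r_G(A)}$ factors, it suffices to show that for each fixed $A$ the monomial $\mathbf{z}_{\lambda(G,\mathsf{m}_\omega,A)}$ becomes $\mathbf{z}_{\lambda(G,\omega,A)}$ under the substitution $z_{w,k}\mapsto z_w$.

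Next I would unpack both monomials componentwise. Fix $A$ and let $C_1,\dots,C_j$ be the connected components of $G|_A$. On the $W$ side, $\lambda(G,\omega,A)$ records the total weights $\omega(C_i)=\sum_{v\in C_i}\omega(v)$, so $\mathbf{z}_{\lambda(G,\omega,A)}=\prod_{i=1}^{j}z_{\omega(C_i)}$. On the $M$ side, $\lambda(G,\mathsf{m}_\omega,A)$ records the total marks $\mathsf{m}_\omega(C_i)$, which by the definition of the dot-sum $\dotplus$ and the fact that $\mathsf{m}_\omega(v)=(\omega(v),0)$ for all $v$ equals $\bigl(\sum_{v\in C_i}\omega(v),\ |C_i|-1\bigr)=(\omega(C_i),|C_i|-1)$; hence $\mathbf{z}_{\lambda(G,\mathsf{m}_\omega,A)}=\prod_{i=1}^{j}z_{\omega(C_i),|C_i|-1}$. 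Applying $z_{w,k}\mapsto z_w$ sends each factor $z_{\omega(C_i),|C_i|-1}$ to $z_{\omega(C_i)}$, so the $M$ monomial maps exactly to the $W$ monomial. Summing over $A$ gives the claimed identity.

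The one point requiring a small verification is the computation of the total mark of a component, i.e.\ that iterating $\dotplus$ over $|C_i|$ marks each of the form $(\omega(v),0)$ yields second coordinate $|C_i|-1$; this is an immediate induction on $|C_i|$ using $(w,d)\dotplus(w',d')=(w+w',d+d'+1)$, the base case $|C_i|=1$ giving $(\omega(v),0)$ and each additional vertex incrementing the dot count by one. I do not anticipate any real obstacle here: the result is essentially bookkeeping, and the only thing to be careful about is that the $\MM$-partition induced by $A$ is well-defined (independent of the order in which the marks of a component are combined), which follows since $(\MM,\dotplus)$ is a commutative semigroup as noted in the paragraph preceding the definition of the $M$-polynomial.
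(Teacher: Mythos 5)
Your proof is correct and follows exactly the route the paper indicates: it derives the identity by comparing the states-model representation of $M$ from Proposition~\ref{prop:smr} with that of $W$ from Theorem~\ref{teoUtoX}, the paper merely asserting this without spelling out the componentwise check that the total mark of a component $C_i$ is $(\omega(C_i),|C_i|-1)$ and hence collapses to $z_{\omega(C_i)}$ under the substitution. Your explicit verification of that bookkeeping step is a welcome addition but not a departure from the paper's argument.
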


\begin{example}
{
\label{ex:M-poly2}
Consider the following weighted graph $(G,\omega)$: 
\begin{mygraph}[baseline=-20pt,thick,scale=0.6]
\draw (0,-1) node[label=below:$4$] {} --   (1.2,-1)
             node[label=below:$1$] {} -- (2.5,-1) 
             node[label=below:$2$] {};
\end{mygraph}. Then 
\[
(G,\mathsf{m}_{ \omega}) = 
\begin{mygraph}[baseline=-20pt,thick,scale=0.6]
\draw (0,-1) node[label=below:${(4,0)}$] {} --   (1.3,-1)
             node[label=below:${(1,0)}$] {} -- (2.6,-1) 
             node[label=below:${(2,0)}$] {};
\end{mygraph}.
\]}

Proposition~\ref{prop:smr} allows us to compute the $M$-polynomial of $(G,\mathsf{m}_{\omega})$. Indeed, the Boolean lattice of $E(G)$ is
\begin{center}
\begin{mygraph}[baseline=-20pt,thick,scale=0.6]
\newcommand{\xx}{0}\newcommand{\yy}{0}
\draw[ultra thin]  (5,0.7) node[fill=none,draw=none]{} -- 
       (1.4,-0.6) node[fill=none,draw=none]{} -- (-4+1.2,0.7);
\draw[ultra thin]  (5,2.5) node[fill=none,draw=none]{} -- 
       (1.4,3.7) node[fill=none,draw=none]{} -- (-4+1.2,2.5);
\draw (0+\xx,-1+\yy) node[label=below:${(4,0)}$] {}  (1.5+\xx,-1+\yy)
                   node[label=below:${(1,0)}$] {}  (2.9+\xx,-1+\yy) 
                   node[label=below:${(2,0)}$] {};
\renewcommand{\xx}{-4}\renewcommand{\yy}{3}
\draw (0+\xx,-1+\yy) node[label=below:${(4,0)}$] {}  (1.4+\xx,-1+\yy)
                   node[label=below:${(1,0)}$] {} -- (2.8+\xx,-1+\yy) 
                   node[label=below:${(2,0)}$] {};
\renewcommand{\xx}{4}\renewcommand{\yy}{3}
\draw (0+\xx,-1+\yy) node[label=below:${(4,0)}$] {} -- (1.4+\xx,-1+\yy)
                   node[label=below:${(1,0)}$] {} (2.8+\xx,-1+\yy) 
                   node[label=below:${(2,0)}$] {};
\renewcommand{\xx}{0}\renewcommand{\yy}{6}                   
\draw (0+\xx,-1+\yy) node[label=below:${(4,0)}$] {} -- (1.4+\xx,-1+\yy)
                   node[label=below:${(1,0)}$] {} -- (2.8+\xx,-1+\yy) 
                   node[label=below:${(2,0)}$] {};
\end{mygraph}
\end{center}
Thus, applying \eqref{Mformula1} yields
\[M_{(G,\mathsf{m}_{\omega})}(\mathbf{z},y)=z_{4,0}z_{1,0}z_{2,0}+z_{4,0}z_{3,1}+z_{5,1}z_{2,0}+z_{7,2}.
\]
Finally note that Proposition~\ref{prop:WfromM}  applied to this example yields
\[ W_{(G,\omega)}(\mathbf{z},y) = z_4z_1z_2+z_4z_3+z_5z_2+z_7.\]

\end{example}

For the next result, we extend the near-contraction operation to marked graphs. Let $(G,\mathsf{m})$ be a marked graph and $e$ be a non-loop edge of $G$. Let $u,v$ be the endpoints of $e$. Then $(G,\mathsf{m})\odot e$ is the graph obtained by first contracting $e$ into the contracted vertex $v_e$, and then attaching a leaf $v'$. The mark of $v'$ is $(1,0)$, the mark of $v_e$ is $(w_u+w_v-1,d_u+d_v)$ where $(w_u,d_u)$ and $(w_v,d_v)$ are the marks of $u$ and $v$ respectively. The marks of the other vertices remain unchanged. The new pendant edge $v'v_e$ is denoted $\ell_e$.
\begin{prop}
\label{prop:Mnearcontract}
For every marked graph $(G,\mathsf{m})$ and $e$ a non-loop edge of $G$, we have
\[
M_{(G/e,\mathsf{m}/e)}(\mathbf{z},y) = M_{(G,\mathsf{m})\odot e}(\mathbf{z},y) - M_{((G,\mathsf{m})\odot e)\setminus \ell_e}(\mathbf{z},y).\]
Moreover, the $M$-polynomial satisfies the deletion-near-contraction formula
\[  M_{(G,\mathsf{m})}(\mathbf{z},y) = M_{(G\setminus e,\mathsf{m})}(\mathbf{z},y)- M_{(G,\mathsf{m})\odot e\setminus l_e}(\mathbf{z},y)+M_{(G,\mathsf{m})\odot e}(\mathbf{z},y).\]
\end{prop}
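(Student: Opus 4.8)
The plan is to mirror the proof of Proposition~\ref{prop:near_DC}: the whole statement will follow from a single application of the deletion-contraction rule (rule c) in the definition of the $M$-polynomial) to the pendant edge $\ell_e$ of $(G,\mathsf{m})\odot e$, once we check that contracting $\ell_e$ returns $(G/e,\mathsf{m}/e)$ exactly as a marked graph.

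First I would verify this identification. Write $(w_u,d_u)$ and $(w_v,d_v)$ for the marks of the endpoints $u,v$ of $e$. In $(G,\mathsf{m})\odot e$ the contracted vertex $v_e$ carries the mark $(w_u+w_v-1,\,d_u+d_v)$ and the new leaf $v'$ carries $(1,0)$, while everything else (the underlying graph away from $\ell_e$ and all other marks) agrees with $G/e$. Since $v'$ has degree one, contracting $\ell_e=v'v_e$ creates no loop and no parallel edge; it just deletes $v'$ and $\ell_e$ and gives the merged vertex the mark $(w_u+w_v-1,\,d_u+d_v)\dotplus(1,0)=(w_u+w_v,\,d_u+d_v+1)=(w_u,d_u)\dotplus(w_v,d_v)$, which is precisely the value of $\mathsf{m}/e$ at $v_e$. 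Hence $((G,\mathsf{m})\odot e)/\ell_e=(G/e,\mathsf{m}/e)$. This is exactly the point of defining $\odot$ on marked graphs by shaving one unit off the weight and withholding one dot at $v_e$: the dot-sum then refunds both when $\ell_e$ is later contracted.

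Next I would apply rule c) to the non-loop edge $\ell_e$ of $(G,\mathsf{m})\odot e$, which gives $M_{(G,\mathsf{m})\odot e}=M_{((G,\mathsf{m})\odot e)\setminus\ell_e}+M_{((G,\mathsf{m})\odot e)/\ell_e}=M_{((G,\mathsf{m})\odot e)\setminus\ell_e}+M_{(G/e,\mathsf{m}/e)}$ by the previous step; well-definedness of the $M$-polynomial (Proposition~\ref{prop:smr}) makes this unambiguous. Rearranging yields the first displayed identity. For the deletion-near-contraction formula, I would apply rule c) to $e$ itself in $(G,\mathsf{m})$, namely $M_{(G,\mathsf{m})}=M_{(G\setminus e,\mathsf{m})}+M_{(G/e,\mathsf{m}/e)}$, and substitute the expression just obtained for $M_{(G/e,\mathsf{m}/e)}$.

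The argument is essentially bookkeeping; the only step needing care — and the only place the specific design of the marked near-contraction is used — is the mark computation showing $((G,\mathsf{m})\odot e)/\ell_e=(G/e,\mathsf{m}/e)$, together with the remark that $\ell_e$ is a genuine non-loop edge (so rule c) applies, not rule b)) and that contracting it introduces no parallel edges.
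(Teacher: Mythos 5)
Your proof is correct and follows the same route as the paper, which simply notes that the argument for Proposition~\ref{prop:near_DC} carries over because the $M$-polynomial satisfies deletion-contraction by definition. The one detail you spell out that the paper leaves implicit --- the mark computation $(w_u+w_v-1,d_u+d_v)\dotplus(1,0)=(w_u,d_u)\dotplus(w_v,d_v)$ verifying $((G,\mathsf{m})\odot e)/\ell_e=(G/e,\mathsf{m}/e)$ --- is exactly the right point to check, and you check it correctly.
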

\begin{proof}
{Since by definition the $M$-polynomial satisfies the deletion-contraction relation, this proof is the same as that for Proposition~\ref{prop:near_DC} and thus we omit it here. }
\end{proof}
\begin{remark}\label{rem:My0}
From the example in Figure~\ref{fig:Mtriangle} we can 
observe that even if $(G,\mathsf{m})$ is simple, it is not true that 
\[  M_{(G,\mathsf{m})}(\mathbf{z},y) = M_{(G\setminus e,\mathsf{m})}(\mathbf{z},y)- M_{(G,\mathsf{m})^s\odot e\setminus l_e}(\mathbf{z},y)+M_{(G,\mathsf{m})^s\odot e}(\mathbf{z},y)~,\]
because if we removed repeated edges and loops, we would not get all the terms on the right-hand side of the tree in Figure~\ref{fig:Mtriangle}.
However, it follows from the definition that if we set $y=0$ the $M$-polynomial does not distinguish between a graph and its simplification. That is, for any marked graph $(G,\mathsf{m})$,
\[M_{(G,\mathsf{m})}(\mathbf{z},y=0)=M_{(G^s,\mathsf{m})}(\mathbf{z},y=0).\]
Thus, if $y=0$, we have a deletion near-contraction formula for simple graphs.
\end{remark}

\section{The $D$-polynomial}
\label{sec:Dpoly}
In this section we introduce a specialization of the $M$-polynomial that we call the $D$-polynomial. We will prove properties of this specialization that will allow us to make connections to the star-expansion of the chromatic symmetric function.  

\begin{definition} The \defn{$D$-polynomial} of a marked graph $(G,\mathsf{m})$ is the polynomial in $\ZZ[\mathbf{z},y]$ defined by
\[D_{(G,\mathsf{m})}(\mathbf{z},y) = M_{(G,\mathsf{m})}\left(z_{w,d} = D_{\bullet_{w,d}},y\right),\]
where $\bullet_{w,d}$ is the one-vertex graph with mark $(w,d)$ and 
\begin{equation}
    \label{def:bullet}
D_{\bullet_{w,d}}=D_{\bullet_{w,d}}(\mathbf{z},y) := \sum_{i=0}^d(-1)^{i}\binom{d}{i}z_{w-i,0}z_{1,0}^{i}.
\end{equation}
Note that this definition is consistent since $M_{\bullet_{w,d}}(\mathbf{z},y)=z_{w,d}$. In particular, note that for any weight $w$,  $D_{\bullet_{w,0}}=z_{w,0}$.
\end{definition}
We call the substitution $z_{w,d}=D_{\bullet_{w,d}}$ the \defn{undotting substitution} since the result is a polynomial where all the variables $z_{w,d}$ have $d=0$. 

\begin{example}
Consider the marked graph $(G,\mathsf{m})$ of Example~\ref{ex:triangle}. Its $M$-polynomial is 
$$M_{(G,\mathsf{m})}(\mathbf{z},y) = z_{2,0}z_{4,1}z_{1,0}+z_{4,1}z_{3,1}+z_{1,0}z_{6,2}+z_{5,2}z_{2,0}+z_{7,3}(2+y).$$
Then its $D$-polynomial is given by 

\begin{align*}
D_{(G,\mathsf{m})}(\mathbf{z},y)=& D_{\bullet_{2,0}}D_{\bullet_{4,1}}D_{\bullet_{1,0}}+D_{\bullet_{4,1}}D_{\bullet_{3,1}}+D_{\bullet_{1,0}}D_{\bullet_{6,2}}+D_{\bullet_{5,2}}D_{\bullet_{2,0}}+D_{\bullet_{7,3}}(2+y)\\
= & z_{2,0}(z_{4,0}-z_{3,1}z_{1,0})z_{1,0}+(z_{4,0}-z_{3,1}z_{1,0})(z_{3,0}-z_{2,1}z_{1,0})\\
& +z_{1,0}(z_{6,0}-2z_{5,0}z_{1,0}+z_{4,0}z_{1,0}^2) + (z_{5,0}-2z_{4,0}z_{1,0}+z_{3,0}z_{1,0}^2)z_{2,0} \\
& + (z_{7,0}-3z_{6,0}z_{1,0}+3z_{5,0}z_{1,0}^2-z_{4,0}z_{1,0}^3)(2+y) \\
=& z_{3,0}z_{4,0}-z_{3,0}^2z_{1,0}  +z_{1,0}(z_{6,0}-2z_{5,0}z_{1,0}+z_{4,0}z_{1,0}^2)\\ 
&+ (z_{5,0}-2z_{4,0}z_{1,0}+z_{3,0}z_{1,0}^2)z_{2,0}\\ &+(z_{7,0}-3z_{6,0}z_{1,0}+3z_{5,0}z_{1,0}^2-z_{4,0}z_{1,0}^3)(2+y)~. \\
\end{align*}
\end{example}

Since the $D$-polynomial is a specialization of the $M$-polynomial, many results extend directly to the $D$-polynomial. 
\begin{prop}
\label{prop:dproperties}
Let $(G,\mathsf{m})$ be a marked graph and $e$ a non-loop edge of $G$.
Then the $D$-polynomial satisfies the \emph{deletion-contraction} relations:
\[ 
D_{(G,\mathsf{m})}(\mathbf{z},y) = D_{(G\setminus e,\mathsf{m})}(\mathbf{z},y) + 
D_{(G /  e,\mathsf{m}/e)}(\mathbf{z},y)
\]
and
\[
D_{(G/e,\mathsf{m}/e)}(\mathbf{z},y) = D_{(G,\mathsf{m})\odot e}(\mathbf{z},y) - D_{((G,\mathsf{m})\odot e)\setminus \ell_e}(\mathbf{z},y).\]
Moreover, the $D$-polynomial satisfies the \emph{deletion-near-contraction formula}
\[  D_{(G,\mathsf{m})}(\mathbf{z},y) = D_{(G\setminus e,\mathsf{m})}(\mathbf{z},y)- D_{(G,\mathsf{m})\odot e\setminus \ell_e}(\mathbf{z},y) +  D_{(G,\mathsf{m})\odot e}(\mathbf{z},y).\]
\end{prop}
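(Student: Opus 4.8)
The plan is to exploit the fact that, by its very definition, the $D$-polynomial is the image of the $M$-polynomial under a single fixed ring endomorphism of $\ZZ[\mathbf{z},y]$; consequently every polynomial identity among $M$-polynomials descends verbatim to the corresponding identity among $D$-polynomials, and all three displayed relations have already been proved for $M$.

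First I would introduce the ring homomorphism $\Phi\colon\ZZ[\mathbf{z},y]\to\ZZ[\mathbf{z},y]$ determined by $\Phi(y)=y$ and $\Phi(z_{w,d})=D_{\bullet_{w,d}}$ for every mark $(w,d)\in\MM$. This is well-defined because the $z_{w,d}$ are (commuting, hence algebraically independent) indeterminates and each $D_{\bullet_{w,d}}$ is an honest element of $\ZZ[\mathbf{z},y]$. The point that makes everything work, and which is immediate from the definition of the $D$-polynomial, is that $D_{(G,\mathsf{m})}=\Phi\bigl(M_{(G,\mathsf{m})}\bigr)$ for \emph{every} marked graph $(G,\mathsf{m})$, with the \emph{same} $\Phi$ independently of the graph: the undotting substitution does not see which marked graph it is being applied to.

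Each of the three identities is then obtained by applying $\Phi$ to the corresponding identity for the $M$-polynomial. The first deletion-contraction relation comes from rule (c) in the definition of the $M$-polynomial; the near-contraction relation and the deletion-near-contraction formula come from the two displayed identities in Proposition~\ref{prop:Mnearcontract}. Since $\Phi$ is additive and fixes $y$, it preserves linear relations with integer coefficients, so applying it to both sides produces exactly the asserted relations for $D$. Equivalently, one may first establish the two deletion-contraction-type relations for $D$ and then combine them exactly as in the proof of Proposition~\ref{prop:near_DC} to obtain the deletion-near-contraction formula; either route takes only a few lines.

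There is no real obstacle here beyond unwinding the definitions once. The one thing worth checking — and it is immediate — is that the marks appearing on the right-hand-side graphs are the ones actually used to define $D$ on those graphs: $\mathsf{m}/e$ on $G/e$, and the marks $(1,0)$ on the new leaf and $(w_u+w_v-1,\,d_u+d_v)$ on the contracted vertex produced by $\odot e$. This guarantees $\Phi\bigl(M_{(G/e,\mathsf{m}/e)}\bigr)=D_{(G/e,\mathsf{m}/e)}$, and likewise for $(G,\mathsf{m})\odot e$ and $((G,\mathsf{m})\odot e)\setminus\ell_e$, so that applying $\Phi$ to an $M$-identity really does give the stated $D$-identity. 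I would therefore keep the written proof short, essentially recording the homomorphism $\Phi$ and invoking Proposition~\ref{prop:Mnearcontract}.
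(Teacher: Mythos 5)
Your proposal is correct and is essentially the paper's own argument: the paper's proof simply states that the result is a direct consequence of Proposition~\ref{prop:Mnearcontract} together with the fact that the $D$-polynomial is a specialization (the undotting substitution) of the $M$-polynomial, which is exactly your homomorphism $\Phi$ spelled out in more detail.
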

\begin{proof}
This is a direct consequence of Proposition~\ref{prop:Mnearcontract} and the fact that $D$-polynomial is a specialization of the $M$-polynomial.
\end{proof}

In the process of computing the $D$-polynomial from the $M$-polynomial by doing the substitution $z_{w,d}= D_{\bullet_{w,d}}$ some cancellations may occur. We now seek to better understand these cancellations, to do this we begin with the following key definition. 

\begin{definition}
Let $(G,\mathsf{m})$ be a marked graph. A vertex $v$ is \defn{absorbable} if $v$ has degree one and mark $(1,0)$. An edge $e$ is \defn{absorbable} if it is incident to an absorbable vertex $v$. If $e=uv$ is an absorbable edge where the vertex $v$ is  absorbable and $\mathsf{m}(u)=(w,d)$, the \defn{absorption} of $e$ is the marked graph $(G/e, \mathsf{m}')$, where the marks of all vertices of $V(G/e)\setminus\{u\}$ are the same as in $G$, and $u$ has mark $\mathsf{m}'(u)=(w+1,d)$. The \defn{core} 
of $(G,\mathsf{m})$ is the marked graph obtained after absorbing all the absorbable edges of $(G,\mathsf{m})$. If $(G,\mathsf{m})$ is clear from the context, we denote its core by $(K,\mathsf{m}_K)$.
\end{definition} 

\begin{example} 
\label{ex:ejemplo8}
The graph below, $(G, \mathsf{m})$, is an unmarked graph, $\mathsf{m}= \mathbbm{1}$, with six  absorbable vertices colored red. Its core appears on the right:  
\begin{center}
\begin{mygraph}[baseline=-10pt,thick,scale=0.6]
\draw \foreach \x in {-0.5,0,0.5}
{ (\x,0) node[draw=red, fill=red] {} -- (0,-1) };
\draw \foreach \x in {1.2,2.1}
{ (\x,0) node[draw=red, fill=red] {} -- (1.7,-1)};
\draw \foreach \x in {3.5}
{ (\x,0) node[draw=red, fill=red] {} -- (3.5,-1)};
\draw (0,-1) node {} -- (1.7,-1)
             node {} -- (3.5,-1) 
             node {};
              \node[draw=none, fill=none] at (1.5,-1.8) {$(G,\mathsf{m})$};
\pgftransformshift{\pgfpoint{8cm}{0cm}}
\draw (0,-1) node[label=above:\tiny{${(4,0)}$}] {} -- (1.7,-1)
             node[label=above:\tiny{${(3,0)}$}] {} -- (3.5,-1)
             node[label=above:\tiny{${(2,0)}$}] {};
\draw (0,-1.2);
 \node[draw=none, fill=none] at (1.7,-1.8) {$(K,\mathsf{m}_K)$};
\end{mygraph}
\end{center}    
\end{example}

One of the main results of this section is that the $D$-polynomials of a marked graph and its core are equal. We first need an easy lemma.

\begin{lemma}
\label{lemma:Pascal}
For every $(w,d)\in\MM$ such that $w>1$ and $d>0$ we have
\[ D_{\bullet_{w,d}} = D_{\bullet_{w,d-1}}-z_{1,0}D_{\bullet_{w-1,d-1}}.
\]
\end{lemma}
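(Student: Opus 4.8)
The plan is to prove Lemma~\ref{lemma:Pascal} by a direct computation from the definition of $D_{\bullet_{w,d}}$ in \eqref{def:bullet}, using the Pascal recurrence $\binom{d}{i} = \binom{d-1}{i} + \binom{d-1}{i-1}$. Writing out
\[
D_{\bullet_{w,d}} = \sum_{i=0}^d (-1)^i \binom{d}{i} z_{w-i,0}\, z_{1,0}^i,
\]
I would split the binomial coefficient via Pascal's rule into two sums. The first sum, with $\binom{d-1}{i}$, ranges effectively over $i=0,\dots,d-1$ (the $i=d$ term vanishes since $\binom{d-1}{d}=0$), and this is exactly $D_{\bullet_{w,d-1}}$. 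The second sum involves $\binom{d-1}{i-1}$; reindexing with $j = i-1$ gives
\[
\sum_{j=0}^{d-1} (-1)^{j+1} \binom{d-1}{j} z_{w-1-j,0}\, z_{1,0}^{j+1} = -z_{1,0}\sum_{j=0}^{d-1}(-1)^j\binom{d-1}{j} z_{(w-1)-j,0}\, z_{1,0}^j = -z_{1,0}\, D_{\bullet_{w-1,d-1}}.
\]
Combining the two pieces yields $D_{\bullet_{w,d}} = D_{\bullet_{w,d-1}} - z_{1,0}\, D_{\bullet_{w-1,d-1}}$, as claimed.

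I should check that the hypotheses $w>1$ and $d>0$ are exactly what is needed for the expressions on the right-hand side to be well-defined: $d>0$ guarantees that $\binom{d}{i}$ can be expanded via Pascal and that $\bullet_{w,d-1}$ is a legitimate mark (i.e., $w \geq (d-1)+1 = d$, which follows from $w \geq d+1$); and $w>1$ together with $d>0$ ensures $w-1 \geq d \geq (d-1)+1$, so $(w-1,d-1)\in\MM$ and $\bullet_{w-1,d-1}$ makes sense. These are bookkeeping checks rather than real obstacles. The only genuinely delicate point is handling the boundary terms of the two sums correctly — namely that the $i=d$ term of the first sum drops out and that the reindexed second sum starts cleanly at $j=0$ — but this is routine once one is careful with the index ranges. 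There is no substantive obstacle here; the lemma is a purely formal identity in $\ZZ[\mathbf{z},y]$ (indeed in $\ZZ[\mathbf{z}]$, as $y$ does not appear in $D_{\bullet_{w,d}}$), and the proof is a one-line application of Pascal's identity plus reindexing.
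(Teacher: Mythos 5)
Your proof is correct and is essentially the same argument as the paper's: both rest on Pascal's identity $\binom{d}{i}=\binom{d-1}{i}+\binom{d-1}{i-1}$ together with a careful reindexing of the boundary terms, the only difference being that you split the left-hand side while the paper combines the two sums on the right-hand side. Your additional check that $(w,d-1)$ and $(w-1,d-1)$ are legitimate marks under the hypotheses $w>1$, $d>0$ is a nice touch the paper leaves implicit.
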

\begin{proof}
The proof follows from Equation \eqref{def:bullet} and Pascal's identity. Indeed,
\begin{align*}
D_{\bullet_{w,d-1}}(\mathbf{z})
-z_{1,0} D_{\bullet_{w-1,d-1}}(\mathbf{z})
=& \sum_{i=0}^{d-1}(-1)^{i}\binom{d-1}{i}z_{w-i,0}z_{1,0}^i
-
z_{1,0}\sum_{i=0}^{d-1}(-1)^{i}\binom{d-1}{i}z_{w-1-i,0}z_{1,0}^i
\\
= & \binom{d-1}{0}z_{w,0} + \sum_{i=1}^{d-1}(-1)^{i}\left(\binom{d-1}{i}+\binom{d-1}{i-1}\right)
z_{w-i,0}z_{1,0}^i \\
& - (-1)^{d-1}\binom{d-1}{d-1}z_{w-d,0}z_{i,0}^d
\\
=& \sum_{i=0}^d(-1)^{i}\binom{d}{i}
z_{w-i,0}z_{1,0}^i,
\end{align*}
which is equal to $D_{\bullet_{w,d}}(\mathbf{z})$ again by Equation \eqref{def:bullet}.
\end{proof}

\begin{example}
 Let  $(G,\mathsf{m})$ be an edge whose endpoints have marks $(w-1,d-1)$ and $(1,0)$ so the total mark of $(G,\mathsf{m})$ is $(w,d)$.
 By Proposition \ref{prop:smr}, we have $D_{(G,\mathsf{m})}=z_{1,0}D_{\bullet_{w-1,d-1}}+D_{\bullet_{w,d}}$. That is, before cancellations $D_{(G,\mathsf{m})}$ has $3\cdot 2^{d-1}$ many terms. But by Lemma \ref{lemma:Pascal}, $D_{(G,\mathsf{m})}=D_{\bullet_{w,d-1}}$, which has $2^{d-1}$ terms. This means that there are $2^d$ many terms that cancel out and that by absorbing the leaf with mark $(1,0)$ we can avoid these cancellations.
\end{example}
In general, we have the following result.

\begin{theorem} \label{thm:Dcore}
Let $(G,\mathsf{m})$ be a marked graph and $(K,\mathsf{m}_K)$ be its core.
Then
\[D_{(G,\mathsf{m})}(\mathbf{z},y) = D_{(K,\mathsf{m}_K)}(\mathbf{z},y).\]
\end{theorem}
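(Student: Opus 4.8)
The plan is to prove the statement by induction on the number of absorbable edges of $(G,\mathsf{m})$, reducing in each step to a single absorption. If $(G,\mathsf{m})$ has no absorbable edge, then it equals its own core and there is nothing to prove. For the inductive step, it suffices to show that if $e = uv$ is an absorbable edge with $v$ absorbable and $\mathsf{m}(u) = (w,d)$, and $(G',\mathsf{m}')$ denotes the absorption of $e$ (so $u$ acquires mark $(w+1,d)$), then $D_{(G,\mathsf{m})}(\mathbf{z},y) = D_{(G',\mathsf{m}')}(\mathbf{z},y)$; the general statement then follows because the core is obtained by finitely many absorptions and, as one checks, a single absorption decreases the number of absorbable edges of the graph.

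To prove the single-step claim, I would use the deletion-contraction relation for the $D$-polynomial from Proposition~\ref{prop:dproperties}, applied to the edge $e$. Since $v$ is a leaf of degree one, deleting $e$ produces a graph $(G\setminus e,\mathsf{m})$ which is $(G'',\mathsf{m})\sqcup \bullet_{1,0}$ where $(G'',\mathsf{m})$ is the marked graph on $V(G)\setminus\{v\}$ obtained by deleting both $e$ and $v$; by multiplicativity of the $D$-polynomial over connected components (inherited from the same property of the $M$-polynomial via Proposition~\ref{prop:smr}), $D_{(G\setminus e,\mathsf{m})} = z_{1,0}\,D_{(G'',\mathsf{m})}$. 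On the other hand, contracting $e$ identifies $u$ and $v$ into a single vertex whose mark is $\mathsf{m}(u)\dotplus\mathsf{m}(v) = (w,d)\dotplus(1,0) = (w+1,d+1)$; that is, $(G/e,\mathsf{m}/e)$ is the graph $(G'',\mathsf{m})$ with the mark of $u$ changed from $(w,d)$ to $(w+1,d+1)$. Meanwhile $(G',\mathsf{m}')$ is $(G'',\mathsf{m})$ with the mark of $u$ changed to $(w+1,d)$.

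So the claim reduces to: for the fixed underlying graph $G''$ and the fixed marks on $V(G'')\setminus\{u\}$, changing $\mathsf{m}(u)$ to $(w+1,d)$ versus to $(w+1,d+1)$ and versus keeping $(w,d)$ (with an extra factor $z_{1,0}$) are related by $D_{(w+1,d)\text{ at }u} = z_{1,0}D_{(w,d)\text{ at }u} + D_{(w+1,d+1)\text{ at }u}$. But by the definition of the $D$-polynomial as the undotting substitution into the $M$-polynomial, and since $M$ is multiplicative and linear in the variable $z_{\mathsf{m}(u),\ast}$ attached to $u$ in its states-model expansion (Proposition~\ref{prop:smr}, grouping subsets $A$ by whether each edge at $u$ is in $A$ — more cleanly: $M$ is a polynomial that is linear in the single indeterminate $z_{(w,d)}$ when we think of the mark at $u$ as a free parameter, because no two vertices of a loopless graph can be merged without contracting), the $D$-polynomial depends on $\mathsf{m}(u)$ only through the substituted value $D_{\bullet_{\mathsf{m}(u)}}$. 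Hence the desired identity is exactly $D_{\bullet_{w+1,d}} = z_{1,0}D_{\bullet_{w,d}} + D_{\bullet_{w+1,d+1}}$, which is Lemma~\ref{lemma:Pascal} applied with parameters $(w+1,d+1)$ in place of $(w,d)$, rearranged.

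The main obstacle is making precise and rigorous the claim that $D_{(G,\mathsf{m})}$ depends on the mark of a fixed vertex $u$ only through the single polynomial $D_{\bullet_{\mathsf{m}(u)}}$ substituted for one indeterminate — equivalently, that $M_{(G,\mathsf{m})}$ is a $\ZZ[y]$-linear function of the single variable $z_{\mathsf{m}(u)}$ indexed by the mark at $u$. This follows cleanly from the states-model representation in Proposition~\ref{prop:smr}: in $M_{(G,\mathsf{m})} = \sum_{A}\mathbf{z}_{\lambda(G,\mathsf{m},A)}(y-1)^{|A|-r_G(A)}$, the vertex $u$ lies in exactly one connected component of each $G|_A$, and the index recording that component's total mark is the only place the mark of $u$ enters; grouping by $A$, each monomial $\mathbf{z}_{\lambda(G,\mathsf{m},A)}$ is linear in the indeterminate corresponding to the block containing $u$, but that block's total mark is $\mathsf{m}(u)$ dot-summed with the (fixed) marks of the other vertices in the block — so as a function of the formal symbol $\mathsf{m}(u)$, $M$ is a sum of terms each linear in "$z$ at $(\text{mark of }u)\dotplus(\text{fixed stuff})$". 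One then invokes the fact that the undotting substitution is compatible with the dot-sum in the precise sense needed (or, to sidestep all of this, simply run the deletion-contraction argument above, which never requires the linearity claim in that generality and only uses Lemma~\ref{lemma:Pascal} plus multiplicativity over components). I would present the deletion-contraction version as the clean proof and relegate the structural remark to a sentence.
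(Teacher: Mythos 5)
Your overall strategy coincides with the paper's: induct on the number of absorbable edges, reduce to a single absorption, and settle the one-absorption case by combining the deletion--contraction relation for $D$ (Proposition~\ref{prop:dproperties}) with Lemma~\ref{lemma:Pascal}. The identity you reduce to, namely $D_{(G',\mathsf{m}')}=z_{1,0}D_{(G'',\mathsf{m})}+D_{(G/e,\mathsf{m}/e)}$ after peeling off the isolated vertex by multiplicativity, is exactly the paper's Equation~\eqref{lem:absortion} rearranged. So this is not a different route; the only question is whether your justification of that identity is complete.

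It is not quite, and the weak spot is precisely the one you flag. The assertion that $D_{(G,\mathsf{m})}$ depends on $\mathsf{m}(u)$ ``only through the substituted value $D_{\bullet_{\mathsf{m}(u)}}$'' is false as stated: in the states-model expansion of Proposition~\ref{prop:smr} the mark of $u$ is first dot-summed with the marks of the other vertices in its component of $G|_A$, and the undotting substitution is applied to that \emph{total} mark, so the dependence is through $D_{\bullet_{\mathsf{m}(u)\dotplus s}}$ for varying $s$, not through $D_{\bullet_{\mathsf{m}(u)}}$ alone. Your parenthetical escape --- that the deletion--contraction version ``never requires the linearity claim \dots and only uses Lemma~\ref{lemma:Pascal} plus multiplicativity over components'' --- does not hold either: the three marked graphs in your final identity are genuine graphs on $V(G)\setminus\{v\}$, not single vertices, so Lemma~\ref{lemma:Pascal} does not apply to them directly, and multiplicativity only handles the isolated vertex. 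What is actually needed, and what the paper's proof supplies, is the term-by-term check that for every $A\subseteq E(G/e)$ the total marks of the component containing $u$ in the three graphs form another Pascal triple: if $s=(w',d')$ is the dot-sum of the remaining marks in that component, then $(w+1,d)\dotplus s$, $(w,d)\dotplus s$, $(w+1,d+1)\dotplus s$ equal $(W,D-1)$, $(W-1,D-1)$, $(W,D)$ with $W=w+w'+1$, $D=d+d'+2$ (and equal $(w+1,d)$, $(w,d)$, $(w+1,d+1)$, again of this form, when $u$ is isolated in $G|_A$), so that Lemma~\ref{lemma:Pascal} applies to each summand; the $(y-1)$-exponents agree because the three graphs share the same edge sets and ranks. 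This is a one-line verification, and once you insert it your argument is complete and coincides in substance with the paper's.
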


\begin{proof}
We proceed by induction on the number of absorbable edges of $(G,\mathsf{m})$. If $(G,\mathsf{m})$ does not have absorbable edges, the assertion is clear.
Now we assume the assertion is true for all marked graphs  with $k-1$ absorbable edges and suppose that $(G,\mathsf{m})$ has $k$ absorbable edges. 
Let $e$ be an absorbable edge of $(G,\mathsf{m})$. We will apply Lemma~\ref{lemma:Pascal} to prove 
\begin{equation}
    \label{lem:absortion}
D_{(G/e,\mathsf{m}/e)}(\mathbf{z},y)=D_{(G',\mathsf{m}')}(\mathbf{z},y)-D_{(G\setminus e,\mathsf{m})}(\mathbf{z},y);
\end{equation}
where $(G',\mathsf{m}')$ is the marked graph resulting from absorbing $e$. Indeed, by Equation \eqref{Mformula1}, we have 
\[ D_{(G/e,\mathsf{m}/e)}(\mathbf{z},y)=\sum_{A\subseteq E(G/e)} D_{\bullet_{w_1,d_1}}D_{\bullet_{w_2,d_2}}\cdots D_{\bullet_{w_{l},d_l}}(y-1)^{|A|-k_{G/e}(A)+|V(G/e)|},\]
where the $(w_i,d_i)$'s are the total marks of the connected components of $(G/e|_A,\mathsf{m}/e)$. 

We may assume without loss of generality that $(w_1,d_1)$ is the total mark of the connected component containing the contracted vertex $v_e$, and hence $w_1>1$ and $d_1>0$.  Thus, applying Lemma~\ref{lemma:Pascal} we get
\begin{align}
\label{proof:absorption}
    D_{(G/e,\mathsf{m}/e)}(\mathbf{z},y)=&
    \sum_{A\subseteq E(G/e)} D_{\bullet_{w_1,d_1-1}}D_{\bullet_{w_2,d_2}}\cdots D_{\bullet_{w_{l},d_l}}(y-1)^{|A|-k_{G/e}(A)+|V(G/e)|} \nonumber \\
   &-
        \sum_{A\subseteq E(G/e)} D_{\bullet_{1,0}}D_{\bullet_{w_1-1,d_1-1}}D_{\bullet_{w_2,d_2}}\cdots D_{\bullet_{w_{l},d_l}}(y-1)^{|A|-k_{G/e}(A)+|V(G/e)|}.
\end{align}

On one hand, by definition of absorption we can see that the first sum in the right-hand side of the last equation equals $D_{(G',\mathsf{m}')}(\mathbf{z},y)$. On the other hand, $G/e\sqcup\bullet$
is isomorphic to $G\setminus e$ since $e$ is a pendant edge, where the isomorphism sends the $\bullet$ vertex to the absorbable endpoint of $e$, the contracted vertex to the other endpoint of $e$ and each other vertex to itself. Moreover, this isomorphism induces a mark-isomorphism between 
$(G/e,\mathsf{m}'')\sqcup \bullet_{1,0}$ where $\mathsf{m}''$ gives mark $(w_1-1,d_1-1)$ to the contracted vertex and leaves all the other marks unchanged (from $\mathsf{m}/e$). Finally, observe that the second sum in the right-hand side of Equation \eqref{proof:absorption} is the $D$-polynomial of $(G/e,\mathsf{m}'')\sqcup \bullet_{1,0}$. Combining all these observations we obtain \eqref{lem:absortion}.
On the other hand, by deletion-contraction, we have $D_{(G,\mathsf{m})}(\mathbf{z},y) = D_{(G\setminus e,\mathsf{m})}(\mathbf{z},y)+D_{(G/e,\mathsf{m}/e)}(\mathbf{z},y)$. Combining this with \eqref{lem:absortion} yields that $D_{(G,\mathsf{m})}(\mathbf{z},y)=D_{(G',\mathsf{m}')}(\mathbf{z},y)$. By the induction hypothesis, $D_{(G',\mathsf{m}')}(\mathbf{z},y)=D_{(K',\mathsf{m}_{K'})}(\mathbf{z},y)$. The conclusion follows after observing that the cores of $(G,\mathsf{m})$ and $(G',\mathsf{m}')$ are mark-isomorphic.
\end{proof}
\begin{example}
Let us consider the unmarked graph $(G,\mathbbm{1})$ from Example~\ref{ex:ejemplo8} and its core $(K,\mathsf{m}_K)$. Theorem~\ref{thm:Dcore} allows for a large simplification in the computation of the $D$-polynomial of $(G,\mathbbm{1})$ by computing the $M$-polynomial of its core and then applying the undotting substitution. Indeed, if we compute the $M$-polynomial of $(G,\mathbbm{1})$ by its states model representation, we will have $64$ terms before the substitution $z_{w,k}= D_{\bullet_{w,k}}$. Instead, the states model representation of the $M$-polynomial of $(K,\mathsf{m}_K)$
has only $4$ terms. After performing the substitution $z_{w,k}= D_{\bullet_{w,k}}$ we get 
\[
D_{(G,\mathbbm{1})}(\mathbf{z},y)
=z_{4,0}z_{3,0}z_{2,0}+z_{5,0}z_{4,0}-z_{4,0}^2z_{1,0}+ z_{7,0}z_{2,0}-z_{6,0}z_{2,0}z_{1,0} + z_{9,0}-2z_{8,0}z_{1,0} + z_{7,0}z_{1,0}^2.
\]

\end{example}

Theorem \ref{thm:Dcore} implies that we will avoid many cancellations in the computation of the $D$-polynomial of a marked graph by first obtaining its core. However, some cancellations may still occur at some point if the core has some vertices with mark $(1,0)$. This motivates the following definition.

\begin{definition}
Consider $\MM^\circ :=\{(w,d)\in \MM\mid w>d+1\}$. We call marks in $\MM^\circ$ \defn{strict}. We say that a marked graph $(G,\mathsf{m})$ is \defn{strictly marked} if the mark of every vertex belongs to $\MM^\circ$. In particular, no vertex has mark $(1,0)$. We also say that $(G,\omega)$ is \defn{strictly weighted} if all weights are larger than $1$. Clearly, if $(G,\omega)$ is strictly weighted, it is also strictly marked when being considered as a marked graph.
\end{definition}
Observe that the dot-sum of two marks is strict if at least one of the marks is strict. It follows that $(\MM^\circ,\dotplus)$ is also a semigroup and that the total mark of a given set $U$ of vertices is strict if there is a vertex $v$ in $U$ with a strict mark. It follows that if $(G,\mathsf{m})$ is strictly marked, then all the variables appearing in the states model representation of $M_{(G,\mathsf{m})}(\mathbf{z})$ come from marks in $\MM^\circ$. Moreover, we have the following result.
\begin{prop}\label{prop:MDproper}
Let $(T,\mathsf{m})$ be a marked tree. The substitution $z_{w,d} = D_{\bullet_{w,d}}$ in the expression of the $M$-polynomial of $(T,\mathsf{m})$ given by Proposition~\ref{prop:smr} yields a cancellation-free expression for the $D$-polynomial of $(T,\mathsf{m})$ if and only if $(T,\mathsf{m})$ is strictly marked. 
\end{prop}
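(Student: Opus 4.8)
The plan is to convert the $M$-to-$D$ substitution into a purely combinatorial bookkeeping problem and then read the sign of each monomial off the monomial itself. Since $T$ is a tree, every $A\subseteq E(T)$ spans a forest of subtrees, so $|A|=r_T(A)$ and the formula of Proposition~\ref{prop:smr} collapses to
\[
M_{(T,\mathsf{m})}(\mathbf{z},y)=\sum_{A\subseteq E(T)}\ \prod_{C}\, z_{\mathsf{m}(C)},
\]
the product running over the connected components $C$ of $T|_A$, with $\mathsf{m}(C)=(w_C,d_C)$, $w_C=\sum_{v\in C}w_v$ and $d_C=\sum_{v\in C}d_v+(|C|-1)$. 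First I would fix a faithful index for the terms obtained after substituting $z_{w,d}=D_{\bullet_{w,d}}=\sum_{i=0}^{d}(-1)^i\binom{d}{i}z_{w-i,0}z_{1,0}^{i}$ and distributing: for each component $C$ introduce its \emph{dot-edge set} $\Omega_C=D(C)\sqcup E(C)$, where $D(C)$ holds one token per dot of each vertex of $C$ and $E(C)$ is the edge set of the subtree $C$; since $|\Omega_C|=d_C$, expanding the factor $D_{\bullet_{w_C,d_C}}$ amounts to choosing a subset $S_C\subseteq\Omega_C$, the corresponding term being $(-1)^{|S_C|}z_{w_C-|S_C|,0}z_{1,0}^{|S_C|}$. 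Because $\bigsqcup_C E(C)=A$ and $\bigsqcup_C D(C)=D(T)$, the full dot set of $T$, the component choices are independent, and a single distributed term is indexed by a pair $(A,S)$ with $S\subseteq D(T)\sqcup A$; writing $S_C=S\cap\Omega_C$, it carries sign $(-1)^{|S|}$ and monomial $\mu(A,S)=z_{1,0}^{|S|}\prod_{C}z_{w_C-|S_C|,0}$. In this language ``cancellation-free'' means exactly: $\mu(A,S)=\mu(A',S')$ forces $(-1)^{|S|}=(-1)^{|S'|}$.

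For the direction ``strictly marked $\Rightarrow$ cancellation-free'' the crux is the inequality $w_C-|S_C|\ge w_C-d_C\ge 2$, valid for every $C$ and $S_C$ when all marks are strict, which comes from
\[
w_C-d_C=\sum_{v\in C}(w_v-d_v-1)+1\ \ge\ |C|+1\ \ge\ 2,
\]
using $w_v\ge d_v+2$. Thus none of the factors $z_{w_C-|S_C|,0}$ of $\mu(A,S)$ is a power of $z_{1,0}$, so the exponent of $z_{1,0}$ in $\mu(A,S)$ is exactly $|S|$; the monomial therefore determines $|S|\bmod 2$ and hence the sign, so collecting like terms never cancels.

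For ``not strictly marked $\Rightarrow$ a cancellation occurs'', pick a vertex $v_0$ with $\mathsf{m}(v_0)=(d_0+1,d_0)$ --- the non-strict marks are precisely the $(d+1,d)$ --- assume $T$ has an edge (the single-vertex case is degenerate and vacuously cancellation-free), and take a neighbour $u$ of $v_0$ with $e_0=v_0u$. I would then produce two distinct indices with equal monomial and opposite sign: $(A_1,S_1)=(\varnothing,\,D(v_0))$, where $|S_1|=d_0$ and $\mu=z_{1,0}^{d_0}\cdot z_{w_{v_0}-d_0,0}\cdot\prod_{v\ne v_0}z_{w_v,0}=z_{1,0}^{d_0+1}\prod_{v\ne v_0}z_{w_v,0}$; and $(A_2,S_2)=(\{e_0\},S_2)$ with $S_2$ any $(d_0+1)$-element subset of $\Omega_{\{v_0,u\}}=D(v_0)\sqcup D(u)\sqcup\{e_0\}$, where $|S_2|=d_0+1$ and, using $w_{v_0}=d_0+1$, the component $\{v_0,u\}$ contributes $z_{(w_{v_0}+w_u)-(d_0+1),0}=z_{w_u,0}$, so again $\mu=z_{1,0}^{d_0+1}\prod_{v\ne v_0}z_{w_v,0}$. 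The signs are $(-1)^{d_0}$ and $(-1)^{d_0+1}$; since contributions of both signs land on the same monomial, a cancellation happens.

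I expect the only real obstacle to be verifying that $(A,S)$ is a genuinely faithful index --- that is, that expanding $D_{\bullet_{w,d}}$ hides no internal cancellation (which holds because, for $w=d+1$, the term $i=d$ is the unique one equal to $z_{1,0}^{d+1}$, and for $w>d+1$ every factor $z_{w-i,0}$ has index $\ge 2$) and that the decompositions $\bigsqcup_C E(C)=A$ and $\bigsqcup_C D(C)=D(T)$ really do make the per-component choices independent. Once that is in place, the ``if'' direction is the one-line estimate above and the ``only if'' direction is the explicit pair $(A_1,S_1),(A_2,S_2)$; if one prefers, one may first replace $(T,\mathsf{m})$ by its core using Theorem~\ref{thm:Dcore}, but this is not needed.
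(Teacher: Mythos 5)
Your proof is correct and follows essentially the same route as the paper's: both expand the states model of Proposition~\ref{prop:smr}, observe that strictness forces every factor $z_{w_C-i,0}$ to differ from $z_{1,0}$ so that the exponent of $z_{1,0}$ in a monomial determines its sign, and, for the converse, exhibit the same cancelling pair arising from $A=\varnothing$ versus $A=\{e\}$ at a vertex of mark $(d_0+1,d_0)$. The only differences are cosmetic --- your dot-edge indexing $\Omega_C$ makes the binomial bookkeeping bijective, and you handle the absorbable-leaf subcase by the same construction rather than deferring to Theorem~\ref{thm:Dcore} --- though both you and the paper quietly pass over the degenerate single-vertex tree with a non-strict mark, for which the substitution is cancellation-free and the literal ``only if'' fails.
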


\begin{proof}
Since $T$ is a tree, each term in the expansion of the $M$-polynomial  given by Equation \eqref{Mformula1} is of the form $\mathbf{z}_{\lambda(T,\mathsf{m},A)}$, where $A\subseteq E(T)$. 
Moreover, each term of $D_{\bullet_{w,d}}$ is of the form $(-1)^{i}\binom{d}{i}z_{1,0}^{i}z_{w-i,0}$ for $0\leq i\leq d$. It follows that by substituting $z_{w,d} = D_{\bullet_{w,d}}$ into $\mathbf{z}_{\lambda(T,\mathsf{m},A)}$ yields a polynomial where each monomial is of the form
\begin{equation}
\label{e:term}
 C (-1)^\ell  z_{1,0}^\ell z_{w_1-i_1,0}z_{w_2-i_2,0} \cdots z_{w_k-i_k,0},  
\end{equation}
where $C$ is a positive constant, $\lambda(T,\mathsf{m},A)= (w_1,d_1), (w_2,d_2) ,\ldots , (w_k,d_k),$ for each $j \in \{1,2,\ldots, k\}$, $0 \leq i_j \leq d_j$ and  $\ell := i_1 + i_2 + \cdots +  i_k$. Note that there is a term of this form for every choice of $(i_1,\ldots,i_k)$ satisfying $0\leq i_j\leq d_j$ for all $i\in\{1,\ldots,k\}$. 

Suppose $(T,\mathsf{m})$ is strictly marked. Then every variable $z_{w,d}$ appearing in $M_{(T,\mathsf{m})}(\mathbf{z})$ satisfies $w > d+1$.  It follows that the variables $z_{w_j-i_j,0}$ in Equation  \eqref{e:term} are all different than $z_{1,0}$ and thus all terms with the same power of $z_{1,0}$ will have the same sign and thus they cannot cancel. 

Suppose now that $(T, \mathsf{m})$ is not strictly marked. If $T$ has some absorbable leaf, then by Theorem \ref{thm:Dcore} and the discussion preceding it, there will be many cancellations. Thus, we may assume that $(T,\mathsf{m})$ has an internal  vertex $v_1$ with  mark $(w_1,d_1)$ such that $w_1 = d_1 + 1$ and let $e=v_1v_2$ be any edge incident with $v_1$. Now consider the terms $\mathbf{z}_{\lambda(T,\mathsf{m},\emptyset)}$ and $\mathbf{z}_{\lambda(T,\mathsf{m},\{e\})}$ appearing in Equation \eqref{Mformula1}. We will show that undotting these terms produces a cancellation. Indeed these terms have the form 
\[z_{w_1,d_1}z_{w_2,d_2}\ldots z_{w_n,d_n}\quad\text{and}\quad
z_{w_1+w_2,d_1+d_2+1}z_{w_3,d_3}\ldots z_{w_n,d_n},\]
where $(w_1,d_1),(w_2,d_2),\ldots,(w_n,d_n)$ is the multiset of marks of the vertices of $(T,\mathsf{m})$ and we assume that $(w_i,d_i)$ is the mark of $v_i$ for $i\in\{1,2\}$. By choosing $i_1=d_1$ and $i_j = 0$ for $1 < j \leq n$  in Equation \eqref{e:term} undotting the first term yields  a term of the form 
$$ C' (-1)^{d_1}  z_{1,0}^{d_1} z_{w_1-d_1, 0}z_{w_2,0} \ldots z_{w_n,0}= C' (-1)^{d_1}  z_{1,0}^{d_1+1} z_{w_2,0} \ldots z_{w_n,0}.$$    

On the other hand, by choosing $i_1=d_1+1$ and  $i_j=0$ for all $3\leq j\leq n$ in Equation \eqref{e:term} undotting the second term yields a term of the form: 
$$C (-1)^{d_1+1} z_{1,0}^{d_1+1} z_{w_1+w_2-d_1-1,0}z_{w_3,0} \ldots z_{w_{n},0} = C (-1)^{d_1+1} z_{1,0}^{d_1+1} z_{w_2,0}\ldots z_{w_{n},0}.$$
The conclusion now follows by comparing both terms.

\end{proof}
For the following two results, we identify the variables $z_{w}$ with $z_{w,0}$.
\begin{coro}\label{coro:3n1}
Let $(T,\omega)$ be a weighted tree  of total weight $N$ and $n$ vertices and $(T,\mathsf{m}_\omega)$ its marked version. Suppose that we write 
\[D_{(T,\mathsf{m}_\omega)}(\mathbf{z},y)=\sum_{\lambda\vdash N} c_\lambda \mathbf{z_\lambda}.\]
Then, $\sum_{\lambda\vdash N} |c_\lambda|=3^{n-1}$
if and only if $(T,\omega)$ is strictly weighted. 
\end{coro}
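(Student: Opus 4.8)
The plan is to deduce the corollary from Proposition~\ref{prop:MDproper} together with an exact count of how many signed monomials the undotting substitution produces on a tree before like terms are collected.

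For the count, recall that since $T$ is a tree, Proposition~\ref{prop:smr} gives $M_{(T,\mathsf{m}_\omega)}(\mathbf{z},y)=\sum_{A\subseteq E(T)}\mathbf{z}_{\lambda(T,\mathsf{m}_\omega,A)}$, with no dependence on $y$ because $|A|=r_T(A)$ for every $A$. Fix $A$ with $|A|=k$. Then $T|_A$ is a forest with $n-k$ components, of sizes $m_1,\dots,m_{n-k}$ summing to $n$. Every vertex mark of $(T,\mathsf{m}_\omega)$ has zero dots, and the dot-sum of $m$ such marks has exactly $m-1$ dots, so the factor of $\mathbf{z}_{\lambda(T,\mathsf{m}_\omega,A)}$ corresponding to the $j$-th component is $z_{w_j,\,m_j-1}$ for some weight $w_j$. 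Applying the undotting substitution $z_{w,d}=D_{\bullet_{w,d}}=\sum_{i=0}^{d}(-1)^i\binom{d}{i}z_{w-i,0}z_{1,0}^i$ from \eqref{def:bullet} and recording the summand $z_{w-i,0}z_{1,0}^i$ with its binomial multiplicity $\binom{d}{i}$, each factor $z_{w,d}$ expands into $\sum_{i=0}^d\binom di=2^d$ signed monomials; hence $\mathbf{z}_{\lambda(T,\mathsf{m}_\omega,A)}$ expands into $\prod_{j}2^{\,m_j-1}=2^{\,\sum_j(m_j-1)}=2^{\,n-(n-k)}=2^{k}$ signed monomials, each of the form $\pm\mathbf{z}_\lambda$ with $\lambda\vdash N$. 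Summing over all $A\subseteq E(T)$, the undotting substitution produces, counted with multiplicity and before collecting like terms, exactly $\sum_{k=0}^{n-1}\binom{n-1}{k}2^{k}=3^{n-1}$ signed monomials.

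Now collect like terms. Each $c_\lambda$ is an alternating sum of $\pm 1$'s, one for each of the signed monomials above that equals a scalar times $\mathbf{z}_\lambda$, so $|c_\lambda|$ is at most the number of those monomials, with equality precisely when they all have the same sign. Summing over $\lambda\vdash N$ yields $\sum_\lambda|c_\lambda|\le 3^{n-1}$, with equality if and only if no two of the $3^{n-1}$ signed monomials that represent the same $\mathbf{z}_\lambda$ have opposite signs, i.e.\ if and only if the undotting substitution is cancellation-free. By Proposition~\ref{prop:MDproper}, this last condition holds if and only if $(T,\mathsf{m}_\omega)$ is strictly marked, which (since every mark is of the form $(\omega(v),0)$) happens exactly when $\omega(v)>1$ for all $v$, i.e.\ exactly when $(T,\omega)$ is strictly weighted. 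Combining these equivalences gives $\sum_{\lambda\vdash N}|c_\lambda|=3^{n-1}$ if and only if $(T,\omega)$ is strictly weighted.

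The one subtle point, which I would spell out carefully, is that a single genuine cancellation forces the inequality $\sum_\lambda|c_\lambda|<3^{n-1}$ to be \emph{strict} rather than being compensated elsewhere: this is immediate because the number $3^{n-1}$ above is an exact count of the multiset of signed monomials, so if for one $\lambda$ the representing monomials include both a $+$ and a $-$, then $|c_\lambda|$ is strictly less than their number while $|c_{\lambda'}|$ never exceeds the count for any other $\lambda'$. Everything else is routine once Proposition~\ref{prop:MDproper} is in hand; that proposition is the real content being invoked here.
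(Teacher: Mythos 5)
Your proposal is correct and follows essentially the same route as the paper: count the $3^{n-1}$ signed monomials produced by the undotting substitution over all edge subsets (via $\sum_{k}\binom{n-1}{k}2^{k}$), observe that $\sum_\lambda|c_\lambda|$ attains this count exactly when no cancellation occurs, and invoke Proposition~\ref{prop:MDproper} to characterize cancellation-freeness by strict marking. Your write-up is in fact slightly more careful than the paper's (which abbreviates the term count and the final equivalence), but the argument is the same.
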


\begin{proof}
Given an edge subset $A$ of $(T,\mathsf{m})$ of size $l$, the spanning subgraph $T|_A$ has $n-l$ connected components. Since  $\lambda(T,\mathsf{m},A)$ is a $\MM$-partition of $(N,n-1)$, this means that the induced term has $l=n-1-(n-l)+1$ dots. Moreover, by the Binomial theorem, after applying the undotting substitution we get $2^l$ terms. It follows that the expression obtained after applying the undotting substitution in \eqref{Mformula1} has 
\[  \sum_{l=0}^{n-1}\binom{n}{l}2^l = 3^{n-1}\]
terms. The conclusion now follows from Proposition \ref{prop:MDproper}.
\end{proof}

\begin{coro}
\label{WfromD}
Let $(G,\omega)$ be a strictly weighted graph and $(G,\mathsf{m}_{\omega})$ be its marked version. Then the $W$-polynomial of $(G,\omega)$ does not depend on $z_1$ and 
\[W_{(G,\omega)}(\mathbf{z},y)=D_{(G,\mathsf{m}_\omega)}(z_{1,0}=0,z_{w,0}=z_w,y).\]
\end{coro}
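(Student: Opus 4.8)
The plan is to combine the states-model representation of the $M$-polynomial (Proposition~\ref{prop:smr}) with a direct evaluation of the building blocks $D_{\bullet_{w,d}}$ at $z_{1,0}=0$, and then to recognize the outcome either via Proposition~\ref{prop:WfromM} or as the states-model expansion of the $W$-polynomial from Theorem~\ref{teoUtoX}.

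First I would record the only computation that is genuinely needed: if $w\geq 2$, then $D_{\bullet_{w,d}}(\mathbf{z},y)|_{z_{1,0}=0}=z_{w,0}$. This is immediate from \eqref{def:bullet}: every summand with $i\geq 1$ carries a factor $z_{1,0}^{i}$ and hence vanishes after setting $z_{1,0}=0$, leaving only the $i=0$ term $z_{w,0}$; and $z_{w,0}\neq z_{1,0}$ precisely because $w\geq 2$.

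Next I would check that strictness forces every variable that actually occurs in the states-model expansion \eqref{Mformula1} of $M_{(G,\mathsf{m}_\omega)}$ to have weight at least $2$. Indeed, for $A\subseteq E(G)$ and a connected component $C$ of $G|_A$ with $|V(C)|=\ell_C$, the total mark $\mathsf{m}_\omega(V(C))$ equals $\bigl(\sum_{v\in V(C)}\omega(v),\ \ell_C-1\bigr)$, since the dot-sum of $\ell_C$ marks each having $0$ dots produces $\ell_C-1$ dots. As $(G,\omega)$ is strictly weighted, $\omega(v)\geq 2$ for every $v$, so the weight of this total mark is at least $2\ell_C\geq 2$ (in fact $2\ell_C>\ell_C=(\ell_C-1)+1$, so the mark is strict). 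Hence every variable $z_{w,d}$ appearing in $M_{(G,\mathsf{m}_\omega)}(\mathbf{z},y)$ has $w\geq 2$.

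Finally I would assemble the pieces. Substituting $z_{w,d}=D_{\bullet_{w,d}}$ into \eqref{Mformula1} yields $D_{(G,\mathsf{m}_\omega)}$, and since evaluating $z_{1,0}=0$ is a ring homomorphism it commutes with this substitution; by the previous two paragraphs it simply replaces each occurring variable $z_{w,d}$ by $z_{w,0}$. Renaming $z_{w,0}$ as $z_w$ then turns $D_{(G,\mathsf{m}_\omega)}(z_{1,0}=0,z_{w,0}=z_w,y)$ into $\sum_{A\subseteq E(G)}\mathbf{z}_{\lambda(G,\omega,A)}(y-1)^{|A|-r_G(A)}$, which is exactly $W_{(G,\omega)}(\mathbf{z},y)$ by Theorem~\ref{teoUtoX} (equivalently, one can route through Proposition~\ref{prop:WfromM}). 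The same observation, that every part of $\lambda(G,\omega,A)$ is a sum of vertex weights each at least $2$, shows $z_1$ never appears in $W_{(G,\omega)}$, which gives the first assertion. The only point requiring care is the bookkeeping of the dot count in the total mark of a component and the resulting strict inequality $w>d+1$ (in particular $w\geq 2$); there is no real obstacle beyond that.
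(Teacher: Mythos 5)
Your proposal is correct and follows essentially the same route as the paper: observe that strictness forces every variable occurring in the states-model expansion of $M_{(G,\mathsf{m}_\omega)}$ to come from a strict mark (in particular $w\geq 2$), note that $D_{\bullet_{w,d}}$ evaluated at $z_{1,0}=0$ collapses to $z_{w,0}$, and identify the result with the $W$-polynomial via its states model (equivalently, Proposition~\ref{prop:WfromM}). The only difference is that you re-derive the strictness of the total marks of components explicitly, whereas the paper cites its earlier observation to the same effect.
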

\begin{proof}
We already observed that the variable $z_{1,0}$ does not appear in the $M$-polynomial of a strictly weighted graph $(G,\mathsf{m})$. By Proposition \ref{prop:WfromM} it follows that $z_1$ does not appear in the $W$-polynomial of $(G,\omega)$. Moreover, if $(w,d)$ is different from $(1,0)$ then $D_{\bullet_{w,d}}(z_{1,0}=0,z_{w,0}=z_w)=z_{w}.$ The conclusion now follows.
\end{proof}

Now we turn our attention back to the  expansion of chromatic symmetric functions in the star basis. 
\begin{theorem}\label{thm:dyx}
Let $G$ be a simple graph and $(K,\mathsf{m}_K)$ be the core of $(G,\mathbbm{1})$. Then, 
\[X_G = D_{(K,\mathsf{m}_K)}(z_{w,0} = \st_w,y=0),\]
where $z_{w,0}=\st_w$ means that we substitute each variable $z_{w,0}$ with $\st_w$ for all $w\in\PP$. In other words, the expansion of $D$ into monomials encodes the chromatic star expansion of $X_G$.
\end{theorem}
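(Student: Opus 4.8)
The plan is to reduce the statement to an assertion about $D_{(G,\mathbbm 1)}$ and then run an induction parallel to the one in Theorem~\ref{thm:stfree}. By Theorem~\ref{thm:Dcore} we have $D_{(K,\mathsf{m}_K)}=D_{(G,\mathbbm 1)}$, so it suffices to prove, for every simple graph $G$, the identity
\[X_G = D_{(G,\mathbbm 1)}(z_{w,0}=\st_w,\,y=0),\]
which I will call $(\star)$; I would prove it by induction on the number of internal edges of $G$, showing that both sides satisfy the \emph{same} deletion-near-contraction recursion with the \emph{same} base case. For the base case, if $G$ has no internal edges then $G$ is a star forest $St_{\mu_1}\sqcup\cdots\sqcup St_{\mu_l}$ (isolated vertices counted as copies of $St_1$), so $X_G=\st_{\mu_1}\cdots\st_{\mu_l}$ since $\st_k=X_{St_k}$. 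On the other side, the core of $(St_k,\mathbbm 1)$ is the one-vertex graph $\bullet_{k,0}$ (absorb all $k-1$ leaves into the center), so by Theorem~\ref{thm:Dcore} and the multiplicativity of the $M$-polynomial over connected components (hence of $D$), established just after Proposition~\ref{prop:smr}, we get $D_{(G,\mathbbm 1)}=\prod_i D_{\bullet_{\mu_i,0}}=\prod_i z_{\mu_i,0}$; the substitution $z_{w,0}=\st_w$ then gives exactly $\prod_i\st_{\mu_i}=X_G$.

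For the inductive step, fix an internal edge $e=uv$ of $G$. On the chromatic side, the simple-graph form of Proposition~\ref{prop:near_DC} with $\omega=\mathbbm 1$ gives
\[X_G = X_{G\setminus e} + X_{(G\odot e)^s} - X_{(G\odot e)^s\setminus\ell_e},\]
and on the $D$-side Proposition~\ref{prop:dproperties} gives
\[D_{(G,\mathbbm 1)} = D_{(G\setminus e,\mathbbm 1)} + D_{(G,\mathbbm 1)\odot e} - D_{(G,\mathbbm 1)\odot e\setminus\ell_e}.\]
The bookkeeping observation I would use is that dot-summing two marks $(1,0)$ and then subtracting $1$ from the weight returns $(1,0)$, so $(G,\mathbbm 1)\odot e$ and $(G,\mathbbm 1)\odot e\setminus\ell_e$ are again \emph{unmarked}: the first has underlying graph $G/e$ with a pendant vertex $v''$ attached to the contracted vertex, and the second is $G/e$ together with the extra isolated vertex $v''$. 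Setting $y=0$, the $D$-polynomial, being a specialization of the $M$-polynomial, does not distinguish a graph from its simplification by Remark~\ref{rem:My0}; hence at $y=0$ the two terms $D_{(G,\mathbbm 1)\odot e}$ and $D_{(G,\mathbbm 1)\odot e\setminus\ell_e}$ may be replaced by $D_{((G\odot e)^s,\mathbbm 1)}$ and $D_{((G\odot e)^s\setminus\ell_e,\mathbbm 1)}$, so the $D$-recursion at $y=0$ matches the chromatic recursion term by term.

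To close the induction I would invoke, exactly as in the proof of Theorem~\ref{thm:stfree}, that $G\setminus e$, $(G\odot e)^s$ and $(G\odot e)^s\setminus\ell_e$ are simple graphs with strictly fewer internal edges than $G$; by the induction hypothesis $(\star)$ holds for each of them, and applying the ring homomorphism $z_{w,0}\mapsto\st_w$ (which is compatible with the linear $D$-identity above) at $y=0$ and comparing with the chromatic relation yields $X_G=D_{(G,\mathbbm 1)}(z_{w,0}=\st_w,y=0)$, completing the induction; combined with Theorem~\ref{thm:Dcore} this gives the stated equality for the core. An equivalent formulation of the whole argument is that the deletion-near-contraction tree built for $D_{(G,\mathbbm 1)}$ at $y=0$ is literally the tree $\mathcal{T}(G)$ produced by the Star-Expansion algorithm, with matching leaf contributions. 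The only genuinely delicate point is the identification in the inductive step of the near-contracted \emph{marked} graphs with the \emph{simple} graphs appearing in the chromatic relation — keeping track of the stray isolated vertex $v''$ and invoking the $y=0$ simplification-invariance of Remark~\ref{rem:My0}; once this is in place, the induction is immediate.
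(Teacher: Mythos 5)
Your proposal is correct and follows essentially the same route as the paper's proof: reduce to $D_{(G,\mathbbm 1)}$ via Theorem~\ref{thm:Dcore}, induct on the number of internal edges with star forests as the base case, and match the deletion-near-contraction recursions for $X$ and for $D$ at $y=0$ using Remark~\ref{rem:My0} to pass to simplifications. The extra bookkeeping you spell out (that $(G,\mathbbm 1)\odot e$ is again unmarked) is implicit in the paper and correctly handled.
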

\begin{proof}
We will show 
\[X_{G} = D_{(G,\mathbbm{1})}(z_{i,0}=\st_{i},y=0).\] Then, the assertion will follow from 
Theorem~\ref{thm:Dcore}. The proof is very similar to the proof of Theorem~\ref{thm:stfree} and we proceed by induction on the number of internal edges of $G$.  
For the base case, if $G$ has no internal edges and it is connected, then $G$ is a star $St_w$, and  by Theorem~\ref{thm:Dcore} we have $D_{(G,\mathbbm{1})}(\mathbf{z},y=0)=z_{w,0}$ since every leaf in $G$ is absorbable.  Since both the chromatic symmetric function and the $D$-polynomial are multiplicative over connected components, the disconnected case follows. 

 For the inductive step, we assume that the assertion holds for all graphs with at most $n$ internal edges, and we let $G$ be a graph with $n+1$ internal  edges.   By deletion-near-contraction applied to an internal  edge $e$, we have 
\[X_G = X_{G\setminus e} + X_{(G\odot e)^s}  - X_{(G\odot e)^s \setminus \ell_e}.\]
Since each graph on the right-hand side  of the last equation is simple and has at most $n$ internal edges, it follows from the induction hypothesis  that 
\[ X_G = D_{(G\setminus e,\mathbbm{1})}({z}_{w,0} = \st_w,y=0) + D_{((G\odot  e)^s,\mathbbm{1})}({z}_{w,0} =  \st_w,y=0)-D_{((G\odot  e)^s\setminus \ell_e,\mathbbm{1})}({z}_{w,0} =  \st_w,y=0).\]
Recall that by Remark~\ref{rem:My0}, the $D$-polynomial at $y=0$ does not distinguish between a graph and its simplification. Thus,
\[ X_G = D_{(G\setminus e,\mathbbm{1})}({z}_{w,0} = \st_w,y=0) + D_{(G\odot  e,\mathbbm{1})}({z}_{w,0} =  \st_w,y=0)-D_{(G\odot  e\setminus \ell_e,\mathbbm{1})}({z}_{w,0} =  \st_w,y=0).\]
Since $D$ also satisfies the deletion-near-contraction recurrence, we have 
\[ X_G = D_{(G,\mathbbm{1})}({z}_{w,0} =  \st_w,y=0),\]
which concludes the inductive step. Now the assertion follows by induction.

\end{proof}

Recall that a tree is proper if every internal vertex is adjacent to at least one leaf.

\begin{coro}
\label{MtoX}
Let $G$ be a simple graph and $(K,\mathsf{m}_K)$ be the core of $(G,\mathbbm{1})$. 
Then 
 \[X_G = \sum_{A\subseteq E(K)} (-1)^{|A|-r_K(A)}\prod_{i=1}^{k(A)}\left( \sum_{j=0}^{k_i}\binom{k_i}{j}(-1)^j\st_{(w_i-j,1^j)}\right),\]
 where $\{(w_i,k_i)\}_{1\leq i \leq k(A)}$ are the total marks of the components of 
 $(K|_A,\mathsf{m}_K)$.
Moreover, the expression given above is cancellation-free when $G$ is a proper tree.
\end{coro}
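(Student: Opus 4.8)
The plan is to read off the displayed identity from Theorem~\ref{thm:dyx} by making the three substitutions that define $D_{(K,\mathsf{m}_K)}(z_{w,0}=\st_w,y=0)$ fully explicit, and then to control cancellations via Proposition~\ref{prop:MDproper}. First I would expand $D_{(K,\mathsf{m}_K)}$ using Proposition~\ref{prop:smr}: by \eqref{Mformula1},
\[M_{(K,\mathsf{m}_K)}(\mathbf{z},y)=\sum_{A\subseteq E(K)}\mathbf{z}_{\lambda(K,\mathsf{m}_K,A)}\,(y-1)^{|A|-r_K(A)},\]
and if $(w_1,k_1),\dots,(w_{k(A)},k_{k(A)})$ are the total marks of the components of $K|_A$ then $\mathbf{z}_{\lambda(K,\mathsf{m}_K,A)}=z_{w_1,k_1}\cdots z_{w_{k(A)},k_{k(A)}}$, which the undotting substitution sends to $\prod_{i}D_{\bullet_{w_i,k_i}}$. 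Setting $y=0$ turns $(y-1)^{|A|-r_K(A)}$ into $(-1)^{|A|-r_K(A)}$, and substituting $z_{w,0}=\st_w$ into \eqref{def:bullet} gives $D_{\bullet_{w,d}}\mapsto\sum_{j=0}^{d}(-1)^{j}\binom{d}{j}\st_{w-j}\st_1^j=\sum_{j=0}^{d}(-1)^{j}\binom{d}{j}\st_{(w-j,1^{j})}$, where I use $\st_\mu=\prod_i\st_{\mu_i}$ and the fact that $w\ge d+1>j$ forces $w-j\ge1$. Assembling these three substitutions yields the formula for $X_G$. (When $w-j=1$ the symbol $\st_{(w-j,1^{j})}$ is understood as $\st_{(1^{j+1})}$, which is immaterial for what follows.)

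For the cancellation-free assertion, the key step is the structural observation that the core of a proper tree is a strictly marked tree. Indeed, in $(G,\mathbbm{1})$ every leaf has mark $(1,0)$, so every pendant edge is absorbable; since $G$ is proper, each internal vertex of $G$ is adjacent to at least one leaf, and absorbing such a leaf raises its weight above $1$, so no internal vertex is ever absorbable. Hence (assuming $G$ has an internal vertex, the remaining cases being trivial) the core $(K,\mathsf{m}_K)$ is obtained from $G$ by deleting all its leaves and giving each internal vertex $v$ the mark $(1+\ell_v,0)$, where $\ell_v\ge1$ is the number of leaves adjacent to $v$; its underlying graph is the subtree of $G$ induced by the internal vertices, and every mark lies in $\MM^\circ$. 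Now Proposition~\ref{prop:MDproper} applies and tells us that already the undotting substitution in \eqref{Mformula1} produces a cancellation-free expression for $D_{(K,\mathsf{m}_K)}(\mathbf{z},y)$. It then suffices to note that the remaining specialization preserves cancellation-freeness: the $D$-polynomial of the forest $K$ does not involve $y$ since $|A|=r_K(A)$ for all $A$, and the map $z_{w,0}=\st_w$ sends distinct monomials $z_{a_1,0}\cdots z_{a_k,0}$ to distinct (linearly independent) elements $\st_{(a_1,\dots,a_k)}$, so no two terms are merged.

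I expect the first paragraph to be routine bookkeeping; the part needing care is the structural claim above — one must verify that absorption, although defined one edge at a time, never swallows an internal vertex, which is exactly where properness enters and which is legitimate because the core is order-independent (from the proof of Theorem~\ref{thm:Dcore}). If one prefers to bypass Proposition~\ref{prop:MDproper}, the cancellation-free claim can instead be checked directly on the displayed formula: since $K$ is a strictly marked tree, in any term $\st_\mu$ arising from a pair $(A,(j_i)_i)$ the parts of $\mu$ that are at least $2$ are exactly the numbers $w_i-j_i$ (because $w_i>k_i+1\ge j_i+1$ gives $w_i-j_i\ge2$), so their number equals $k(A)=|V(K)|-|A|$; hence $|A|$, and then $\sum_i j_i=(\text{number of parts of }\mu)-k(A)$, are determined by $\mu$ alone, so every occurrence of $\st_\mu$ carries the same sign $(-1)^{\sum_i j_i}$.
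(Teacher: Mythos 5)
Your proposal is correct and follows essentially the same route as the paper: the identity is obtained by combining Theorem~\ref{thm:dyx} with the states model representation~\eqref{Mformula1} and the undotting substitution, and the cancellation-free claim is reduced to Proposition~\ref{prop:MDproper} via the observation that the core of a proper tree is strictly marked. You merely make explicit two points the paper leaves implicit (that absorption never swallows an internal vertex of a proper tree, and that the final substitution $z_{w,0}=\st_w$ cannot introduce new cancellations), both of which you justify correctly.
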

\begin{proof}
The first assertion follows directly from Theorem~\ref{thm:dyx}, the definition of the $D$-polynomial and the states model representation~\eqref{Mformula1} for the $M$-polynomial.

The fact that the expression is cancellation free whenever $G$ is a proper tree follows from Proposition~\ref{prop:MDproper} since the core of a proper tree is an strictly marked graph.
\end{proof}

\begin{remark}
In an independent work and using ideas similar to the deletion-near-contraction formula, I. Shah found another expression for the $\st$-expansion of $X_G$ \cite{chmutovshah}.
\end{remark}

\section{Computing the $D$-polynomial of non-strictly marked forests} \label{sec:Mp}

In this section we restrict our attention to forests. Our goal is to introduce a variation on the $M$-polynomial, denoted $M'$, such that we still have the identity $D_{(T,\omega)}(\mathbf{z})= M'_{(T,\omega)}(z_{w,k} = D_{\bullet_{w,k}}(\mathbf{z}))$ whenever $T$ is a weighted forest but no cancellations arise when doing the substitution. 

We start by noting a consequence of the deletion-contraction definition of the $M$-polynomial.  Since for a marked forest $(T, \mathsf{m})$ the $M$-polynomial has no term with $y$, from now on we omit this variable from the $M$-polynomial. For disjoint edge-sets $A_1,A_2$, the graph $(T/A_1\setminus A_2, \mathsf{m}/A_1)$ is the marked forest obtained by contracting the edges in $A_1$ and deleting the edges in $A_2$. It is well known that the resulting graph is independent of the order in which these deletions and contractions are performed.

\begin{lemma} \label{lem:Mstate}
Let $(T,\mathsf{m})$ be a marked forest and let $B$ be any subset of $E(G)$. Then
\[
M_{(T,\mathsf{m})}(\mathbf{z})=\sum_{A_1\sqcup A_2 = B} M_{(T/A_1\setminus A_2, \mathsf{m}/A_1)}(\mathbf{z})
,\]
where the sum ranges over all disjoint ordered pairs $(A_1,A_2)$ such that $A_1\sqcup A_2=B$.
\end{lemma}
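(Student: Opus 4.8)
The statement is a finite iteration of the deletion--contraction rule for the $M$-polynomial, so the natural approach is induction on $|B|$. For the base case $B=\emptyset$, the right-hand side has a single summand indexed by the pair $(A_1,A_2)=(\emptyset,\emptyset)$, which is $M_{(T/\emptyset\setminus\emptyset,\,\mathsf{m}/\emptyset)}(\mathbf z)=M_{(T,\mathsf{m})}(\mathbf z)$, so the identity holds trivially.

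For the inductive step, I would fix an edge $e\in B$ and write $B=B'\sqcup\{e\}$ with $|B'|=|B|-1$. Since $(T,\mathsf{m})$ is a forest, $e$ is never a loop, so the deletion--contraction rule (part c) of the definition of the $M$-polynomial) gives
\[
M_{(T,\mathsf{m})}(\mathbf z)=M_{(T\setminus e,\mathsf{m})}(\mathbf z)+M_{(T/e,\mathsf{m}/e)}(\mathbf z).
\]
Now apply the inductive hypothesis to each of the two marked forests on the right, using the edge set $B'$ (viewed inside $E(T\setminus e)=E(T/e)=E(T)\setminus\{e\}$, which contains $B'$). This expands $M_{(T\setminus e,\mathsf{m})}$ as a sum over ordered partitions $B'=A_1'\sqcup A_2'$ of the terms $M_{((T\setminus e)/A_1'\setminus A_2',\,\mathsf{m}/A_1')}$, and likewise $M_{(T/e,\mathsf{m}/e)}$ as a sum over $B'=A_1'\sqcup A_2'$ of $M_{((T/e)/A_1'\setminus A_2',\,(\mathsf{m}/e)/A_1')}$. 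The key bookkeeping observation is that an ordered partition $B=A_1\sqcup A_2$ is exactly the data of an ordered partition $B'=A_1'\sqcup A_2'$ together with a choice of which part $e$ belongs to: if $e\in A_2$ then $(A_1,A_2)=(A_1',A_2'\cup\{e\})$ and the corresponding summand is $M_{(T/A_1'\setminus(A_2'\cup\{e\}),\,\mathsf{m}/A_1')}=M_{((T\setminus e)/A_1'\setminus A_2',\,\mathsf{m}/A_1')}$; if $e\in A_1$ then $(A_1,A_2)=(A_1'\cup\{e\},A_2')$ and the summand is $M_{(T/(A_1'\cup\{e\})\setminus A_2',\,\mathsf{m}/(A_1'\cup\{e\}))}=M_{((T/e)/A_1'\setminus A_2',\,(\mathsf{m}/e)/A_1')}$. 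Summing these two families over all $(A_1',A_2')$ reproduces exactly the two terms coming from deletion--contraction, and together they are the full sum over ordered partitions of $B$, which is the claimed right-hand side.

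The only real point needing care---and the step I expect to be the mildest obstacle---is verifying that the deletion and contraction operations commute appropriately, i.e.\ that $(T\setminus e)/A_1'\setminus A_2'=T/A_1'\setminus(A_2'\cup\{e\})$ as marked forests, and that $(T/e)/A_1'\setminus A_2'=T/(A_1'\cup\{e\})\setminus A_2'$ with matching mark functions (the dot-sum $\dotplus$ is associative and commutative, so the order of contractions does not affect the resulting marks). This is precisely the well-known order-independence fact recalled just before the statement of the lemma, so it can be cited rather than reproved. Once that identification is in place, the inductive step is a purely formal rearrangement of a finite sum, and the proof is complete.
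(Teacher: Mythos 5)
Your proof is correct and follows exactly the argument the paper intends: the lemma is stated there without proof as "a consequence of the deletion-contraction definition," and your induction on $|B|$, combined with the order-independence of deletions and contractions that the paper explicitly records just before the statement, is the standard way to make that consequence precise. The only implicit point worth noting is that contracting edges of a forest never creates loops, so every contraction you perform is a legitimate non-loop contraction; this is guaranteed by the forest hypothesis and does not affect the validity of your argument.
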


Next, we define the $M'$-polynomial. It basically obeys the same deletion-contraction rule as the $M$-polynomial, except in the case that the graph is a star whose center has weight $(1,0)$. In that case, the deletion-contraction rule applies to all but one of the edges of the star. To guarantee that it is well defined, we do all deletions and contractions at the same time. 

\begin{definition}\label{def:Mprime}
Given a total order on $\MM$, the \defn{$M'$-polynomial} of a marked forest $(T,\mathsf{m})$ is the unique polynomial in the set of variables $\{z_{w,d}\}$ such that:
  \begin{enumerate}
  \item if $(T,\mathsf{m})$ is a single vertex with mark $(w,d)$, then $M'_{(T,\mathsf{m})}(\mathbf{z})=z_{w,d}$;
  \item if $(T,\mathsf{m})$ has connected components 
  $(T_1,\mathsf{m}_1),\ldots, (T_k,\mathsf{m}_k)$ with $k>1$, then 
  \[M'_{(T,\mathsf{m})}(\mathbf{z})=M'_{(T_1,\mathsf{m}_1)}(\mathbf{z})\cdots M'_{(T_k,\mathsf{m}_k)}(\mathbf{z});\]
  \item if $(T,\mathsf{m})$ is connected and has absorbable vertices, then $M'_{(T,\mathsf{m})}(\mathbf{z})=M'_{(K, \mathsf{m}_K)}(\mathbf{z})$, where $(K,\mathsf{m}_K)$ denotes the core of $T$;
 \item if $(T,\mathsf{m})$ is connected, has at least one edge and no absorbable vertices, then let $A$ be the set of pendant edges of $T$. We distinguish two subcases:
 \begin{enumerate}
     \item If $T$ is a star whose center has mark $(1,0)$ then let $e$  be the pendant edge incident to the leaf with \emph{largest} mark (resolving tie at random if needed) and let $A'=A\setminus\{e\}$;
     \item Otherwise, let $A'=A$;
\end{enumerate}
Then 
    \begin{equation}
    \label{eq:mprimarec}
M'_{(T,\mathsf{m})}(\mathbf{z}) = 
\sum_{A_1\sqcup A_2 = A'} M'_{(T/A_1\setminus A_2, \mathsf{m}/A_1)}(\mathbf{z});
\end{equation}
 \end{enumerate}
\end{definition}

For a fixed total order on $\MM$, the definition above yields a unique polynomial, since any marked forest falls in just one of the cases, and if there are several leaves with maximum mark in case (d), it is clear that the result is independent of the choice of $e$. In fact, the  $M'$-polynomial depends on the total order chosen; however, Theorem~\ref{thm:Mprime_nocancel} below is independent of the order. In an upcoming article \cite{aliste2022markedII} we plan to discuss this in more detail using elimination theory and Gr\"obner bases to analyze these polynomials. 

\begin{example}
Suppose that $(T,\mathsf{m})$ is the marked star given in Example~\ref{ex:M-poly2}. Recall that $M_{(T,\mathsf{m})}(\mathbf{z})= z_{4,0}z_{1,0}z_{2,0}+z_{5,1}z_{2,0}+z_{4,0}z_{3,1}+z_{7,2}$.
Observe that applying the undotting substitution to any of the terms of degree $2$ in the latter expression will yield a cancellation with the term $z_4z_1z_2$. The choice of a total order determines which cancellation will be avoided by absorbing the corresponding edge. Indeed if $(4,0)$ is larger than $(2,0)$, then in the Definition~\ref{def:Mprime}.(d) we get $A'=\{e'\}$ where $e'$ is the pendant edge incident with the leaf of mark $(2,0)$. Thus, 
\[
M'(\mathbf{z})= M'(
\begin{mygraph2}[baseline=-3pt]\draw (0,0) node[label=below:${(2,0)}$] {};
\draw (1,0) node[label=below:${(1,0)}$] {}  -- (2,0) node[label=below:${(4,0)}$] {};
\end{mygraph2})
+M'(\begin{mygraph2}[baseline=-3pt]\draw (0,0) node[label=below:${(3,1)}$] {}--
(1,0) node[label=below:${(4,0)}$] {};
\end{mygraph2}).\]
The first term of the right-hand side  of the computation is handled  by using  items (a),(b) and (c) in Definition~\ref{def:Mprime}, which gives $z_{2,0}z_{5,0}$. The second term is computed by (d) and (a),  which give $z_{3,1}z_{4,0}+z_{7,2}$.

Instead, if $(2,0)$ is larger than $(4,0)$ a similar computation yields 
\[M'(\mathbf{z})= z_{2,0}z_{5,1}+z_{4,0}z_{3,0}+z_{7,2}.\]
Observe that in both cases after applying the undotting substitution we get $D(\mathbf{z})$.
\end{example}
If a marked tree has no mark of the form $(1,0)$, its $M'$-polynomial will be computed by subcase (ii) of case (d) of Definition~\ref{def:Mprime}, and by Lemma~\ref{lem:Mstate} it will be equal to the $M$-polynomial.
\begin{prop}
Suppose that every mark of $(T,\mathsf{m})$ is different from $(1,0)$ (but no necessarily strict). Then 
\[M'_{(T,\mathsf{m})}(\mathbf{z})=M_{(T,\mathsf{m})}(\mathbf{z}).\]
\end{prop}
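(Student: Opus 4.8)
The plan is to proceed by induction on the number of edges of $T$ (or equivalently, since $T$ is a forest with no mark equal to $(1,0)$, on $|V(T)|$), and to show that the defining recursion for $M'_{(T,\mathsf{m})}$ coincides with a valid recursion for $M_{(T,\mathsf{m})}$. First I would observe that the hypothesis ``every mark is different from $(1,0)$'' is inherited by every graph that arises in the recursion: deleting an edge does not change marks, contracting an edge $e=uv$ produces the mark $(w_u+w_v,d_u+d_v+1)$, which cannot be $(1,0)$ since $w_u+w_v\geq 2$, and forming the core only absorbs leaves of mark $(1,0)$—but there are none—so in fact $(T,\mathsf{m})$ \emph{has no absorbable vertices} and case (c) of Definition~\ref{def:Mprime} never applies, nor does it apply to any descendant. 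This is the key structural simplification: under the hypothesis, cases (a), (b), (d)(ii) are the only ones ever used.

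Next I would handle the base and the multiplicative step. If $(T,\mathsf{m})$ is a single vertex with mark $(w,d)$, then both $M'_{(T,\mathsf{m})}=z_{w,d}$ and $M_{(T,\mathsf{m})}=z_{w,d}$ by rule (a) of each definition; if $(T,\mathsf{m})$ is disconnected, case (b) of Definition~\ref{def:Mprime} gives $M'_{(T,\mathsf{m})}=\prod_i M'_{(T_i,\mathsf{m}_i)}$, and by the induction hypothesis each factor equals $M_{(T_i,\mathsf{m}_i)}$; since the $M$-polynomial is multiplicative over connected components (this follows from the states-model representation in Proposition~\ref{prop:smr}, or directly from the deletion-contraction definition), we get $M_{(T,\mathsf{m})}=\prod_i M_{(T_i,\mathsf{m}_i)}$, and the two agree. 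So it remains to treat the case where $(T,\mathsf{m})$ is connected with at least one edge and—as noted above—no absorbable vertices, so $M'_{(T,\mathsf{m})}$ is computed by \eqref{eq:mprimarec} with $A'=A$ the set of \emph{all} pendant edges of $T$ (subcase (d)(ii) always applies here, because if $T$ were a star whose center has mark $(1,0)$ we would be in the excluded situation).

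For this main case I would apply Lemma~\ref{lem:Mstate} to the forest $(T,\mathsf{m})$ with the subset $B:=A$, the set of all pendant edges, obtaining
\[
M_{(T,\mathsf{m})}(\mathbf{z})=\sum_{A_1\sqcup A_2=A} M_{(T/A_1\setminus A_2,\,\mathsf{m}/A_1)}(\mathbf{z}).
\]
On the other hand, Definition~\ref{def:Mprime}(d)(ii) gives exactly
\[
M'_{(T,\mathsf{m})}(\mathbf{z})=\sum_{A_1\sqcup A_2=A} M'_{(T/A_1\setminus A_2,\,\mathsf{m}/A_1)}(\mathbf{z}).
\]
Each forest $(T/A_1\setminus A_2,\mathsf{m}/A_1)$ arising in these sums has strictly fewer edges than $T$ (since $A\neq\emptyset$ and every summand removes at least one pendant edge, whether by contraction or deletion), and it again has no mark equal to $(1,0)$ by the inheritance observation above; so by the induction hypothesis $M'_{(T/A_1\setminus A_2,\mathsf{m}/A_1)}=M_{(T/A_1\setminus A_2,\mathsf{m}/A_1)}$ term by term, and the two displayed sums are equal. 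This closes the induction.

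The step I expect to be the main obstacle is verifying carefully that subcase (d)(i) is genuinely never invoked under the hypothesis; one must confirm that ``$T$ is a star whose center has mark $(1,0)$'' is the \emph{only} trigger for (d)(i), that this is excluded by assumption at the root, and—crucially—that it remains excluded at every node of the recursion tree. The latter follows because contracting a pendant edge of a star yields either an isolated vertex or a smaller star, and in forming it the would-be center mark becomes $(1,0)\dotplus(w,d)=(w+1,d+1)\neq(1,0)$; combined with the fact that no contraction or deletion can \emph{create} a mark $(1,0)$ when none was present, the exclusion propagates. Once this bookkeeping is in place, everything else is a routine application of Lemma~\ref{lem:Mstate} and multiplicativity of $M$, so I would state the inheritance lemma cleanly up front and then let the induction run.
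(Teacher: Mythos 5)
Your proof is correct and takes essentially the same route as the paper, which justifies this proposition with exactly the observation you elaborate: since no mark equals $(1,0)$, no absorbable vertices or $(1,0)$-centered stars ever arise (and contraction produces marks $(w_u+w_v,d_u+d_v+1)$ with $w_u+w_v\geq 2$, so none are created), hence only case (d)(ii) of Definition~\ref{def:Mprime} is used and Lemma~\ref{lem:Mstate} matches it to the recursion for $M$. Your write-up is a more detailed inductive version of the paper's one-line argument, with the same base case, multiplicativity step, and inheritance observation.
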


We next show, as claimed, that for a marked forest $(T, \mathsf{m})$  the substitution $z_{w,d}= D_{\bullet_{w,d}}$ in $M'_{(T,\mathsf{m})}$ gives $D_{(T,\mathsf{m})}$.

\begin{theorem}\label{thm:Mprime_nocancel}
For every marked forest $(T,\mathsf{m})$, we have $D_{(T,\mathsf{m})}(\mathbf{z})=M'_{(T,\mathsf{m})}(z_{w,d}= D_{\bullet_{w,d}})$. 
\end{theorem}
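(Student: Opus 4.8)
The plan is to induct on the number of edges of the marked forest $(T,\mathsf{m})$, matching the four cases of Definition~\ref{def:Mprime} one by one against a corresponding property of the $D$-polynomial. Write $\sigma$ for the undotting substitution $z_{w,d}\mapsto D_{\bullet_{w,d}}$; since $\sigma$ is a ring homomorphism it commutes with sums and products, and what must be shown is $\sigma\bigl(M'_{(T,\mathsf{m})}\bigr)=D_{(T,\mathsf{m})}$. The base case is a single vertex with mark $(w,d)$, i.e.\ case (a): there $M'_{(T,\mathsf{m})}=z_{w,d}$, so $\sigma\bigl(M'_{(T,\mathsf{m})}\bigr)=D_{\bullet_{w,d}}=D_{(T,\mathsf{m})}$ straight from the definition of the $D$-polynomial.

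For the inductive step I would dispatch cases (b) and (c) quickly. If $(T,\mathsf{m})$ is disconnected (case (b)), every component has fewer edges, both $M'$ and $D$ are multiplicative over connected components (the latter because $D$ is a specialization of the $M$-polynomial, which is multiplicative by the states model of Proposition~\ref{prop:smr}), and $\sigma$ is multiplicative, so the claim follows from the induction hypothesis applied componentwise. If $(T,\mathsf{m})$ is connected with an absorbable vertex (case (c)), its core $(K,\mathsf{m}_K)$ has fewer edges, so $\sigma\bigl(M'_{(T,\mathsf{m})}\bigr)=\sigma\bigl(M'_{(K,\mathsf{m}_K)}\bigr)=D_{(K,\mathsf{m}_K)}$ by induction, and $D_{(K,\mathsf{m}_K)}=D_{(T,\mathsf{m})}$ by Theorem~\ref{thm:Dcore}. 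The substantive case is (d), where $(T,\mathsf{m})$ is connected, has an edge, and has no absorbable vertex, so $M'_{(T,\mathsf{m})}$ is given by the recursion \eqref{eq:mprimarec} over the edge set $A'$. Here I would apply $\sigma$ to \eqref{eq:mprimarec}, invoke the induction hypothesis on each smaller forest $(T/A_1\setminus A_2,\mathsf{m}/A_1)$ to get
\[
\sigma\bigl(M'_{(T,\mathsf{m})}\bigr)=\sum_{A_1\sqcup A_2=A'} D_{(T/A_1\setminus A_2,\mathsf{m}/A_1)},
\]
and then recognize the right-hand side as $D_{(T,\mathsf{m})}$ by iterating the deletion-contraction relation for $D$ (Proposition~\ref{prop:dproperties}) over the edges of $A'$ — equivalently, by applying $\sigma$ to Lemma~\ref{lem:Mstate} with $B=A'$.

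The hard part, and the only place where care is genuinely needed, is making the induction well founded in case (d): one must verify that $A'$ is always nonempty, so that every $(T/A_1\setminus A_2,\mathsf{m}/A_1)$ occurring in \eqref{eq:mprimarec} really has strictly fewer edges than $(T,\mathsf{m})$. In subcase (d)(ii) this is immediate, since $A'$ is the full set of pendant edges and a tree with an edge has a leaf. In subcase (d)(i), $T$ is a star whose center carries mark $(1,0)$ and $A'$ omits exactly one edge; the point to check is that such a $T$ reaching case (d) must have at least three vertices — a two-vertex star with a mark-$(1,0)$ endpoint would have that endpoint of degree one and mark $(1,0)$, hence absorbable, and so would have been handled in case (c) — whence $A'$ has at least one edge. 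Once this is settled, the rest is a direct translation of the defining recursion of $M'$ into the already-established recursive properties of $D$, and no new manipulation of the polynomials $D_{\bullet_{w,d}}$ is required beyond what is packaged in Theorem~\ref{thm:Dcore} and Proposition~\ref{prop:dproperties}.
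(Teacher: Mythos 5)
Your proposal is correct and follows essentially the same route as the paper's proof: induction on the number of edges, multiplicativity for the disconnected case, Theorem~\ref{thm:Dcore} for cores, and the combination of the recursion \eqref{eq:mprimarec} with Lemma~\ref{lem:Mstate} (after applying the undotting substitution) for the remaining connected cases. Your explicit check that $A'$ is nonempty in subcase (d)(i) corresponds to the paper's remark that such a star must have at least two edges, so nothing essential differs.
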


    \begin{proof}

The proof is by induction on $n$, the number of edges of $T$.  Recall that by definition $D_{(T,\mathsf{m})}(\mathbf{z})= M_{(T,\mathsf{m})}(z_{w,d}= D_{\bullet_{w,d}})$.

If $n=0$, the claim is clear since by definition $M$ and $M'$ agree on edgeless forests.

For the induction step, assume the statement  holds for all forests with fewer than $n$ edges, and let $T$ be a forest with $n$ edges. We consider the following cases.

\begin{itemize}

\item If $(T,\mathsf{m})$ is connected and has some absorbable vertices, let $(K, \mathsf{m}_K)$ be its core. Then
$$M'_{(T,\mathsf{m})}(z_{w,d}= D_{\bullet_{w,d}})=M'_{(K,\mathsf{m}_K)}(z_{w,d}= D_{\bullet_{w,d}})=D_{(K,\mathsf{m}_K)}(\mathbf{z})=D_{(T,\mathsf{m})}(\mathbf{z}),$$
where we have used the induction hypothesis on the second equality and Theorem~\ref{thm:Dcore} on the third.

\item Assume now that $T$ is a star with no absorbable vertices, whose center has mark $(1,0)$ and with set of pendant edges  $L=\{e_1,\ldots,e_n\}$, where  $e_n$ is incident to the leaf of largest mark. We have 
$$M'_{(T,\mathsf{m})}(\mathbf{z})=\sum_{A_1\sqcup A_2 =  \{e_1,\ldots,e_{n-1}\}} M'_{(T/A_1\setminus A_2, \mathsf{m}/A_1)}(\mathbf{z}).$$
  Each of the forests $T/A_1\setminus A_2$ consists of a single edge $e_n$ and several (perhaps none) isolated vertices. Since $T$ has at least two edges (otherwise it would have an absorbable vertex), the inductive hypothesis and Lemma~\ref{lem:Mstate} give
\begin{align*} M'_{(T,\mathsf{m})}(z_{w,d}=D_{\bullet_{w,d}})&=\sum_{A\subseteq L-\{e_n\}} M'_{(T/A\setminus L-A-\{e_n\},\mathsf{m}/A)}(z_{w,d}=D_{\bullet_{w,d}})\\ &= \sum_{A\subseteq L-\{e_n\}} D_{(T/A\setminus L-A-\{e_n\},\mathsf{m}/A)}(\mathbf{z})\\ &= \sum_{A\subseteq L-\{e_n\}} M_{(T/A\setminus L-A-\{e_n\},\mathsf{m}/A)}(z_{w,d}=D_{\bullet_{w,d}})
\\ &=M_{(T,\mathsf{m})}(z_{w,d}=D_{\bullet_{w,d}})=D_{(T,\mathsf{m})}(\mathbf{z}).
\end{align*}

\item If $T$ is connected but it does not fall in the above, we can again use the inductive hypothesis and Lemma~\ref{lem:Mstate} to conclude
  \begin{align*} M'_{(T,\mathsf{m})}(z_{w,d}=D_{\bullet_{w,d}})& =\sum_{A\subseteq L} M'_{(T/A\setminus L-A,\mathsf{m}/A)}(z_{w,d}=D_{\bullet_{w,d}})=\sum_{A\subseteq L} D_{(T/A\setminus L-A,\mathsf{m}/A)}(\mathbf{z})\\ &=\sum_{A\subseteq L} M_{(T/A\setminus L-A,\mathsf{m}/A})(z_{w,d}=D_{\bullet_{w,d}})=M_{(T,\mathsf{m})}(z_{w,d}=D_{\bullet_{w,d}})\\ & =D_{(T,\mathsf{m})}(\mathbf{z}).
  \end{align*}

\item If $(T,\mathsf{m})$ has connected components $(T_1,\mathsf{m}_1),\ldots, (T_k,\mathsf{m}_k)$ with $k>1$, the multiplicative property of $D$ (inherited from that of $M$), gives 
    $D_{(T,\mathsf{m})}(\mathbf{z}) =D_{(T_1,\mathsf{m}_1)}(\mathbf{z})\cdots D_{(T_k,\mathsf{m}_k)}(\mathbf{z})$. Each of the terms $D_{(T_i,\mathsf{m}_i)}(\mathbf{z})$ equals $M'_{(T_i,\mathsf{m}_i)}(z_{w,d}=D_{\bullet_{w,d}})$, either by the inductive hypothesis or because $T_i$ is a single vertex. By the definition of $M'$ for the disconnected case, we conclude $D_{(T,\mathsf{m})}(\mathbf{z}) = M'_{(T,\mathsf{m})}(z_{w,d}=D_{\bullet_{w,d}})$.  
\end{itemize}

  This finishes the proof of the theorem.
  \end{proof}
  We say that $(T,\mathsf{m})$ is \defn{almost strictly marked} if every mark of $(T,\mathsf{m})$ belongs to $\MM^\circ\cup\{(1,0)\}$. In particular, every weighted tree is almost strictly marked. 
 \begin{prop}
 \label{prop:al_str}
 Let $(T,\mathsf{m})$ be an almost strictly marked tree with more than one vertex. Then all the variables $z_{w,d}$ that appear in the monomials of $M'_{(T,\mathsf{m})}(\mathbf{z})$ are strict, that is $w\geq d+2$. 
 \end{prop}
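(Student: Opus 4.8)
The plan is to prove, by strong induction on the number of edges, a statement slightly more general than Proposition~\ref{prop:al_str}: for every almost strictly marked forest $(F,\mathsf{m})$ \emph{none of whose connected components is a single vertex with mark $(1,0)$}, every variable $z_{w,d}$ that occurs in a monomial of $M'_{(F,\mathsf{m})}(\mathbf{z})$ is strict, i.e.\ $w\geq d+2$. The proposition is then the special case $F=T$: a tree with more than one vertex is connected and is not a single vertex, so it has no single-vertex component. Passing to forests (and carrying the extra hypothesis) is necessary because the recursive evaluation of $M'$ repeatedly produces disconnected graphs, and because the only route to a forbidden variable is through an isolated vertex carrying the mark $(1,0)$ --- in an almost strictly marked forest every mark lies in $\MM^\circ\cup\{(1,0)\}$, so $z_{1,0}$ is the only non-strict variable that could ever appear, and the extra hypothesis is exactly what excludes it.

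First I would handle the base case (edgeless $F$): by hypothesis no vertex has mark $(1,0)$, so every vertex mark is strict, and by parts (1)--(2) of Definition~\ref{def:Mprime} the polynomial $M'_{(F,\mathsf{m})}$ is a monomial in strict variables. For the inductive step, I would use the multiplicativity of $M'$ over connected components (Definition~\ref{def:Mprime}(2)) to reduce to the case where $F$ is connected with at least one edge; such an $F$ is a tree on at least two vertices, so $M'_{(F,\mathsf{m})}$ is computed by case (3) or case (4) of Definition~\ref{def:Mprime}, and in either case it is written in terms of the $M'$-polynomials of forests with \emph{strictly fewer} edges. Hence the whole argument reduces to checking that each of those smaller forests again satisfies the two hypotheses (almost strictly marked, and no single-vertex $(1,0)$ component), after which the induction hypothesis closes the case.

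This verification is the heart of the proof and I would do it case by case. In case (3) ($F$ has an absorbable vertex) the relevant smaller forest is the core $(K,\mathsf{m}_K)$, obtained by a sequence of absorptions; each absorption replaces a mark $(w,d)\in\MM^\circ\cup\{(1,0)\}$ by $(w+1,d)$, which lies in $\MM^\circ$, so $K$ is almost strictly marked, and since absorption preserves total weight and connectedness, if $K$ collapses to a single vertex that vertex has weight at least $2$ and so cannot have mark $(1,0)$. In case (4) ($F$ connected, at least one edge, no absorbable vertex) the relevant smaller forests are the graphs $F/A_1\setminus A_2$ with $A_1\sqcup A_2=A'$, where $A'$ is the (nonempty) set of pendant edges selected by the recursion. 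Here I would use that, because $F$ has no absorbable vertex, \emph{every leaf of $F$ carries a strict mark}; hence every contracted vertex of $F/A_1\setminus A_2$ inherits a strict mark (the dot-sum of marks is strict as soon as one summand is), and any vertex that becomes isolated in $F/A_1\setminus A_2$ is either such a contracted vertex, or an original leaf of $F$ whose pendant edge was deleted (again a strict mark), or --- only when $F$ is a star --- its center, which in subcase (i) still meets the deliberately retained edge $e\notin A'$ and so is not isolated, and in subcase (ii) carries a mark different from $(1,0)$.

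The step I expect to be the main obstacle is not any single computation but keeping the case analysis of case (4) honest: one has to verify that the single pendant edge $e$ withheld from $A'$ in subcase (i) is exactly what prevents a $(1,0)$-centered star from emitting an isolated $(1,0)$ vertex, while in all remaining branches the strictness of the leaves (guaranteed by the absence of absorbable vertices) propagates to every mark the recursion creates. Once both invariants are seen to persist down the recursion, the induction hypothesis applies to every summand and the proof concludes.
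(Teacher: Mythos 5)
Your proof is correct and follows essentially the same route as the paper's: you track how marks evolve under contraction and absorption, observe that in case (d) every contracted edge has a strict leaf endpoint (since there are no absorbable vertices) so contracted marks stay strict, and note that the withheld pendant edge in the $(1,0)$-centered star subcase, together with absorption, prevents a vertex of mark $(1,0)$ from ever becoming isolated. The paper packages this as an informal invariant maintained ``during the computation'' rather than your explicit strong induction on the number of edges with a strengthened hypothesis for forests, but the substance of the argument is identical.
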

 \begin{proof}
 By the definition of $M'$, it is enough to check that if during the computation of $M'$ a vertex becomes isolated then its mark is of the form $(w,d)$ with $w-d\geq 2$.
  Initially all marks are either $(1,0)$ or $(w,d)$ with $w-d\geq 2$. Observe that marks only change when contracting an edge or when absorbing an absorbable leaf.
  Every time a contraction takes place, two vertices with marks $(w_1,d_1)$ and $(w_2,d_2)$ are contracted into a vertex with mark $(w_1+w_2,d_1+d_2+1)$. Since the definition of $M'$ prevents contracting an edge with both endpoints having mark $(1,0)$, at least one mark has to be strict. Thus we can assume $w_1-d_1\geq 2$ and $w_2-d_2\geq 1$, which implies  $w_1+w_2-(d_1+d_2+1)\geq 2$. On the other hand, when absorbing a leaf, a vertex with mark $(1,0)$ is removed and a vertex with mark $(w,d)$ gets mark $(w+1,d)$. We see that $w+1-d\geq 2$ since $w-d\geq 1$  by the definition of marks. So it only remains to check that a vertex $v$ with mark $(1,0)$ cannot become isolated during the computation of $M'_{(T,\mathsf{m})}$. Indeed, if $v$ is a leaf, then it gets immediately absorbed. Otherwise, as at each step we only may delete leaves, at some point either $v$ becomes a leaf (and thus gets absorbed at the next step) or it becomes the center of a star (and thus gets absorbed after two steps).  This finishes the proof.
 \end{proof}
 \begin{prop}
  If $(T,\mathsf{m})$ is an almost strictly marked tree, then no cancellations arise after applying the undotting substitution to $M'_{(T,\mathsf{m})}(\mathbf{z})$.
\end{prop}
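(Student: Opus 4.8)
The plan is to show that, when the undotting substitution is expanded in $M'_{(T,\mathsf m)}(\mathbf z)$, the sign of every monomial produced is determined by its $z_{1,0}$-degree alone; since $M'_{(T,\mathsf m)}(\mathbf z)$ has nonnegative coefficients, this forces all contributions to any fixed monomial to carry the same sign, so nothing cancels. If $T$ is a single vertex with mark $(w,d)$, then $M'_{(T,\mathsf m)}(\mathbf z)=z_{w,d}$ with $(w,d)\in\MM^\circ\cup\{(1,0)\}$, and $D_{\bullet_{w,d}}$ is either $z_{1,0}$ or $\sum_{i=0}^d(-1)^i\binom di z_{w-i,0}z_{1,0}^i$ with $w-i\ge w-d\ge 2$ for every $i$; in either case distinct values of $i$ give distinct monomials and there is nothing to cancel, so from now on I assume $T$ has more than one vertex.

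The first step is to record that $M'_{(T,\mathsf m)}(\mathbf z)$ has nonnegative integer coefficients. This follows by a straightforward induction along the recursive Definition~\ref{def:Mprime}: a single vertex contributes $z_{w,d}$; the product rule of case (b) and the sum in \eqref{eq:mprimarec} both preserve nonnegativity; and case (c) reduces to the core, which has fewer edges. Along the way one checks that in case (d) the set $A'$ is nonempty -- a tree with at least one edge has at least two pendant edges, and a single edge with no absorbable vertex falls under case (d)(ii) -- so the recursion strictly decreases the number of edges and terminates. Writing accordingly $M'_{(T,\mathsf m)}(\mathbf z)=\sum_\lambda c_\lambda\,\mathbf z_\lambda$, the sum over $\MM$-partitions $\lambda$ with $c_\lambda\ge 0$, I then invoke Proposition~\ref{prop:al_str}: every mark $(w,d)$ occurring (with $c_\lambda\neq 0$) in some $\mathbf z_\lambda$ is strict, i.e. $w\ge d+2$.

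Next I expand the undotting substitution on a single $\mathbf z_\lambda$, say $\lambda=\{(w_1,d_1),\dots,(w_k,d_k)\}$: choosing an index $i_j\in\{0,\dots,d_j\}$ for each factor $D_{\bullet_{w_j,d_j}}$, the resulting monomial is
\[
\left(\prod_{j=1}^k(-1)^{i_j}\binom{d_j}{i_j}\right)z_{1,0}^{\,i_1+\cdots+i_k}\prod_{j=1}^k z_{w_j-i_j,0}.
\]
Since $w_j-i_j\ge w_j-d_j\ge 2$ for every $j$, none of the factors $z_{w_j-i_j,0}$ equals $z_{1,0}$; hence the $z_{1,0}$-degree of this monomial is exactly $i_1+\cdots+i_k$ and its sign is $(-1)^{i_1+\cdots+i_k}$. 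In other words, the sign of each monomial produced equals $(-1)^e$, where $e$ is its $z_{1,0}$-degree -- a rule independent of $\lambda$, and (as $c_\lambda\ge 0$) unaffected by multiplying by $c_\lambda$.

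Finally, for a fixed monomial $m=z_{1,0}^{\,e}\prod_r z_{a_r,0}$ (necessarily each $a_r\ge 2$), every contribution to its coefficient in $D_{(T,\mathsf m)}(\mathbf z)=M'_{(T,\mathsf m)}(z_{w,d}=D_{\bullet_{w,d}})$ is a nonnegative multiple of $(-1)^e$; hence the coefficient of $m$ is $(-1)^e$ times a sum of nonnegative numbers and cannot vanish through cancellation, which is the assertion. The only mildly technical point is the first step -- the nonnegativity of the coefficients of $M'$ together with the nonemptiness of $A'$ in case (d) -- which must be extracted from the case analysis in Definition~\ref{def:Mprime}; once that is in hand, the rest is immediate from Proposition~\ref{prop:al_str} and the explicit formula for $D_{\bullet_{w,d}}$.
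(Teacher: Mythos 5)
Your proof is correct and follows essentially the same route as the paper's: handle the one-vertex case directly, then for larger trees combine Proposition~\ref{prop:al_str} with the observation that under the undotting substitution every resulting monomial has sign $(-1)^{\ell}$ where $\ell$ is its $z_{1,0}$-degree, together with the nonnegativity of the coefficients of $M'$. The only difference is that you spell out the nonnegativity and termination of the $M'$ recursion, which the paper simply asserts as clear.
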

 \begin{proof}
If $T$ has only one vertex with mark $(w,d)$ then applying the undotting substitution to $M'_{(T,\mathsf{m})}(\mathbf{z})$ yields $D_{\bullet_{w,d}}$. From \eqref{def:bullet}, we check there are no cancellations and this finishes the proof if $T$ only has one vertex. 

If $T$ has more than one vertex, then by Proposition~\ref{prop:al_str} when substituting 
$z_{w,d}$ with $D_{\bullet_{w,d}}$
we get monomials  of the form $z_{w-i,0}z_{1,0}^i$ where 
$w-i\geq w-d \neq 1$. Hence,  when doing the undotting substitution in a monomial $z_{w_1,d_1}\cdots z_{w_k,d_k}$ of $M'_{(T,\mathsf{m})}$ all the resulting monomials are of the form $(-1)^{\ell}z_{1,0}^{\ell}P(\mathbf{z})$ for some monomial $P(\mathbf{z})$ in which the variable $z_{1,0}$ does not appear. It follows that no cancellations can occur since  monomials that have a fixed power of $z_{1,0}$ appear always with the same sign, and clearly all coefficients in $M'$ are non-negative.  
\end{proof}

\section{Weighted tree reconstruction from marked polynomials}\label{sec:app}

In this section we introduce the problems of weighted tree reconstruction from the $M$-polynomial and the $D$-polynomial, and show their relationship with Stanley's tree isomorphism problem about distinguishing (unweighted) trees with the chromatic symmetric function.

Recall that whenever necessary, a weighted tree $(T,\omega)$ is considered as a marked tree $(T,\mathsf{m}_{\omega})$, where a vertex of weight $w$ has mark $(w,0)$. Similarly, an unweighted graph is considered weighted with all weights equal to 1. To simplify the notation, when writing terms for the $D$-polynomial we  shall identify the variables $z_{i,0}$ with $z_{i}$ and write the second instead of the first. In this section we only deal with trees, so the $M$ and $D$-polynomials do not depend on  the variable $y$ and thus we omit it from their arguments.
 
As both $M$ and $D$ are polynomials for marked graphs, the first question is whether there exist two marked trees, different up to marked-isomorphism, with the same $M$-polynomial. The answer is positive, as shown by the following example.
\begin{example} The following two trees were given in~\cite{loebl2018isomorphism} as an example of two non-isomorphic
weighted graphs with the same $W$ polynomial (for the general construction of paths with the same $W$-polynomial see \cite{aliniaeifard2020extended,aliste2021vertex,aliste2014proper}).

\begin{center}
\begin{mygraph}[baseline=-3pt,thick,scale=0.6]
\pgftransformshift{\pgfpoint{8cm}{0cm}}
\draw (0,-1) node[label=above:{$2$}] {} -- (1.5,-1)
             node[label=above:{$1$}] {} -- (3.0,-1) 
             node[label=above:{$2$}] {} -- (4.5,-1)
             node[label=above:{$3$}] {} -- (6.0,-1)
             node[label=above:{$1$}] {};
\draw (0,-1);
 \node[draw=none, fill=none] at (3.0,-1.8) {$(P_1,\omega_1)$};
\end{mygraph} \quad \quad
\begin{mygraph}[baseline=-3pt,thick,scale=0.6]
\pgftransformshift{\pgfpoint{8cm}{0cm}}
\draw (0,-1) node[label=above:{$2$}] {} -- (1.5,-1)
             node[label=above:{$3$}] {} -- (3.0,-1) 
             node[label=above:{$1$}] {} -- (4.5,-1)
             node[label=above:{$2$}] {} -- (6.0,-1)
             node[label=above:{$1$}] {};
\draw (0,-1);
 \node[draw=none, fill=none] at (3.0,-1.8) {$(P_2,\omega_2)$};
\end{mygraph}
\end{center}

Note that these two trees can be thought as marked trees with marks $(w,0)$ and as such they have different $M$-polynomial. By changing the vertex labels to marks so that the weights are the same but one mark $(w,d)$ has $d\neq 0$, we turn these weighted paths into non-isomorphic marked paths $(P_1,\mathsf{m}_1)$ and $(P_2,\mathsf{m}_2)$.
\begin{center}
\begin{mygraph}[baseline=-3pt,thick,scale=0.6]
\pgftransformshift{\pgfpoint{8cm}{0cm}}
\draw (0,-1) node[label=above:\tiny{$(2,0)$}] {} -- (1.5,-1);
\draw[dashed]  (1.5,-1) node[label=above:\tiny{$(1,0)$}] {} -- (3.0,-1);
\draw (3.0,-1) node[label=above:\tiny{$(2,0)$}] {} -- (4.5,-1)
             node[label=above:\tiny{$(3,1)$}] {} -- (6.0,-1)
             node[label=above:\tiny{$(1,0)$}] {};
\draw (0,-1);
 \node[draw=none, fill=none] at (3.0,-1.8) {$(P_1,\mathsf{m}_1)$};
\end{mygraph} \quad \quad
\begin{mygraph}[baseline=-3pt,thick,scale=0.6]
\pgftransformshift{\pgfpoint{8cm}{0cm}}
\draw (0,-1) node[label=above:\tiny{$(2,0)$}] {} -- (1.5,-1)
             node[label=above:\tiny{$(3,1)$}] {} -- (3.0,-1) 
             node[label=above:\tiny{$(1,0)$}] {};
\draw[dashed] (3.0,-1) -- (4.5,-1)
             node[label=above:\tiny{$(2,0)$}] {};
\draw (4.5,-1) -- (6.0,-1)
             node[label=above:\tiny{$(1,0)$}] {};
\draw (0,-1);
 \node[draw=none, fill=none] at (3.0,-1.8) {$(P_2,\mathsf{m}_2)$};
\end{mygraph}
\end{center}
The reader can verify that these two marked trees have the same $M$-polynomials by applying deletion-contraction to the dashed edges. It follows that they also have the same $D$-polynomial.

Recall that Theorem~\ref{thm:Dcore} implies that the $D$-polynomial cannot distinguish between a marked tree and its core. Thus, the cores of $(P_1,\mathsf{m}_1)$ and $(P_2,\mathsf{m}_2)$ also have the same $D$-polynomial. However, the $M$-polynomials of the cores are different, since the states model representation implies that the respective highest-degree terms are $z_{4,1}z_{2,0}^2z_{1,0}$ and $z_{3,1}z_{3,0}z_{2,0}z_{1,0}$.  This is an example of two different $M$-polynomials leading to the same $D$-polynomial, the reason that the $D$-polynomials are equal is because of cancellations that occur when we apply the undotting substitution, \emph{i.e.},  $z_{w,d}\mapsto D_{\bullet_{w,d}}$.
\end{example}

We are interested in determining if the $M$ and $D$ polynomials distinguish unweighted graphs, i.e., with all marks of the form $(1,0)$. However, we work with weighted graphs, i.e., graphs with all marks of the form $(w,0)$, because by Theorems~\ref{thm:Dcore} and~\ref{thm:dyx} to compute the $D$-polynomial we use the core of the graph, which is a weighted graph, and from the $D$-polynomial we compute the chromatic symmetric function.  Moreover, as the core of a graph does not have absorbable vertices, we restrict ourselves to the case of weighted trees with no absorbable vertices.

If there existed two non-isomorphic weighted trees $(T_1,\omega_1)$ and $(T_2,\omega_2)$ such that  $D_{(T_1,\omega_1)}=D_{(T_2,\omega_2)}$,  this would produce a pair of unweighted trees with the same chromatic symmetric function just by replacing each vertex of weight $(w,0)$ with an unweighted vertex and $w-1$ leaves adjacent to it. Hence, the problem of distinguishing unweighted trees with the chromatic symmetric function is equivalent to that of distinguishing weighted trees without absorbable leaves with the $D$-polynomial.

It is also unknown to the authors whether the $M$-polynomial distinguishes weighted trees, i.e. with all marks of the form $(w,0)$.  
\begin{question}
Is there a pair of weighted trees $(T_1,\omega_1)$ and $(T_2,\omega_2)$, different up to $\omega$-isomorphism, such that $M_{(T_1,\mathsf{m}_{\omega_1})}(\mathbf{z})=M_{(T_2,\mathsf{m}_{\omega_2})}(\mathbf{z})$?
\end{question}

 We remark that even if it was known that the $M$-polynomial distinguishes weighted trees up to weighted-isomorphism, this would not rule out the possibility of having a pair of non-isomorphic unweighted trees with the same chromatic symmetric function. In order to deduce this, one would need to show that the $M$-polynomial of a weighted tree can be recovered from its $D$-polynomial. This problem will be studied in detail in \cite{aliste2022markedII}. 
\subsection{On $M$-reconstructability}
In the remainder of this section we will see how the above programme can be applied to show that the chromatic symmetric function distinguishes proper trees, i.e. all the internal vertices are adjacent to at least one leaf,  of diameter smaller than $6$ up to isomorphism.  By the results of Section~\ref{sec:Dpoly}, this means that we need to study the $M$ and $D$-reconstructability of weighted trees up to diameter $3$. In other words, we will restrict our attention to weighted trees with up to one internal edge. It is easy to see that if $(T,\omega)$ is such a tree, then either $T$ is a star or is a \defn{$2$-star}, that is a tree formed by taking two stars and connecting their centers by an edge.

\begin{remark}

In what follows, we repeatedly use the states model representation of $M_{(T,\mathsf{m}_{\omega})}(\mathbf{z})$ given by Proposition~\ref{prop:smr} in combination with the undotting substitution $z_{w,d} = D_{\bullet_{w,d}}.$  Recall that given an $\MM$-partition $\lambda$ of length $l$, the term  $\mathbf{z}_\lambda$ is defined as $z_{\lambda_1}z_{\lambda_2}\cdots z_{\lambda_l}$, and if $f(\mathbf{z})$ is a polynomial, then  $[\mathbf{z}_\lambda]{f}(\mathbf{z})$ denotes the coefficient of the term $\mathbf{z}_\lambda$ in $f(\mathbf{z})$. Moreover, every multiset $\mathcal{W}$ of positive integers induces an $\MM$-partition $\{(w,0)\mid w\in\mathcal{W}\}$ and by a slight abuse of notation we let $\mathbf{z}_\mathcal{W}=\prod_{w\in    \mathcal{W}} z_{w,0}$. 
When $(T,\omega)$ is clear from the context, we shall write $M(\mathbf{z})$ and $D(\mathbf{z})$ instead of $M_{(T,\mathsf{m}_{\omega})}(\mathbf{z})$ and $D_{(T,\mathsf{m}_{\omega})}(\mathbf{z})$.
\end{remark}
We start by giving some basic facts that will be needed later:

\begin{prop}
\label{prop:leaves}
Let $(T,\omega)$ be a weighted tree. Then, the following quantities: 
\begin{enumerate}
    \item the number of vertices, edges and the total weight;
    \item the multiset of vertex weights; 
    \item the number of leaves and the multiset of leaf weights;
\end{enumerate}
can be recovered from the terms of degree one, two and the term of largest degree of $M(\mathbf{z})$.
\end{prop}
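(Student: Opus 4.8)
The plan is to extract all three families of data from specific pieces of the states model representation
\[
M_{(T,\mathsf{m}_\omega)}(\mathbf{z}) = \sum_{A\subseteq E(T)} \mathbf{z}_{\lambda(T,\mathsf{m}_\omega,A)},
\]
valid because $T$ is a tree so the $y$-dependence disappears (Proposition~\ref{prop:smr}). Since each vertex of $T$ has mark $(w,0)$ and $T$ has $n$ vertices, for $A\subseteq E(T)$ with $|A|=k$ the spanning subgraph $T|_A$ has exactly $n-k$ connected components, and the $\MM$-partition $\lambda(T,\mathsf{m}_\omega,A)$ consists of $n-k$ marks whose weights sum to $N:=\omega(T)$ and whose dot-counts sum to $k-(n-k)+1 = 2k-n+1$. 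Hence the term $\mathbf{z}_{\lambda(T,\mathsf{m}_\omega,A)}$ has total $z$-degree (counted as number of variables) equal to $n-k$; so the degree of a monomial of $M$ ranges from $n$ (when $k=0$) down to $1$ (when $k=n-1$, i.e.\ $A=E(T)$, forcing $T$ connected—here we use that $T$ is a tree).

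First I would read off item~(1). The term of largest degree corresponds uniquely to $A=\varnothing$: it is $\prod_{v\in V(T)} z_{\omega(v),0} = \mathbf{z}_{\mathcal{W}}$, where $\mathcal{W}$ is the multiset of vertex weights; its degree is $n=|V(T)|$, and reading off the subscripts gives $N=\omega(T)$; then $|E(T)|=n-1$. This simultaneously proves item~(2), since $\mathcal{W}$ is exactly the multiset of vertex weights recovered from that top-degree monomial. For item~(3), I would use the degree-one and degree-two terms. A degree-one term comes from $A=E(T)$ and equals $z_{N,0}$, giving no new information. The degree-two terms come from $A$ with $|A|=n-2$, i.e.\ from deleting exactly one edge $e$ from $T$; such an $A$ splits $T$ into two components whose weight-sums $(a,b)$ satisfy $a+b=N$, and the corresponding monomial is $z_{a,0}z_{b,0}$. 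The key observation is: $e$ is a pendant edge of $T$ if and only if one of $a,b$ equals the weight of the leaf it carries—more precisely, deleting a pendant edge with leaf of weight $w$ yields components of weights $w$ and $N-w$, whereas deleting an internal edge yields two components each of weight strictly between the minimum leaf weight and $N$. To make this precise I would argue that a weight $w$ appears as $\min(a,b)$ for some degree-two monomial $z_{a,0}z_{b,0}$ coming from a pendant-edge deletion precisely when $w$ is a leaf weight; comparing against the multiset $\mathcal{W}$ of \emph{all} vertex weights (already known) lets one identify which degree-two monomials arise from pendant-edge deletions—namely those $z_{w,0}z_{N-w,0}$ with $w$ equal to the weight of some vertex that is forced to be a leaf. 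The cleanest route: for each vertex weight $w\in\mathcal{W}$, the edge incident to a leaf of weight $w$ contributes the monomial $z_{w,0}z_{N-w,0}$, so the multiplicity of leaves of weight $w$ equals the number of such contributions, which I would isolate by a counting/inclusion argument on the coefficient $[z_{w,0}z_{N-w,0}]M(\mathbf{z})$ corrected for possible internal-edge contributions of the same shape.

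The main obstacle is precisely the last point: an internal edge could, a priori, also split $T$ into components of weights $w$ and $N-w$ with $w\in\mathcal{W}$, so the coefficient of $z_{w,0}z_{N-w,0}$ need not equal the number of leaves of weight $w$ on the nose. I expect to resolve this by noting that a component of an edge-deletion that is a single vertex must have weight equal to that vertex's weight, and that the set of edge-deletions producing a singleton component is exactly the set of pendant-edge deletions; so I would instead characterize leaf weights via the subset of degree-two monomials $z_{a,0}z_{b,0}$ for which $a$ (or $b$) equals a vertex weight \emph{and} the complementary part $b=N-a$ is consistent with removing that single vertex, then cross-check with $\mathcal{W}$. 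Concretely, the number of leaves equals the number of edges $e$ such that $T\setminus e$ has a singleton component, and this count, together with the identification of which singleton weights occur and with what multiplicity, is forced once both the top-degree monomial (giving $\mathcal{W}$ and $N$) and the list of degree-two monomials with multiplicities are known. Assembling these identifications yields all of (1)–(3), completing the proof.
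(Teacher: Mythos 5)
There is a genuine gap in your treatment of item (c), and it stems from a mis-reading of the states model. For a tree, the monomial attached to $A\subseteq E(T)$ in \eqref{Mformula1} records the \emph{total marks} of the components of $T|_A$, and since the dot-sum is $(w,d)\dotplus(w',d')=(w+w',d+d'+1)$, a component with $k_i$ vertices necessarily carries $k_i-1$ dots. Hence for $A=E\setminus\{e\}$ the degree-two monomial is $z_{a,k_1-1}\,z_{b,k_2-1}$ with $k_1+k_2=n$, \emph{not} $z_{a,0}z_{b,0}$ as you assert (likewise the degree-one term is $z_{N,n-1}$, not $z_{N,0}$; it already encodes $n$, though you recover $n$ correctly from the top-degree term instead). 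Discarding the dot indices is exactly what creates the ``main obstacle'' you describe, and your proposed repair does not close it: an internal edge can split $T$ into parts of weights $w$ and $N-w$ with $w$ a vertex weight, and the weights alone cannot distinguish this from a pendant-edge deletion. For instance, on the path with weights $1,1,2,4$ (in path order, $N=8$) the internal edge yields the split $(2,6)$ while $2$ is a vertex weight, so your cross-check against $\mathcal{W}$ would wrongly report a leaf of weight $2$; the actual leaf weights are $1$ and $4$.

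The missing idea is precisely the dot information you dropped: when $n>2$, a degree-two monomial has a factor of the form $z_{w,0}$ if and only if the deleted edge is pendant with a leaf of weight $w$, because an internal edge produces two factors each carrying at least one dot. So the multiset of leaf weights (and hence the number of leaves) is read off by collecting the degree-two terms containing a factor $z_{w,0}$, with the case $n\le 2$ handled separately since then every vertex is a leaf. This is how the paper argues. Your recovery of items (a) and (b) from the largest-degree monomial is correct and agrees with the paper, but as written the argument for (c) fails.
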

\begin{proof}
By Equation \eqref{Mformula1},  there is a unique term of degree one in $M(\mathbf{z})$ and this term is of the form $z_{N,n-1}$ where $N$ is the total weight of $(T,\omega)$ and $n$ is the number of vertices of $T$.  This gives us $(a)$.

 We also see that in Equation \eqref{Mformula1} the term of largest degree corresponds to the edge set $A=\emptyset$. Hence, this term has the form 
$\mathbf{z}_\mathcal{W}$, where $\mathcal{W}$ is the multiset of vertex  weights of $T$. This gives us $(b)$.

To get $(c)$ observe that the terms of degree two in $M(\mathbf{z})$ are in one-to-one correspondence with edge sets $A=E\setminus\{e\}$, where $e$ is an edge of $T$. We distinguish two cases: First, if the order of $T$ is at most two,  then every vertex is a leaf and thus the number of leaves and its multiset of weights can be deduced from $(a)$ and $(b)$. Second, if $T$ has more than two vertices, then each pendant edge $e$ is incident to a unique leaf $v$. Then we can set a bijective correspondence between each pendant edge and the monomial $z_{w,0}z_{N-w,n-2}$ in $M(\mathbf{z})$, where $w$ is the weight of $v$ and $N$ is the total weight of $(T,\omega)$. Note that $z_{w,0}z_{N-w,n-2}$ is indeed a monomial in $M(\mathbf{z})$ since we would obtain it by deleting $e$ and contracting all other edges in $T$, and these are the only monomials of this type.
This means that by collecting all the terms of degree two in $M$ which have a factor of the form $z_{i,0}$ we recover the multiset of leaf weights. Since the number of leaves coincides with the cardinality of this multiset, this gives us $(c)$. 
\end{proof}

\begin{prop}
\label{prop:starsrecognizable}
Let $(T,\omega)$ be a weighted tree. Then, we can recover whether $T$ is a star or a $2$-star from its  $M$-polynomial.
\end{prop}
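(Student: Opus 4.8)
The plan is to reduce the question to counting vertices and leaves, both of which are already recoverable by Proposition~\ref{prop:leaves}, and then to characterize stars and $2$-stars purely by their number of internal vertices.

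First I would apply Proposition~\ref{prop:leaves} to extract from $M(\mathbf{z})$ the number of vertices $n$ of $T$ (read off from the unique degree-one term $z_{N,n-1}$) and the number of leaves $\ell$ of $T$ (the cardinality of the recovered multiset of leaf weights, equivalently the number of degree-two terms of $M(\mathbf{z})$ having a zero-dot factor $z_{i,0}$, which correspond bijectively to the pendant edges). Consequently the number $n-\ell$ of internal (non-leaf) vertices of $T$ is determined by $M(\mathbf{z})$ as well.

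Next I would record the elementary combinatorial dichotomy for trees. A tree has at most one internal vertex exactly when it is a star: if it has no internal vertex then $n\le 2$, and if it has a unique vertex $v$ of degree $\ge 2$ then, since in a connected tree with $n\ge 3$ no leaf is adjacent to another leaf, every other vertex is a leaf adjacent to $v$, so $T=St_n$; conversely a star has at most one internal vertex. A tree has exactly two internal vertices exactly when it is a $2$-star: two internal vertices $u,v$ must be adjacent, for any interior vertex of the $u$--$v$ path would itself be internal; deleting the edge $uv$ then splits $T$ into a star centered at $u$ and a star centered at $v$, so $T$ is a $2$-star, and conversely the two centers of a $2$-star are precisely its internal vertices. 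Finally, a tree with three or more internal vertices is neither a star nor a $2$-star.

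Combining these observations, $T$ is a star if and only if $n-\ell\le 1$ and $T$ is a $2$-star if and only if $n-\ell=2$; since $n$ and $\ell$ are read off from $M(\mathbf{z})$, this determines which (if either) of the two cases holds. There is no substantive obstacle here: the proposition is essentially a corollary of Proposition~\ref{prop:leaves} together with the structure of trees of small diameter, and the only point requiring care is the bookkeeping in the degenerate cases $n\le 2$ (where $T$ is always a star) and matching them with the precise definitions of star and $2$-star.
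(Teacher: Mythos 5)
Your proposal is correct and follows essentially the same route as the paper: recover $n$ and the number of leaves via Proposition~\ref{prop:leaves}, then observe that stars and $2$-stars are exactly the trees with $n-1$ and $n-2$ leaves respectively (equivalently, at most one or exactly two internal vertices). The extra care you take with the degenerate cases $n\le 2$ is a welcome refinement but does not change the argument.
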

\begin{proof}
By Proposition~\ref{prop:leaves} (a) and (c) we recover $n$  the number of vertices of $T$ and the number of leaves from the $M$-polynomial. Since stars can be characterized as the only trees with $n-1$ leaves and $2$-stars can be characterized as the only trees with $n-2$ leaves, this finishes the proof. 
\end{proof}

\begin{theorem}\label{thm:starsMreconstr}
Let $(T,\omega)$ be a weighted star. Then it can be reconstructed from its $M$-polynomial.
\end{theorem}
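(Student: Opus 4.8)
The plan is to reduce everything to the two preparatory results, Proposition~\ref{prop:starsrecognizable} and Proposition~\ref{prop:leaves}. Since being a star is an $M$-polynomial invariant by Proposition~\ref{prop:starsrecognizable}, it suffices to show that, among weighted stars, the $\omega$-isomorphism type of $(T,\omega)$ is determined by $M_{(T,\mathsf{m}_\omega)}(\mathbf{z})$. Recall that a weighted star on at least three vertices is determined up to $\omega$-isomorphism by the weight $c$ of its center together with the multiset $\mathcal{L}$ of weights of its leaves, while if $T$ has only one or two vertices there is no distinguished center and the data is simply the multiset of one or two vertex weights.

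First I would dispatch the small cases. If $T$ has at most two vertices, then by Proposition~\ref{prop:leaves}(a) and (b) the number of vertices $n$ and the multiset $\mathcal{W}$ of all vertex weights are read off from the degree-one term and the top-degree term of $M(\mathbf{z})$; this multiset (a single weight when $n=1$, the pair of endpoint weights when $n=2$) already determines $(T,\omega)$ up to $\omega$-isomorphism.

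Now assume $n\geq 3$. By Proposition~\ref{prop:leaves} we recover from $M(\mathbf{z})$ the integer $n$, the multiset $\mathcal{W}$ of all vertex weights, and the multiset $\mathcal{L}$ of leaf weights. Since $T$ is a star on $n\geq 3$ vertices, it has a unique vertex of degree at least two, its center, while the remaining $n-1$ vertices are leaves; hence $\mathcal{L}$ has exactly $n-1$ elements, and the multiset difference $\mathcal{W}\setminus\mathcal{L}$ consists of a single integer, which must be the weight $c$ of the center. Therefore both $c$ and $\mathcal{L}$ are determined by $M(\mathbf{z})$, and so is $(T,\omega)$ up to $\omega$-isomorphism.

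The argument is short because the real work is already contained in Proposition~\ref{prop:leaves}; the only point requiring a little care is the recovery of $\mathcal{L}$, namely checking that for a star on $n\geq 3$ vertices each degree-two monomial of $M(\mathbf{z})$ coming from a pendant edge has exactly one factor of the form $z_{w,0}$ (the leaf weight), the other factor carrying $n-2\geq 1$ dots. This is precisely what makes the ``collect the degree-two terms with a $z_{i,0}$ factor'' recipe of Proposition~\ref{prop:leaves}(c) unambiguous, and it is exactly the step that degenerates (and hence is treated separately above) when $n=2$. Beyond this bookkeeping I do not expect any genuine obstacle.
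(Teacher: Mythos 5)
Your proposal is correct and takes essentially the same route as the paper: the paper's proof is a one-line appeal to Proposition~\ref{prop:leaves}, noting that a weighted star is determined by its multiset of leaf weights together with the total weight (from which the center weight follows), which is exactly your argument with the small cases and the recovery of the center weight via $\mathcal{W}\setminus\mathcal{L}$ spelled out a bit more explicitly. The extra care about $n\leq 2$ is already absorbed into the paper's proof of Proposition~\ref{prop:leaves}(c), so nothing new is needed.
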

\begin{proof}
This follows directly from Proposition~\ref{prop:leaves}, since weighted stars are determined up to $\omega$-isomorphism by the multiset of leaf weights and the total weight.
\end{proof}

Before stating the next lemma, recall that two edges are \defn{independent} if they do not share a vertex.
\begin{lemma}\label{l:mdeg-1}
Let $(T,\omega)$ be a weighted tree. Let $\mathcal{W}$ be the multiset of vertex weights of $(T,\omega)$ and be $w_0, w_1, w_2$ be three (not necessarily distinct) numbers occurring in $\mathcal{W}$. 
Then, the following statements hold: 
\begin{enumerate}
    \item The coefficient
\[\alpha(w_1,w_2):=[z_{w_1+w_2,1}\,\mathbf{z}_{\mathcal{W}\setminus\{w_1,w_2\}}]M(\mathbf{z})\]
counts the number of edges joining a vertex of weight $w_1$ with a vertex of weight $w_2$. Here $\mathcal{W}\setminus\{w_1,w_2\}$ denotes removing one occurrence of each $w_1$ and $w_2$ from $\mathcal{W}$.
\item 
 The coefficient
\[\beta(w_0,w_1,w_2):=[z_{w_0+w_{1},1}\, z_{w_0+w_{2},1}\, \mathbf{z}_{\mathcal{W}\setminus\{w_0,w_0,w_1,w_2\}}]M(\mathbf{z})\]
counts sets of independent edges $\{e_1,e_2\}$ where each $e_i$ connects a vertex of weight $w_0$ with a vertex of weight $w_i$. 
Here $\mathcal{W}\setminus\{w_0,w_0,w_1,w_2\}$ denotes removing two occurrences of $w_0$ and then one occurrence of each $w_1$ and $w_2$ from $\mathcal{W}\setminus\{w_0,w_0\}$. Note that if there are not enough occurrences of either $w_0,w_1$ or $w_2$ then $\beta(w_0,w_1,w_2)$ is defined to be $0$.
\end{enumerate}
\end{lemma}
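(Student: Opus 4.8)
The proof will rely on the states model representation~\eqref{Mformula1} of the $M$-polynomial for trees, which says that for $(T,\mathsf{m}_\omega)$ every monomial is exactly $\mathbf{z}_{\lambda(T,\mathsf{m}_\omega,A)}$ for some edge subset $A\subseteq E(T)$, each appearing with coefficient $1$ (since for forests the exponent $|A|-r_T(A)$ vanishes). Thus the coefficient of a given monomial in $M(\mathbf{z})$ counts exactly the number of $A\subseteq E(T)$ whose induced $\MM$-partition equals the prescribed multiset of marks. The whole argument is then a matter of identifying which edge sets $A$ produce the two specified monomials.

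For part~(1): I would argue that a monomial of the form $z_{w_1+w_2,1}\,\mathbf{z}_{\mathcal{W}\setminus\{w_1,w_2\}}$ can only arise from an edge set $A$ whose spanning subgraph $T|_A$ has exactly one component that is a single edge (contributing the mark $(w_1+w_2,1)$, since two weight-$(w,0)$ vertices dot-sum to $(w_1+w_2,1)$) and all other components are isolated vertices whose marks $(w,0)$ multiply to $\mathbf{z}_{\mathcal{W}\setminus\{w_1,w_2\}}$. Since $T$ is a tree, $|A|=1$ forces $A=\{e\}$ for a single edge $e$, and the component structure forces $e$ to join a vertex of weight $w_1$ to a vertex of weight $w_2$; conversely every such edge gives this monomial. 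Hence the coefficient counts exactly those edges. A small subtlety to address carefully: one must check that no edge set of size $\geq 2$ can produce the same monomial — this follows because any $A$ with $|A|\geq 2$ in a forest yields an $\MM$-partition with total dot-count $\geq 2$ across the non-isolated components (more precisely, a component of $T|_A$ that is a subtree with $k$ edges gives a mark with $k$ dots), so the monomial would contain a variable $z_{\ast,d}$ with $d\geq 2$ or two variables with $d\geq 1$, neither of which matches $z_{w_1+w_2,1}\,\mathbf{z}_{\mathcal{W}\setminus\{w_1,w_2\}}$.

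For part~(2): I would apply the same reasoning. A monomial $z_{w_0+w_1,1}\,z_{w_0+w_2,1}\,\mathbf{z}_{\mathcal{W}\setminus\{w_0,w_0,w_1,w_2\}}$ forces $T|_A$ to have exactly two single-edge components and all remaining components isolated vertices; in a forest this forces $|A|=2$, say $A=\{e_1,e_2\}$, and the two edges must be vertex-disjoint (i.e.\ independent) since otherwise they would share a vertex and form a single component with $2$ dots, giving a variable $z_{\ast,2}$. One of the two endpoints of $e_1$ has weight $w_0$ and the other $w_1$, and similarly $e_2$ connects weights $w_0$ and $w_2$; conversely every such independent pair yields this monomial. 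The edge-case where $w_0$, $w_1$ or $w_2$ does not occur with sufficient multiplicity in $\mathcal{W}$ is handled by the convention in the statement: then there are no such edge sets, the monomial does not appear, and the coefficient is $0$, consistent with $\beta(w_0,w_1,w_2):=0$.

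The main obstacle — and the only place requiring genuine care rather than bookkeeping — is the uniqueness/injectivity argument: verifying that the prescribed monomials are produced \emph{only} by the edge sets claimed, and that distinct such edge sets yield distinct contributions (they all contribute $1$, so this is automatic, but one must be sure no larger or differently-shaped $A$ sneaks in). The clean way to dispatch this is the observation that in a forest, a connected component of $T|_A$ with exactly $j$ edges contributes a mark whose dot-count is precisely $j$ (by induction on $j$ using the definition of $\dotplus$), so the multiset of dot-counts of the marks in $\lambda(T,\mathsf{m}_\omega,A)$ is determined by, and determines, the component edge-counts of $T|_A$; matching this against the target monomial pins down $|A|$ and the shape of $T|_A$ exactly.
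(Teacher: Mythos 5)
Your overall strategy is the same as the paper's: use the states-model representation \eqref{Mformula1}, note that for a tree every term is $\mathbf{z}_{\lambda(T,\mathsf{m}_\omega,A)}$ with coefficient $1$, and then identify which edge sets $A$ produce the prescribed monomial. Your observation that a component of $T|_A$ with $j$ edges carries a mark with exactly $j$ dots is a clean way to force $|A|=1$ in part (1), and $|A|=2$ with the two edges independent in part (2); the paper reaches the same conclusion by reading off the degree of the monomial, which for a forest on $n$ vertices equals $n-|A|$. Part (1) is correct as you state it, since the multiset $\mathcal{W}\setminus\{a,b\}$ of surviving isolated-vertex weights determines $\{a,b\}$.

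In part (2), however, there is a step you assert without justification, and it is precisely the point the paper spends the last third of its proof on. Once you know $A=\{e_1,e_2\}$ with $e_1,e_2$ independent, matching the monomial only tells you two things: the multiset of edge weight-sums is $\{w_0+w_1,\,w_0+w_2\}$, and the multiset of the four endpoint weights is $\{w_0,w_0,w_1,w_2\}$. It does \emph{not} immediately follow that one edge has endpoints of weights $w_0,w_1$ and the other $w_0,w_2$: a priori the four weights could split across the two edges as $\{w_0,w_0\}$ and $\{w_1,w_2\}$. One must check that this alternative is either incompatible with the edge weight-sums (when $w_0\notin\{w_1,w_2\}$, since then $2w_0\neq w_0+w_i$ for $i=1,2$), or coincides with the desired splitting as a multiset of pairs (when $w_0\in\{w_1,w_2\}$). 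This is only a short case analysis, but without it the sentence ``One of the two endpoints of $e_1$ has weight $w_0$ and the other $w_1$'' is unsupported. The rest of your argument, including the handling of insufficient multiplicities, is fine.
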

\begin{proof}
Let $n$ be the number of vertices of $T$. Since $T$ is a tree, every term of degree $n-1$ in Equation \eqref{Mformula1} comes from contracting an edge and deleting all the others. Indeed, if $e$ is an edge whose endpoints have weights $w_1$ and $w_2$, then $e$ induces the monomial 

\begin{equation}\label{eq:mon}
z_{w_1+w_2,1}\,\mathbf{z}_{\mathcal{W}\setminus\{w_1,w_2\}}.
\end{equation}
Let $w_1'$ and $w_2'$ be any other weights in $\mathcal{W}$. Since $\mathcal{W}\setminus{\{w_1, w_2\}}= \mathcal{W}\setminus{\{w_1',w_2'\}}$ if and only if $\{w_1,w_2\}=\{w_1',w_2'\}$, it follows that two edges induce the same term if and only if they have endpoints with the same weights. The first  assertion now follows.

For the second assertion, observe that every term of degree $n-2$ in  \eqref{Mformula1} comes from contracting two (distinct) edges $e_1$ and $e_2$ and deleting  the rest of the edges. Let $a,b$ be the weights of the endpoints of $e_1$ and $c,d$ be the weights of the endpoints of $e_2$. If $e_1$ and $e_2$ are independent, then they must induce a monomial of the form 
$z_{a+b,1}\,z_{c+d,1}\,\mathbf{z}_{\mathcal{W}\setminus\{a,b,c,d\}}$.
Otherwise $e_1$ and $e_2$ share a vertex, in which case they  must induce a monomial of the form 
$z_{a+b+d,2}\,\mathbf{z}_{\mathcal{W}\setminus\{a,b,d\}},$
where we assumed without loss of generality that $b=c$ is the weight of the common vertex. It follows that 
the coefficient $\beta(w_0,w_1,w_2)$ counts sets of independent edges $\{e_1,e_2\}$ such that:
\begin{enumerate}[(i)]
    \item for each $i\in\{1,2\}$, the total weight of $e_i$ is $w_0+w_i$;
    \item the multiset of weights of the vertices of the graph induced by $\{e_1,e_2\}$ is $\{w_0,w_0,w_1,w_2\}$. 
\end{enumerate}
Note that if $w_1=w_0$, then from (ii) it must be the case that $e_1$ is incident to vertices of weight $w_0$ and $w_1$, and $e_2$ is incident to vertices of weight $w_0$ and $w_2$. The same happens if $w_2=w_0$. Finally, if $w_0\not\in\{w_1,w_2\}$, then from (i) and (ii) it follows that each $e_i$ is incident with vertices of weight $w_0$ and $w_i$ since the only other possibilities are that each $e_i$ is incident with vertices of weight $\{w_0,w_0\}$ or $\{w_1,w_2\}$ but these contradict (i) since $w_0\neq w_1$ and $w_0\neq w_2$.
\end{proof}

\begin{theorem}\label{t:m2star}
Let $(T,\omega)$  be a weighted $2$-star. Then it can be reconstructed from its $M$-polynomial.
\end{theorem}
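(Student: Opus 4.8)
The plan is to reconstruct $(T,\omega)$ from $M(\mathbf{z})$ in two stages: first pin down the skeleton of the $2$-star — its two centers, their weights and the multiset of leaf weights — and then recover how the leaves are distributed between the two centers. First I would apply Proposition~\ref{prop:leaves} to read off from the degree-one, degree-two and top-degree terms of $M(\mathbf{z})$ the number $n$ of vertices, the total weight, the multiset $\mathcal{W}$ of vertex weights and the multiset $\mathcal{L}$ of leaf weights. Since $T$ is a $2$-star (which is in any case detectable by Proposition~\ref{prop:starsrecognizable}), it has exactly two internal vertices $c_1,c_2$, its centers, and the multiset of their weights is the multiset difference $\mathcal{W}\setminus\mathcal{L}$; write $a$ for the weight of $c_1$ and $b$ for that of $c_2$. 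For a weight value $w$ let $\ell_w$ be its multiplicity in $\mathcal{L}$, and let $x_w$ and $y_w=\ell_w-x_w$ be the numbers of weight-$w$ leaves attached to $c_1$ and to $c_2$. The $\omega$-isomorphism type of $T$ is determined by $\big(a,b,(x_w)_w\big)$ modulo the involution exchanging $\big(a,(x_w)_w\big)$ with $\big(b,(y_w)_w\big)$, so it suffices to recover $w\mapsto x_w$ up to that flip. I would then split into the cases $a\ne b$ and $a=b$.

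If $a\ne b$, the first-order coefficients $\alpha(\cdot,\cdot)$ of Lemma~\ref{l:mdeg-1}(1) already suffice. Every leaf is adjacent only to a center, and the unique internal edge $c_1c_2$ carries the weight pair $\{a,b\}$; hence for $w\ne b$ the only edges joining a weight-$a$ vertex to a weight-$w$ vertex are the $x_w$ edges from $c_1$ to the weight-$w$ leaves at $c_1$, so $\alpha(a,w)=x_w$. Symmetrically $\alpha(b,b)=y_b$, whence $x_b=\ell_b-\alpha(b,b)$. This determines $(x_w)_w$, hence $T$.

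The case $a=b$ is the crux, since now $\alpha(a,w)=\ell_w$ for every $w$ and the first-order data sees nothing of the split between the two now indistinguishable centers; here I would turn to the second-order coefficients $\beta(a,\cdot,\cdot)$ of Lemma~\ref{l:mdeg-1}(2). A pair of independent edges, each joining a weight-$a$ vertex to a weight-$w$ (resp.\ weight-$w'$) vertex, must consist of one center-to-leaf edge at $c_1$ and one at $c_2$: two edges at the same center share that center, and the internal edge $c_1c_2$ shares a vertex with every center-to-leaf edge — and this holds even when $w$ or $w'$ equals $a$, with $x_a,y_a$ then counting the weight-$a$ leaves at $c_1,c_2$. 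Consequently
\[\beta(a,w,w)=x_w y_w=x_w(\ell_w-x_w),\qquad \beta(a,w,w')=x_w y_{w'}+x_{w'} y_w\ \ (w\ne w').\]
From $\beta(a,w,w)$ and $\ell_w$ I recover the \emph{unordered} pair $\{x_w,y_w\}$ for every leaf weight $w$. To synchronize these choices across leaf weights, observe that the two values of $\beta(a,w,w')$ coming from the two relative orientations of $\{x_w,y_w\}$ and $\{x_{w'},y_{w'}\}$ differ by $(p-q)(r-s)$ with $\{p,q\}=\{x_w,y_w\}$ and $\{r,s\}=\{x_{w'},y_{w'}\}$, which is nonzero whenever both pairs are unbalanced; so $\beta(a,w,w')$ tells us whether the $c_1$-counts of $w$ and $w'$ are aligned or anti-aligned. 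These comparisons come from a genuine tree, hence are mutually consistent; fixing the orientation at one unbalanced leaf weight (balanced ones have $x_w=\ell_w/2$ forced) pins down $(x_w)_w$, and the opposite choice at the reference produces exactly the global flip, i.e.\ the same $2$-star. Thus $T$ is reconstructed up to $\omega$-isomorphism.

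The step I expect to be the main obstacle is precisely this last case: first-order information is powerless when the centers have equal weight, one is forced into the $\beta$-coefficients, a single $\beta(a,w,w)$ yields only an unordered pair, and one must verify that the cross-terms $\beta(a,w,w')$ glue these local ambiguities together coherently. The routine-but-necessary bookkeeping is to confirm that the small degenerate configurations — a center weight occurring also as a leaf weight, leaf weights of multiplicity $\le 2$, and the ``not enough occurrences'' conventions of Lemma~\ref{l:mdeg-1} — are all compatible with the displayed identities for $\alpha$ and $\beta$.
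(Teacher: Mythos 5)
Your proof is correct and follows essentially the same route as the paper: recover the global data via Proposition~\ref{prop:leaves}, use the degree-$(n-1)$ coefficients $\alpha$ of Lemma~\ref{l:mdeg-1} when the centers have distinct weights, and in the equal-weight case recover each unordered pair $\{x_w,y_w\}$ from the quadratic $x_w+y_w=\ell_w$, $x_wy_w=\beta(a,w,w)$ and then align the pairs using the cross-coefficients $\beta(a,w,w')$, fixing one unbalanced reference weight. The only (welcome) difference is in the $a\neq b$ case, where your choice of $\alpha(a,w)$ for $w\neq b$ together with $\alpha(b,b)$ cleanly sidesteps the paper's three-way case analysis over whether the center weights also occur as leaf weights.
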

\begin{proof}
Let $(T,\omega)$ be a $2$-star. Let $N$ be the total weight of $(T,\omega)$, $n$ its number of vertices, $\mathcal{W}$ the multiset of vertex weights and 
 $\mathcal{W}_L$ the multiset  of leaf weights of $(T,\omega)$. All of these can be reconstructed from $M(\mathbf{z})$ by Proposition~\ref{prop:leaves}.
 
By comparing $\mathcal{W}$ with $\mathcal{W}_L$ we recover the weights $\{u_0,u_1\}$ of the centers of $T$. Observe that it might happen that $u_0=u_1$. For $k\geq 1$, let  $w_1,\ldots,w_{k}$ denote all the distinct leaf weights and set $\mu_j$ to denote the multiplicity of $w_j$ in $\mathcal{W}_L$ for every $j\in\{1,\ldots,k\}$. Note there may be leaves with the same weights as the weights of the centers.

Assume first that the centers have different weights: $u_0\neq u_1$. In this case, we will be able to reconstruct $(T,\omega)$ from the terms of degree $n-1$ in the states model representation of $M(\mathbf{z})$. Let $v_0$ be the central vertex with weight $u_0$ and $v_1$ the central vertex with weight $u_1$. For each $i\in\{0,1\}$, $j\in\{1,\ldots,k\}$, let $L_{i,j}$ be the number of pendant edges incident with $v_i$ and a leaf of weight $w_j$. Suppose first that there are no leaves with weight $u_0$ or $u_1$. By Lemma~\ref{l:mdeg-1} (a), the coefficient $\alpha(u_i,w_j)$ counts the number of edges joining a vertex of weight $u_i$ with a vertex of weight $w_j$. But in this case $v_i$ is the unique vertex with weight $u_i$ and $w_j\neq u_{1-i}$. It follows that every edge being counted by $\alpha(u_i,w_j)$ must be a pendant edge joining the central vertex $v_i$ with a leaf of weight $w_j$. Hence in this case 
\[ L_{i,j} = \alpha(u_i,w_j)\quad\text{for all }i\in\{0,1\},j\in\{1,\ldots,k\}.\]

For the next case, suppose that there are leaves with weight $u_1$ but there are no leaves with weight $u_0$. That is, we assume that  there is $j_0\in\{1,\ldots,k\}$ such that $w_{j_0}=u_{1}$ but $w_{j}\neq u_{0}$ for all $j$. In this case, by using the same argument as before we obtain 
\[L_{i,j} = \alpha(u_i,w_j)\quad \text{for all } i\in\{0,1\},j\in\{1,\ldots,k\} \text{ such that } (i,j)\neq (0,j_0).\]
If $(i,j)=(0,j_0)$ then the central edge is an edge joining a vertex of weight $u_0$ with a vertex of weight $u_1=w_{j_0}$. It follows from Lemma~\ref{l:mdeg-1} (a) that $\alpha(u_0,w_{j_0}) = 1 + L_{0,j_0}$, from which we deduce that $L_{0,j_0}=\alpha(u_0,w_{j_0})-1$. Since this was the only missing leaf number, this finishes  the reconstruction in this case. The case when there are leaves with weight $u_0$ but no leaf has weight $u_1$ is analogous.

Finally, assume that there are leaves with weight $u_0$ and $u_1$; that is, there exist $j_0$ and $j_1$ such that $w_{j_i}=u_{1-i}$ for $i\in\{0,1\}$. In this case, by the same argument as before we get 
\[L_{i,j} = \alpha(u_i,w_j)\quad \text{for all } i\in\{0,1\},j\in\{1,\ldots,k\} \text{ such that } (i,j)\not\in\{ (0,j_0),(1,j_1)\}.\]
Thus, it remains to deduce the values of $L_{0,j_0}$ and $L_{1,j_1}$. On the one hand, edges joining a vertex of weight $u_0$ with a vertex of weight $u_1$ come in three types: pendant edges joining leaf of weight $w_{j_0}$ with  the center $v_0$; the central edge; and pendant edges connecting  a leaf of weight $w_{j_1}=u_0$  with the center $v_1$, which has weight $u_1=w_{j_0}$. Thus we have  
\[ \alpha(u_0,w_{j_0})= 1 + L_{0,j_0} + L_{1,j_1}.\]
On the other hand, the total number of leaves of weight $w_{j_0}$ is $\mu_{j_0}$. This implies that 
$L_{0,j_0} + L_{1,j_0} =\mu_{j_0}$. But $L_{1,j_0}=\alpha(u_1,j_0)$ so  we can solve these equations, which yields
\[ L_{0,j_0} = \mu_{j_0}-\alpha(u_1,w_{j_0})\quad\text{and}\quad L_{1,j_1} = \alpha(u_0,w_{j_0})-1-\mu_{j_0}+\alpha(u_1,w_{j_0}).\]
This finishes the reconstruction in this case. 

Assume now that $u_0=u_1$. The previous argument does not work since we cannot distinguish the centers by their weights. Nevertheless, we fix (at random) a labeling of the centers as $v_0$ and $v_1$ and let $L_{i,j}$ be the same as before.

By Lemma~\ref{l:mdeg-1} (b), for every $j_1,j_2$ in  $\{1,\ldots,k\}$, the coefficient $\beta(u_0,w_{j_1},w_{j_2})$  counts pairs of independent edges $e_1,e_2$ such that $e_i$ has endpoints with weights $u_0$ and $w_{j_i}$ for $i\in\{1,2\}$. Since $T$ is a 2-star, it follows that $e_1$ and $e_2$ must be  pendant edges that are incident to different centers. Thus, 
\begin{equation}
\label{eq:betaj1j2}
    \beta(u_0,w_{j_1},w_{j_2})=\begin{cases}L_{0,j_1}L_{1,j_1},&\text{if $j_1=j_2$};\\
L_{0,j_1}L_{1,j_2}+L_{0,j_2}L_{1,j_2},&\text{otherwise}.
\end{cases}
\end{equation}

For each weight $w_j$,  $j\in \{1, \ldots k\}$, we will determine the number of leaves of weight $w_j$ attached to each vertex from Equation \eqref{eq:betaj1j2} and the total number $\mu_j$ of  leaves with weight $w_j$.  Let $j_1=j_2=j$. It follows that 
\begin{equation}\label{eq:systemL}
L_{0,j} + L_{1,j} = \mu_j \quad\text{and}\quad L_{0,j}L_{1,j} = \beta(u_0,w_j,w_j)\quad\text{for all $j\in \{1,\ldots,k\}$}.
\end{equation}
This system has a unique solution $L_{0,j}=L_{1,j}=\mu_j/2$ if and only if  $\Delta_j := \mu_j^2-4\beta(u_0,w_j,w_j)=0$. Note that for every $j$, the discriminant $\Delta_j$ can be computed from $M(\mathbf{z})$. Note also that in the case that $\Delta_j\neq 0$ the previous system has two symmetric solutions. The symmetry follows from the fact that interchanging the labeling of the centers interchanges the values of $L_{0,j}$ and $L_{1,j}$ for all $j$.

This means that if $\Delta_j=0$ for all $j\in\{1,\ldots,k\}$ then both centers are adjacent to the same number of leaves of weight $w_j$ for all $j$. In this case we just place $\mu_j/2$ pendant edges of weight $w_j$ on each center and this finishes the reconstruction.

If $\Delta_{j_1}\neq 0$ for some $j_1\in \{1, \ldots, k\}$, then  there are two distinct symmetric solutions for $(L_{0,j_1},L_{1,j_1})$ for the system of equations \eqref{eq:systemL}
(this implies that $L_{0,j_1}\neq L_{1,j_1}$). 
We choose one of these solutions at random and attach leaves of weight $w_{j_1}$ to the centers  according with this solution. Now that  the leaves of weight $w_{j_1}$ have been placed the centers are distinguishable.
To place the other leaves we will use the monomials in Lemma~\ref{l:mdeg-1} (b) for each $j_2\neq j_1$. We know by the previous discussion that $\beta(u_0,w_{j_1},w_{j_2})$ counts the number of leaves of weight $w_{j_1}$ and $w_{j_2}$ attached to different centers. It follows that 
\[
L_{0,j_2} + L_{1,j_2} = \mu_{j_2}\quad\text{and}\quad
L_{0,j_1}L_{1,j_2}+L_{0,j_2}L_{1,j_1} = \beta(u_0,w_{j_1},w_{j_2}).
\]
Since $L_{1,j_1}\neq L_{0,j_1}$ are known, this is now a linear system with a unique solution $(L_{0,j_2},L_{1,j_2})$. Thus we can attach $L_{0,j_2}$  leaves of weight $w_{j_2}$ to the center $v_0$ and $L_{1,j_2}$ leaves of weight $w_{j_2}$ to the center $v_1$. Since we can do this for all $j_2\neq j_1$, this finishes the reconstruction of $(T,\omega)$. To finish the proof we only need to note that by starting with the other solution for $j_1$, we would have obtained the same weighted tree, up to $\omega$-isomorphism. 
\end{proof}

\subsection{On $D$-reconstructability}
Now we turn to the question of reconstructing weighted stars and $2$-stars from the $D$-polynomial. 

\begin{definition}
Let $(T,\omega)$ be a weighted tree and $(T,\mathsf{m}_\omega)$ its marked version. Given $k$, let $M_k(\mathbf{z})$ be the homogeneous component of degree $k$ in $M_{(T,\mathsf{m}_\omega)}(\mathbf{z})$ and $D_k(\mathbf{z}):=M_k(z_{w,d}=D_{\bullet_{w,d}})$.
\end{definition}

\begin{lemma}\label{lem:MkDk}
Let $(T,\omega)$ be a strictly weighted tree of order $n$ and total weight $N$. Then the following assertions hold: 
\begin{enumerate}
    \item Let $0\leq k\leq n$. Then, every monomial in $D_k(\mathbf{z})$ is of the form  
$z_1^i {z}_{\lambda_1
}z_{\lambda_2}\cdots z_{\lambda_k}$ where $0\leq i\leq n-k$ and $\lambda$ is a partition of $N-i$ having all parts different from $1$. In particular, $D(\mathbf{z})$ has degree $n$. 
    \item For every monomial in $D(\mathbf{z})$ of the form $z_{w_1}\cdots z_{w_k}$, where $\sum_i w_i=N$  and $w_j\geq 2$ for all $j$, we have 
    \[ [z_{w_1}\cdots z_{w_k}]D(\mathbf{z})=\sum_{\mathbf{d}}[z_{w_1,d_1}\cdots z_{w_k,d_k}]M(\mathbf{z}),\]
    where the sum ranges over all tuples $\mathbf{d}=(d_1,\ldots,d_k)$ of non-negative integers such that $\sum d_i = n - k$.
\end{enumerate}
\end{lemma}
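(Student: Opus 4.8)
The plan is to read both statements off the states model representation of the $M$-polynomial (Proposition~\ref{prop:smr}) specialized to a tree, and then track what the undotting substitution $z_{w,d}=D_{\bullet_{w,d}}$ does to each monomial. The first step is to record the shape of $M_{(T,\mathsf m_\omega)}$: since $T$ is a tree, Proposition~\ref{prop:smr} gives $M_{(T,\mathsf m_\omega)}(\mathbf z)=\sum_{A\subseteq E(T)}\mathbf z_{\lambda(T,\mathsf m_\omega,A)}$ (no $y$), where $\lambda(T,\mathsf m_\omega,A)$ has one part $(W_C,D_C)$ per connected component $C$ of $T|_A$, with $W_C$ the sum of the weights in $C$ and $D_C=|C|-1$. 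Consequently a term with $k$ factors comes exactly from the $A$ with $|A|=n-k$; for such a term the weights of the parts sum to $N$, the dot-entries sum to $\sum_C(|C|-1)=n-k$, and — the point that makes everything work — every part $(W_C,D_C)$ is \emph{strict}, i.e.\ $W_C\ge D_C+2$, because $C$ contains a vertex of weight $\ge 2$ and the dot-sum of a strict mark with anything is strict. Thus $M_k(\mathbf z)=\sum_{|A|=n-k}\mathbf z_{\lambda(T,\mathsf m_\omega,A)}$ is a positive sum of monomials $z_{W_1,D_1}\cdots z_{W_k,D_k}$ of this kind.

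For part (1), I substitute $z_{w,d}=D_{\bullet_{w,d}}=\sum_{i=0}^d(-1)^i\binom di z_{w-i,0}z_{1,0}^{\,i}$ (Equation~\eqref{def:bullet}) into each such $M_k$-monomial and expand. After identifying $z_{w,0}$ with $z_w$, a typical resulting monomial of $D_k$ is
\[
(-1)^{i}\Big(\prod_{j=1}^k\binom{D_j}{i_j}\Big)\, z_1^{\,i}\, z_{W_1-i_1}\cdots z_{W_k-i_k},\qquad i=\sum_{j} i_j,\ \ 0\le i_j\le D_j .
\]
Because every part is strict, $W_j-i_j\ge W_j-D_j\ge 2$, so $\lambda:=(W_1-i_1,\dots,W_k-i_k)$ is a partition of $N-i$ into exactly $k$ parts, each $\ge 2$; and $0\le i\le \sum_j D_j=n-k$. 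This is exactly the asserted form. For the final clause, every monomial of $D=\sum_k D_k$ has degree $i+k\le n$, while the monomial $\mathbf z_{\mathcal W}$ formed from the multiset $\mathcal W$ of vertex weights is $z_1$-free and can only be produced from $M$-monomials with $k=n$ factors, i.e.\ from $A=\emptyset$, where it occurs with coefficient $1$; hence $\deg D=n$.

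For part (2), observe that a monomial $z_{w_1}\cdots z_{w_k}$ with all $w_j\ge 2$ is $z_1$-free, so by the displayed expansion it can only receive contributions from the $i_j=0$ branch of every factor, and that branch sends $z_{W,D}\mapsto z_W$ with coefficient $1$. Hence $[z_{w_1}\cdots z_{w_k}]D$ equals the sum of the $M$-coefficients over the monomials of $M$ whose weight multiset is $\{w_1,\dots,w_k\}$; by the first paragraph each such monomial has $k$ factors, comes from an $A$ with $|A|=n-k$, and can be written $z_{w_1,d_1}\cdots z_{w_k,d_k}$ with $\sum_j d_j=n-k$, which gives the claimed identity. (When the $w_j$ are not pairwise distinct, the sum over tuples $\mathbf d$ must be read as a sum over the distinct monomials so obtained — equivalently, over multisets $\{d_1,\dots,d_k\}$ — so that no $M$-monomial is counted more than once.)

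The only place where the hypotheses are genuinely used, and the step I expect to need the most care, is the strictness bound $W_j-i_j\ge W_j-D_j\ge 2$: it is what guarantees that the $i_j=0$ branch never contributes a factor $z_1$, and conversely that every $z_1$-free monomial of $D$ is accounted for by that branch alone. Everything else is bookkeeping on the states model; the one genuine subtlety beyond that is matching the tuple-indexed sum in (2) with the honestly distinct monomials of $M$ when vertex weights repeat.
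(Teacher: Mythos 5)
Your proof is correct and follows essentially the same route as the paper's: read the shape of $M_k$ off the states model for a tree, use strictness to guarantee $w_j-i_j\ge 2$ in every undotted factor, and for part (2) observe that a $z_1$-free monomial can only arise from the constant-in-$z_1$ branch $z_{w,d}\mapsto z_w$ of each substitution. You are somewhat more careful than the paper in two spots — justifying the degree claim via the uncancellable term $\mathbf{z}_{\mathcal W}$, and flagging that when weights repeat the tuple-indexed sum in (2) must be read as a sum over distinct monomials of $M$ (a reading consistent with the paper's own phrasing ``each occurrence \ldots contributes to one occurrence'') — though your parenthetical ``equivalently, over multisets $\{d_1,\dots,d_k\}$'' is not quite the right reformulation; it should be multisets of marks.
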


\begin{proof}
We know that every monomial in $M_k(\mathbf{z})$ is of the form $z_{w_1,d_1}\cdots z_{w_k,d_k}$ where $\sum_j w_j=N$ and $\sum_{j}d_j=n-k$. In addition, each monomial in $D_{\bullet_{w_j,d_j}}$ is of the form $z_{w_j-i_j}z_1^{i_j}$ where $w_j-i_j\geq 2$ and  $0\leq i_j\leq d_j$ since every mark is strict. By combining these we get that each monomial in $D_k(\mathbf{z})$ is of the form 

\[z_1^{i_1+\ldots + i_k}z_{\lambda_1}\cdots z_{\lambda_k}\]
where $\lambda_j= w_j - i_j\geq2$. This shows $(a)$.

To prove $(b)$, by a similar argument as for $(a)$,  a monomial of the form $z_{w_1}\cdots z_{w_k}$ where $\sum_i w_i=N$ and $w_j\geq 2$ can only arise after undotting a term of the form $z_{w_1,d_1}\cdots z_{w_k,d_k}$ in $M(\mathbf{z})$ for some tuple $(d_1,\ldots,d_k)$ of non-negative integers satisfying $\sum_i d_i=n-k$. Since $D_{\bullet_{w,d}}=z_{w}+\text{other terms divisible by $z_1$}$, it follows that each occurrence of $z_{w_1,d_1}\cdots z_{w_k,d_k}$ in $M(\mathbf{z})$ contributes to one occurrence of $z_{w_1}\cdots z_{w_k}$. The conclusion now follows.
\end{proof}
This lemma can be seen as a first step in recovering $M$ from $D$. The idea is to recover all the weights from the terms of $D$ that are not divisible by $z_1$ and then use the terms divisible by $z_1$ to reconstruct the number of dots corresponding to each weight. However, in general, it may not be possible to reconstruct a polynomial $f(\mathbf{z})$ from $f(z_{w,d}=D_{\bullet_{w,d}})$ as the following example shows.
\begin{example}
\label{ex:strict}
Let $f(\mathbf{z}) =  z_{4,1}z_{5,2}+z_{4,2}z_{5,2}$ and $g(\mathbf{z}) = z_{4,1}z_{5,3}+z_{4,2}z_{5,1}$.  A direct computation shows
\begin{align*}
f(z_{w,d}=D_{\bullet_{w,d}})&= 2z_5z_4-3z_5z_3z_1+z_5z_2z_1^2-4z_4^2z_1+8z_4z_3z_1^2-2z_4z_2z_1^3-3z_3^2z_1^3+z_3z_2z_1^4\\
&=g(z_{w,d}=D_{\bullet_{w,d}}).
\end{align*}
Thus, we cannot reconstruct $f(\mathbf{z})$ (or $g(\mathbf{z})$) from $f(z_{w,d}=D_{\bullet_{w,d}})$. The geometry of polynomials that lead to the same polynomial after applying the undotting substitution will be studied in a forthcoming paper \cite{aliste2022markedII}.
\end{example}

\begin{prop}
\label{D:degreeone}
Let $(T,\omega)$ be a strictly weighted tree. Then, the degree-one term and the largest-degree term in $M(\mathbf{z})$ can be recovered from $D(\mathbf{z})$.
\end{prop}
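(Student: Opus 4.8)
The plan is to reconstruct each of the two extreme homogeneous components of $M(\mathbf{z})$ from the corresponding extreme components of $D(\mathbf{z})$, using Lemma~\ref{lem:MkDk}. By Proposition~\ref{prop:smr} (Equation~\eqref{Mformula1}), the degree-one term of $M(\mathbf{z})$ is the single monomial $z_{N,n-1}$ corresponding to $A=E(T)$, while the largest-degree term is $\mathbf{z}_{\mathcal{W}}$ corresponding to $A=\emptyset$, where $N$ is the total weight, $n$ the number of vertices, and $\mathcal{W}$ the multiset of vertex weights. So the task is to show that the data $(N,n)$ and $\mathcal{W}$ are each visible in $D(\mathbf{z})$.

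First I would handle the largest-degree term. By Lemma~\ref{lem:MkDk}(a), the polynomial $D(\mathbf{z})$ has degree $n$, and its degree-$n$ component $D_n(\mathbf{z})$ comes only from undotting $M_n(\mathbf{z})=\mathbf{z}_{\mathcal{W}}$ (the unique term of $M$ of top degree, since $T$ is a tree so $\sum_j d_j = n-k$ forces all $d_j=0$ when $k=n$). Since every vertex weight $w_j\geq 2$ (the tree is strictly weighted), we have $D_{\bullet_{w_j,0}}=z_{w_j,0}$, so $D_n(\mathbf{z}) = \mathbf{z}_{\mathcal{W}}$ itself — the substitution does nothing to this term. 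Thus the top-degree term of $D(\mathbf{z})$ is literally $\mathbf{z}_{\mathcal{W}}$, recovering the largest-degree term of $M(\mathbf{z})$; and reading off $n$ as this degree and $N=\sum_{w\in\mathcal{W}}w$ as the sum of the weights recovers $(N,n)$, hence the degree-one term $z_{N,n-1}$ of $M(\mathbf{z})$.

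For the degree-one term there is actually nothing further to do once $(N,n)$ is known, but to be self-contained I would also note directly that $D_1(\mathbf{z})=M_1(z_{w,d}=D_{\bullet_{w,d}})$ is obtained by undotting $z_{N,n-1}$, giving $D_{\bullet_{N,n-1}}=\sum_{i=0}^{n-1}(-1)^i\binom{n-1}{i}z_{N-i,0}z_{1,0}^i$; the unique monomial of this that is not divisible by $z_{1,0}$ is $z_{N,0}$, and since the tree is strictly weighted $N\geq 2$, so $D_1(\mathbf{z})$ alone determines $N$ (as the index of that monomial) and then $n-1$ (e.g. as the exponent in the top power of $z_{1,0}$, or via the binomial pattern), whence $z_{N,n-1}$ is recovered. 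This is essentially a routine unwinding of Equation~\eqref{def:bullet}.

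I do not expect a genuine obstacle here: the statement is really a corollary of Lemma~\ref{lem:MkDk} together with the observation that strict marks of the form $(w,0)$ with $w\geq 2$ are fixed by the undotting substitution. The one point requiring a little care is making sure the top-degree component of $D$ receives no contribution from lower-degree components of $M$ after substitution — but since $D_{\bullet_{w,d}}$ is homogeneous of degree one in the $z$-variables for every $(w,d)$, the substitution $z_{w,d}=D_{\bullet_{w,d}}$ preserves degree, so $D_k(\mathbf{z})$ depends only on $M_k(\mathbf{z})$ as claimed in the definition preceding Lemma~\ref{lem:MkDk}. With that noted, the proof is complete.
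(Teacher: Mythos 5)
Your overall plan (read the top-degree data of $M$ off the top of $D$, and the degree-one term off the unique degree-one monomial $z_N$ of $D$ together with the already-recovered $n$) is the same as the paper's, and the degree-one half is fine. But the justification of the top-degree half rests on a false premise. You assert that $D_{\bullet_{w,d}}$ is homogeneous of degree one in the $z$-variables, so that the undotting substitution preserves degree and the top-degree component of $D$ receives no contribution from lower-degree components of $M$. Looking at Equation~\eqref{def:bullet}, $D_{\bullet_{w,d}}=\sum_{i=0}^d(-1)^i\binom{d}{i}z_{w-i,0}z_{1,0}^i$ has monomials of every degree from $1$ to $d+1$, so undotting does \emph{not} preserve degree, and the notation $D_k:=M_k(z_{w,d}=D_{\bullet_{w,d}})$ does not mean the degree-$k$ homogeneous component of $D$. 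Concretely, for the two-vertex tree with weights $2$ and $3$ one has $M=z_{2,0}z_{3,0}+z_{5,1}$ and $D=z_2z_3+z_5-z_1z_4$, whose degree-two component is $z_2z_3-z_1z_4$, not $\mathbf{z}_{\mathcal{W}}=z_2z_3$. So your claim that ``the top-degree term of $D(\mathbf{z})$ is literally $\mathbf{z}_{\mathcal{W}}$'' fails, and with it the recovery of $n$ and of the largest-degree term as you describe it.

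The repair is exactly the device the paper uses: restrict attention to the monomials of $D$ \emph{not divisible by $z_1$}. By Lemma~\ref{lem:MkDk}(a) every monomial of $D_k$ has the form $z_1^i z_{\lambda_1}\cdots z_{\lambda_k}$ with all $\lambda_j\geq 2$ (this is where strictness enters), so a monomial is free of $z_1$ precisely when $i=0$, in which case its degree equals the degree $k$ of the term of $M$ it came from. Hence the largest-degree monomial of $D$ not divisible by $z_1$ has degree $n$ and can only arise from $M_n=\mathbf{z}_{\mathcal{W}}$, which recovers $\mathcal{W}$ and $n$; combined with Lemma~\ref{lem:MkDk}(b) this also identifies the coefficient. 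Your degree-one argument then goes through as in the paper. So the gap is a single wrong step rather than a wrong strategy, but as written the proof of the top-degree recovery does not stand.
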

\begin{proof}
By Lemma \ref{lem:MkDk} $(a)$, the largest-degree term in $D(\mathbf{z})$ that is not divisible by $z_1$ has the form $z_{w_1}\cdots z_{w_n}$ where $w_i\geq 2$ for all $i\in\{1,\ldots,n\}$ and $n$ is the order of $T$. By Lemma \ref{lem:MkDk} $(b)$, it is easy to see that 
\[ [z_{w_1}\cdots z_{w_n}]D(\mathbf{z}) = [z_{w_1,0}\cdots z_{w_n,0}]M(\mathbf{z}),\]
which recovers the largest-degree term of $M(\mathbf{z})$ and the order of $T$.

The term of degree one in $M(\mathbf{z})$ is $z_{N,n-1}$, where  $N$ is the total weight of $(T,\omega)$ and $n$ is the order of $T$. By Lemma \ref{lem:MkDk} $(a)$ there is a unique degree-one term in $D(\mathbf{z})$ which must be of the form $z_N$ and thus, using that we already computed the value of $n$ when dealing with the largest-degree term, we recover the degree-one term of $M(\mathbf{z})$.
\end{proof}
For terms of degree two we will use the following notation: Given  positive integers $N,d$, a \defn{$(N,d)-$ strict pair} is a pair of marks $(w_1,k_1), (w_2,k_2)\in\MM^{\circ}$ such that $w_1+w_2=N$, $k_1+k_2+1=d$ and $w_1 > w_2$ or $w_1=w_2$ and $k_1\leq k_2$. If $(N,d)$ is clear from the context, we refer to $(N,d)$-strict pairs just as strict pairs. 
Given a multiset $\mathcal{P}$ of strict pairs, define 
\[ L_\mathcal{P}(\mathbf{z}):=\sum_{((w_1,k_1),(w_2,k_2))\in \mathcal{P}}z_{w_1,k_1}z_{w_2,k_2}.\]
As observed in Example~\ref{ex:strict}, it is not clear whether  we can recover $L_\mathcal{P}(\mathbf{z})$, i.e., $\mathcal{P}$, from  $L_\mathcal{P}(z_{w,d}=D_{\bullet_{w,d}})$.
However the strictness of the pairs in $\mathcal{P}$ will allow us to do the reconstruction as the following lemma shows. 
\begin{lemma}\label{l:starleaf}
Let $N,d$ be integers and $\mathcal{P}$ be a multiset of strict pairs. Then $\mathcal{P}$ can  be reconstructed from the polynomial $L_\mathcal{P}(z_{w,k}=D_{\bullet_{w,k}})$.
\end{lemma}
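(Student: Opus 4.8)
<br>

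The plan is to induct on the number of strict pairs in $\mathcal{P}$, peeling off one well-chosen pair at a time. The key point is that among all the monomials $z_{w_1,k_1}z_{w_2,k_2}$ with $((w_1,k_1),(w_2,k_2))\in\mathcal{P}$, after the undotting substitution each one contributes a sum of terms of the form $(-1)^{i_1+i_2}\binom{k_1}{i_1}\binom{k_2}{i_2}z_{w_1-i_1}z_{w_2-i_2}z_1^{i_1+i_2}$, and because the pairs are strict (so $w_j\geq k_j+2$) every exponent $w_j-i_j$ is at least $2$. Therefore the ``leading'' piece of each contribution, obtained by taking $i_1=i_2=0$, is the monomial $z_{w_1}z_{w_2}$ with neither factor equal to $z_1$.

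First I would identify, among all monomials of $L_\mathcal{P}(z_{w,k}=D_{\bullet_{w,k}})$ that are \emph{not divisible by $z_1$}, one of maximal total weight split as extremally as possible — concretely, order such monomials $z_a z_b$ with $a\geq b$ lexicographically by $(a,b)$ and take the largest. I claim this monomial $z_a z_b$ can only arise as the $i_1=i_2=0$ term of pairs $((a,k_1),(b,k_2))\in\mathcal{P}$: any pair $((w_1,k_1),(w_2,k_2))$ producing $z_a z_b$ must have $w_1-i_1, w_2-i_2$ equal to $a,b$ in some order with $i_1,i_2\geq 0$, hence $\{w_1,w_2\}$ dominates $\{a,b\}$ in the appropriate sense; by maximality of $(a,b)$ and the constraint $w_1+w_2=N$ this forces $i_1=i_2=0$ and $\{w_1,w_2\}=\{a,b\}$. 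So the coefficient $[z_a z_b]\,L_\mathcal{P}(z_{w,k}=D_{\bullet_{w,k}})$ equals the number of pairs in $\mathcal{P}$ whose weight-parts are $\{a,b\}$ — and since $w_1+w_2=N$, $k_1+k_2=d-1$ are fixed, and strictness plus the normalization $w_1>w_2$ or ($w_1=w_2$, $k_1\le k_2$) pins down $(k_1,k_2)$ uniquely once $\{w_1,w_2\}=\{a,b\}$ is known (when $a>b$ there is a unique valid pair $((a,k_1),(b,k_2))$ for each value of $k_1$, but the \emph{multiset} of such pairs with all possible $k_1$ is what we must disentangle).

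This is the main obstacle: when $a>b$ there can be several strict pairs with weight-parts $\{a,b\}$, differing in how the $d-1$ dots are distributed, and they all share the same leading monomial $z_a z_b$. To separate them I would look at the full ``$z_a z_b$-sector'': collect every monomial of $L_\mathcal{P}(z_{w,k}=D_{\bullet_{w,k}})$ of the form $z_{a-i_1} z_{b-i_2} z_1^{i_1+i_2}$, i.e.\ the image under undotting of $z_{a,k_1}z_{b,k_2}$ for the pairs with weight-parts $\{a,b\}$. By the argument above, no pair with a \emph{different} weight-split contributes any monomial to this sector (a pair with weight-parts $\{a',b'\}$ strictly dominating $\{a,b\}$ would, via $i_1=i_2=0$, produce $z_{a'}z_{b'}\ne z_a z_b$ of larger lex rank, contradiction; and one with smaller weight-parts can never reach exponents $a,b$). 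Hence this sector is exactly $L_{\mathcal{P}'}(z_{w,k}=D_{\bullet_{w,k}})$ where $\mathcal{P}'\subseteq\mathcal{P}$ is the sub-multiset of pairs with weight-parts $\{a,b\}$, and by Lemma~\ref{lemma:Pascal}-type bookkeeping — or more directly, by noting that $D_{\bullet_{a,k}}$ for $k=0,1,\dots,a-2$ are linearly independent (triangular in the $z_1$-degree and monic on $z_a$) — the polynomial $\sum_{((a,k_1),(b,k_2))\in\mathcal{P}'} z_{a,k_1}z_{b,k_2}$ is recoverable from this sector, so $\mathcal{P}'$ itself is recovered. Then I would subtract $L_{\mathcal{P}'}(z_{w,k}=D_{\bullet_{w,k}})$ from $L_\mathcal{P}(z_{w,k}=D_{\bullet_{w,k}})$ and recurse on $\mathcal{P}\setminus\mathcal{P}'$, which has strictly fewer pairs.

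Finally I would handle the base case ($\mathcal{P}=\varnothing$, trivial) and record the linear-independence fact used above: for fixed $w$, the family $\{D_{\bullet_{w,k}}\mid 0\le k\le w-2\}$ is linearly independent over $\ZZ$, since $D_{\bullet_{w,k}}=\sum_{i=0}^k(-1)^i\binom{k}{i}z_{w-i}z_1^i$ has $z_1$-degree exactly $k$; more generally, in the two-weight sector the products $\{D_{\bullet_{a,k_1}}D_{\bullet_{b,k_2}}\}$ with $k_1+k_2=d-1$ are linearly independent because the bidegree in the ``$z$-part versus $z_1$'' filtration still separates them — when $a\ne b$ the factor $z_a^{1}z_b^{1}z_1^{k_1+k_2}$-type leading behaviour already distinguishes the split of dots, and when $a=b$ the normalization $k_1\le k_2$ removes the remaining symmetry. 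Assembling these observations gives the reconstruction of $\mathcal{P}$ from $L_\mathcal{P}(z_{w,k}=D_{\bullet_{w,k}})$, completing the proof.
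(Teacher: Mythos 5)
Your skeleton (anchor on an extremal weight-split, recover the pairs carrying it, subtract their contribution, recurse) is the same as the paper's, and your lex-largest $z_1$-free monomial $z_az_b$ is the same anchor as the paper's smallest $a_0$ with $[z_{N-a_0}z_{a_0}]\neq 0$ (take $b=a_0$, $a=N-a_0$). The genuine gap is the purity claim for your ``$z_az_b$-sector''. Your argument only shows that the single monomial $z_az_b$ receives no contribution from pairs with a different weight-split; it does not show this for the other monomials $z_{a-i_1}z_{b-i_2}z_1^{i_1+i_2}$ of the sector. A pair $((a',k_1'),(b',k_2'))$ with $a'<a$ and $b'>b$ contributes $z_{a'}z_{b}z_1^{b'-b}$ (by undotting its smaller factor from $b'$ down to $b$, possible whenever $k_2'\geq b'-b$), and since $a-a'=b'-b$ this monomial equals $z_{a-(a-a')}z_{b-0}z_1^{(a-a')}$, which lies squarely in your sector. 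Concretely, for $N=9$, $d=3$ and $\mathcal{P}=\{((6,2),(3,0)),\,((5,1),(4,1))\}$ one gets $(a,b)=(6,3)$, and $D_{\bullet_{5,1}}D_{\bullet_{4,1}}=(z_5-z_4z_1)(z_4-z_3z_1)$ dumps $-z_5z_3z_1+z_4z_3z_1^2$ into the sector; the resulting sector polynomial $z_6z_3-3z_5z_3z_1+2z_4z_3z_1^2$ is not $L_{\mathcal{P}'}(z_{w,k}=D_{\bullet_{w,k}})$ for any multiset $\mathcal{P}'$ of pairs with weight-split $\{6,3\}$ (the vanishing of the coefficient of $z_6z_2z_1$ forces the multiplicity of $((6,1),(3,1))$ to be $0$, and then the coefficient of $z_5z_3z_1$ would have to be even). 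So the recursion as you set it up cannot proceed.

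The repair is exactly what the paper does: restrict the sector to the monomials $z_az_{b-i}z_1^{i}$, i.e.\ never dot down the factor of maximal weight $a$. Because $b$ is the minimum, over all pairs of $\mathcal{P}$, of the smaller weight in the pair (here one uses your correct observation that the $z_1$-free coefficients cannot cancel), any pair producing $z_az_{b-i}z_1^{i}$ must have weights exactly $\{a,b\}$ with its $a$-factor undotted, so this restricted sector is genuinely pure. Its coefficients are $(-1)^i\sum_{\ell\geq i}m_\ell\binom{\ell}{i}$, where $m_\ell$ is the multiplicity of $((a,d-\ell-1),(b,\ell))$, a triangular system; the paper simply peels off the pair with the largest $i$ having a nonzero coefficient (again using that monomials with a fixed power of $z_1$ all carry the same sign) and inducts on the sum of absolute values of the coefficients. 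Your triangularity/linear-independence idea is right in spirit but is applied to the wrong, contaminated polynomial.
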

\begin{proof}
Let us begin by noting that since all pairs are strict, the same argument as in the proof of Proposition~\ref{prop:MDproper} implies that a monomial appears with positive coefficient in $L_\mathcal{P}(z_{w,k}=D_{\bullet_{w,k}})$ only when it contains an even power of $z_1$.

The proof is by induction on the sum of the absolute values of the coefficients of $L_\mathcal{P}(z_{w,k}=D_{\bullet_{w,k}})$, which we denote $S_{\mathcal{P}}$. If $S_{\mathcal{P}}=0$, then the only possibility is that $\mathcal{P}$ is empty. For the inductive step, suppose that $S_{\mathcal{P}}>0$ and that we can reconstruct every $\mathcal{P}'$ from $L_{\mathcal{P}'}(z_{w,k}=D_{\bullet_{w,k}})$ whenever $S_{\mathcal{P}'}<S_{\mathcal{P}}$. Define $Q(\mathbf{z}) := L_\mathcal{P}(z_{w,k}=D_{\bullet_{w,k}})$.

Choose $a_0$ to be the smallest positive integer such that $[z_{N-a_0}z_{a_0}]Q(\mathbf{z})\neq 0 $. Since all marks in the pairs in  $\mathcal{P}$ are strict, it must be the case that $a_0>1$ and we can deduce that $\mathcal{P}$ contains at least one pair of the form $((N-a_0,d-k-1),(a_0,k))$ for some $k<a_0-1$. When undotting the second variable in a term $z_{N-a_0, d-k-1}z_{a_0,k}$ in $L_{\mathcal{P}}(\mathbf{z})$, we obtain in $Q(\mathbf{z})$ terms of the form $z_{N-a_0}z_{a_0-i}z_1^i$ for $0\leq i \leq k$. Moreover, by the choice of $a_0$ all such terms in $Q(\mathbf{z})$ arise after undotting a term of the form $z_{N-a_0, d-\ell-1}z_{a_0,\ell}$ for some $\ell\geq i$. 
Now let $d_0$ be the largest non-negative integer such that $[z_{N-a_0}z_{a_0-d_0}z_1^{d_0}]Q(\mathbf{z})\neq 0$. 
The observations above imply that $\mathcal{P}$ has a strict pair of the form $((N-a_0,d-\ell-1),(a_0,\ell))$ for $\ell\geq d_0$, and by the maximality of $d_0$, it must be $\ell=d_0$. 
Thus, we have proved that $p=((N-a_0,d-d_0-1),(a_0,d_0))$ is a strict pair that belongs to $\mathcal{P}$.

Now let 
\[Q'(\mathbf{z}):=Q(\mathbf{z})-D_{\bullet_{N-a_0,d-d_0-1}}D_{\bullet_{a_0,d_0}}\]
Clearly $Q'(\mathbf{z}) = L_{\mathcal{P}\setminus\{p\}}(z_{w,k}=D_{\bullet_{w,k}})$, and by the observation at the beginning of the proof, $S_{\mathcal{P}\setminus\{p\}}<S_{\mathcal{P}}$. By the induction hypothesis, we can reconstruct $\mathcal{P}\setminus\{p\}$ from $Q'(\mathbf{z})$. But we already know $p$ so this finishes the reconstruction of $\mathcal{P}$.
\end{proof}

 Given a strictly weighted tree $(T,\omega)$ of total weight $N$ and $n$ vertices, there is a unique multiset $\mathcal{P}$ of $(N,n-1)$-strict pairs such that $L_\mathcal{P}(\mathbf{z})=M_2(\mathbf{z})$, since  every term in  $M_2(\mathbf{z})$ is of the form $z_{w_1,d_1}\,z_{w_2,d_2}$ where 
 the marks $(w_1,d_1)$ and $(w_2,d_2)$ are strict.
It follows that  $L_\mathcal{P}(z_{w,k}=D_{\bullet_{w,k}}) = D_2(\mathbf{z})$.

\begin{coro}\label{c:leaves}
Let $(T,\omega)$ be a  strictly weighted tree. 
Then, $M_2(\mathbf{z})$ can be reconstructed  from $D_2(\mathbf{z})$. In particular, from the $D$-polynomial, we can reconstruct the multiset of leaf weights, the number of edges,  and recognize whether $(T,\omega)$ is a star or a $2$-star. 
\end{coro}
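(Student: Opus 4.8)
The plan is to deduce Corollary~\ref{c:leaves} from Lemma~\ref{l:starleaf} together with the basic structural facts already recorded in Proposition~\ref{prop:leaves} and Proposition~\ref{prop:starsrecognizable}. First I would observe, as in the paragraph preceding the corollary, that for a strictly weighted tree $(T,\omega)$ of total weight $N$ and $n$ vertices, every degree-two term of $M(\mathbf{z})$ is of the form $z_{w_1,d_1}z_{w_2,d_2}$ with $(w_1,d_1),(w_2,d_2)\in\MM^\circ$, $w_1+w_2=N$ and $d_1+d_2+1=n-1$; after reordering each such pair so that $w_1>w_2$, or $w_1=w_2$ and $d_1\le d_2$, we obtain a multiset $\mathcal P$ of $(N,n-1)$-strict pairs with $L_{\mathcal P}(\mathbf{z})=M_2(\mathbf{z})$, and hence $L_{\mathcal P}(z_{w,k}=D_{\bullet_{w,k}})=D_2(\mathbf{z})$.

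Now the key point: applying Lemma~\ref{l:starleaf} to this $\mathcal P$ shows that $\mathcal P$, and therefore $M_2(\mathbf{z})$, can be reconstructed from $D_2(\mathbf{z})$. To invoke the lemma one needs to know $N$ and $n$; but these are read off from $D(\mathbf{z})$ by Proposition~\ref{D:degreeone} (the unique degree-one term of $D(\mathbf{z})$ is $z_N$, and $n$ is recovered from the largest-degree term not divisible by $z_1$). Strictly speaking, only $D_2(\mathbf{z})$ is assumed given, but since $D_2(\mathbf{z})$ is homogeneous of degree two in the $z_{i,0}$ after undotting, the value of $N$ is simply the common sum of subscripts in any of its monomials, and once $\mathcal P$ is recovered the length of a pair determines $n$; alternatively, one simply quotes Proposition~\ref{D:degreeone} since $D_2$ is a component of $D$. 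With $M_2(\mathbf{z})$ in hand, Proposition~\ref{prop:leaves} part (c) gives the multiset of leaf weights and the number of leaves (using, as in that proof, that the degree-two monomials with a factor $z_{i,0}$, i.e. with $d=0$ in one coordinate, are exactly those coming from deleting a pendant edge), and part (a) applied to $M_2$ via the relation $\sum_i\lambda_i=N$ together with $n$ gives the number of edges $n-1$; finally Proposition~\ref{prop:starsrecognizable} (which only uses $n$ and the number of leaves) lets us decide whether $(T,\omega)$ is a star or a $2$-star.

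I expect the only real content to be the reduction to Lemma~\ref{l:starleaf} — that is, checking that the multiset of degree-two marks of a strictly weighted tree genuinely consists of strict pairs in the sense of the definition, so that the lemma applies verbatim; everything after that is bookkeeping with Propositions~\ref{prop:leaves}, \ref{prop:starsrecognizable} and~\ref{D:degreeone}. The one subtlety to state carefully is that $M_2$ need not be directly visible in $D$ because the undotting substitution mixes degrees $\le n$ with the power $z_1^i$; but Lemma~\ref{lem:MkDk}(a) guarantees that the only contributions to the degree-two-in-$z$ part of $D$ that are not divisible by $z_1$, and more generally the whole homogeneous piece $D_2(\mathbf{z})=M_2(z_{w,d}=D_{\bullet_{w,d}})$, come precisely from $M_2$, which is why working with $D_2$ rather than with all of $D$ is legitimate and is exactly the hypothesis of Lemma~\ref{l:starleaf}.
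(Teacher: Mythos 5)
Your proposal is correct and follows essentially the same route as the paper: the paragraph preceding the corollary identifies $M_2(\mathbf{z})$ with $L_{\mathcal P}(\mathbf{z})$ for a unique multiset $\mathcal P$ of strict pairs, Lemma~\ref{l:starleaf} then recovers $\mathcal P$ (hence $M_2$) from $D_2$, and Propositions~\ref{prop:leaves} and~\ref{prop:starsrecognizable} together with Proposition~\ref{D:degreeone} yield the leaf weights, edge count, and star/$2$-star recognition. Your added remark that Lemma~\ref{lem:MkDk}(a) is what lets one isolate $D_2$ inside $D$ (via the monomials with exactly two factors of subscript at least $2$) is exactly the implicit step the paper relies on, though note that $D_2$ is not literally homogeneous of degree two—its monomials carry extra powers of $z_1$—which is precisely why that lemma is needed.
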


\begin{theorem}\label{thm:starsfromD}
Let $(T,\omega)$ be a strictly  weighted star. Then it can be reconstructed from its $D$-polynomial. 
\end{theorem}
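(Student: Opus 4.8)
The plan is to reduce to the $M$-polynomial case, Theorem~\ref{thm:starsMreconstr}, by recovering the combinatorial data of $(T,\omega)$ from $D(\mathbf{z})$ via the reconstruction results already established. Since a strictly weighted star is in particular a strictly weighted tree, Corollary~\ref{c:leaves} applies: from $D(\mathbf{z})$ we reconstruct $M_2(\mathbf{z})$, and hence the multiset $\mathcal{W}_L$ of leaf weights of $(T,\omega)$, the number of edges, and the fact that $(T,\omega)$ is a star.

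Next, by Proposition~\ref{D:degreeone}, the degree-one term $z_{N,n-1}$ of $M(\mathbf{z})$ can be recovered from $D(\mathbf{z})$, which yields the total weight $N$ of $(T,\omega)$ (and, together with the recoverable largest-degree term, its number of vertices $n$ and the multiset $\mathcal{W}$ of vertex weights). As observed in the proof of Theorem~\ref{thm:starsMreconstr}, a weighted star is determined up to $\omega$-isomorphism by the multiset of its leaf weights together with its total weight: when $T$ has at least three vertices the center weight is $N-\sum_{w\in\mathcal{W}_L}w$, while when $T$ has at most two vertices the multiset of all vertex weights is already pinned down by $\mathcal{W}_L$ and $N$. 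In all cases this reconstructs $(T,\omega)$ from $D(\mathbf{z})$.

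The substantive content lies in the inputs rather than in this assembly. The $D$-polynomial generally loses information relative to the $M$-polynomial (see Example~\ref{ex:strict}), but this loss is avoided for the degree-two part precisely because the pairs of marks occurring there are strict, which is what Lemma~\ref{l:starleaf} and Corollary~\ref{c:leaves} exploit. Thus the only thing to verify in the present proof is that the hypotheses of Corollary~\ref{c:leaves} and Proposition~\ref{D:degreeone} are met, which they are, so no genuine obstacle remains.
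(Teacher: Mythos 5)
Your proposal is correct and follows essentially the same route as the paper: recover the multiset of leaf weights via Corollary~\ref{c:leaves}, the total weight via Proposition~\ref{D:degreeone}, and observe that these two pieces of data determine a weighted star up to $\omega$-isomorphism. The extra care you take with the small cases ($T$ of order at most two) is a welcome but inessential refinement of the paper's argument.
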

\begin{proof}

A weighted star can be reconstructed from  its total weight and its multiset of leaf weights.  The total weight can be recovered by Proposition \ref{D:degreeone} while the multiset of leaf weights can be recovered by Corollary~\ref{c:leaves}. This gives the reconstruction of $(T,\omega)$. 
\end{proof}

\begin{example}
Consider the following  $D$-polynomial of an unknown tree $(T, \omega)$: 
\begin{align*}
D(\mathbf{z})=&-z_1^3z_9 + 3z_1^2z_{10} + 2z_1^2z_3z_7 + z_1^2z_4z_6 - 3z_1z_{11} - 4z_1z_3z_8 - 2z_1z_4z_7 \\&- 2z_1z_3z_4^2 - z_1z_3^2z_5+ z_{12}  + 2z_3z_9  + z_4z_8 + z_3^2z_6 + 2z_3z_4z_5 + z_2z_3^2z_4.
\end{align*}
First, if we sum the absolute value of all the coefficients we get $27=3^3$, which, by Corollary \ref{coro:3n1} implies that $(T,\omega)$ is a strictly weighted tree with $3$ edges. Next, from the term $z_{12}$ we recover the total weight $N=12$ and from the term $z_2z_3^2z_4$, which can be characterized as the term of largest degree that is not divisible by $z_1$, we recover the multiset of vertex weights $\{ 2,3,3,4 \}$.

Finally, we recover $M_2(\mathbf{z})$ from $D_2(\mathbf{z})$.  By Lemma \ref{lem:MkDk} $(a)$,
\begin{equation}
    \label{eq:D2}
 D_2(\mathbf{z}) = 2z_1^2z_3z_7 + z_1^2z_4z_6  - 4z_1z_3z_8 - 2z_1z_4z_7 + 2z_3z_9  + z_4z_8.
 \end{equation} 
We follow the proof of Lemma \ref{l:starleaf}. It is clear from Equation~\eqref{eq:D2} that $a_0=3$. Also, the term $z_3z_9$ is the only term of the form $z_{9}z_{3-i}z_1^{i}$, so $d_0= 0$ and it follows that $(9,3),(3,0)$ is a strict pair in $\mathcal{P}$.  After subtracting $z_{3}z_{9}$ and following the same argument we obtain that the multiplicity of $(9,3),(3,0)$ in $\mathcal{P}$ is $2$. Finally, after subtracting again $z_{3}z_{9}$ we get that $a_0=4$ and that $d_0=0$ since $z_{4}z_{8}$ is the only term of the form $z_{8}z_{4-i}z_1^i$. It follows that  $(8,3),(4,0)$ belongs to $\mathcal{P}$. This finishes the reconstruction of $\mathcal{P}$. We deduce that $T$ is a star and that the multiset of leaf-weights is $\{3,3,4\}$. Since $N=12$ it follows that the center has weight $2$.
\end{example}

\begin{lemma}
\label{lem:degn-1}
Let $(T,\omega)$ be a strictly weighted tree with $n$ vertices. Then the terms of degree $n-1$ of the $M$-polynomial can be reconstructed from the $D$-polynomial.
\end{lemma}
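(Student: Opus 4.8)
The plan is to determine the degree-$(n-1)$ homogeneous component $M_{n-1}(\mathbf z)$ of $M_{(T,\mathsf m_\omega)}(\mathbf z)$ explicitly, recognise its undotted image $D_{n-1}(\mathbf z)$ as an identifiable portion of $D(\mathbf z)$, and then invert the undotting substitution on that portion.

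First I would write down $M_{n-1}$. By the states model representation of Proposition~\ref{prop:smr}, and since $(T,\mathsf m_\omega)$ is a forest with all marks of the form $(w,0)$, the term indexed by $A\subseteq E(T)$ has degree $k(T|_A)=n-|A|$; hence the degree-$(n-1)$ terms are exactly those with $|A|=1$. For an edge $e=uv$ with $\omega(u)=a$ and $\omega(v)=b$ the corresponding term is $z_{a+b,1}\,\mathbf z_{\mathcal W\setminus\{a,b\}}$, where $\mathcal W$ is the multiset of vertex weights. Grouping the edges by the unordered (possibly equal) pair of their endpoint weights gives
\[
M_{n-1}(\mathbf z)=\sum_{\{a,b\}}\alpha(a,b)\,z_{a+b,1}\,\mathbf z_{\mathcal W\setminus\{a,b\}},
\]
where the sum is over pairs of weights occurring in $\mathcal W$ and $\alpha(a,b)$ is the number of edges of $T$ joining a vertex of weight $a$ to a vertex of weight $b$ (as in Lemma~\ref{l:mdeg-1}(a)), with $\alpha(a,b)=0$ for pairs not realised by an edge. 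Since $\mathcal W$ and $n$ are recoverable from the top-degree term $\mathbf z_{\mathcal W}$ and the degree-one term of $M$ by Proposition~\ref{D:degreeone}, it remains to recover the integers $\alpha(a,b)$.

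Next I would locate $D_{n-1}$ inside $D$. By Lemma~\ref{lem:MkDk}(a) every monomial of $D_k(\mathbf z)$ has the form $z_1^{\,i}z_{\lambda_1}\cdots z_{\lambda_k}$ with all $\lambda_j\ge 2$, hence has exactly $k$ factors different from $z_1$; consequently the components $D_0,D_1,\dots,D_n$ are supported on pairwise disjoint sets of monomials, and $D_{n-1}(\mathbf z)$ is precisely the sum of those monomials of $D(\mathbf z)$ having exactly $n-1$ factors different from $z_1$. This is the step where strict weightedness is indispensable: Lemma~\ref{lem:MkDk}(a) fails for a mark $(w,d)$ with $w=d+1$, since then $D_{\bullet_{w,d}}$ carries an extra power of $z_1$ and the grading by ``number of non-$z_1$ factors'' collapses. (Equivalently one may invoke Proposition~\ref{prop:MDproper}, which guarantees that the undotting substitution is cancellation-free on the strictly marked tree $(T,\mathsf m_\omega)$.) Then I would compute $D_{n-1}$ directly from the formula above: using $D_{\bullet_{w,0}}=z_{w,0}$ and $D_{\bullet_{a+b,1}}=z_{a+b,0}-z_{1,0}\,z_{a+b-1,0}$,
\[
D_{n-1}(\mathbf z)=\sum_{\{a,b\}}\alpha(a,b)\bigl(z_{a+b,0}-z_{1,0}\,z_{a+b-1,0}\bigr)\mathbf z_{\mathcal W\setminus\{a,b\}}.
\]
Since $(T,\omega)$ is strictly weighted, every factor of $z_{a+b,0}\,\mathbf z_{\mathcal W\setminus\{a,b\}}$ has second index $\ge 2$, so this monomial is not divisible by $z_{1,0}$ and receives no contribution from any of the $-z_{1,0}z_{a'+b'-1,0}\,\mathbf z_{\mathcal W\setminus\{a',b'\}}$ summands. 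Moreover, distinct pairs yield distinct such monomials: if $z_{a+b,0}\,\mathbf z_{\mathcal W\setminus\{a,b\}}=z_{a'+b',0}\,\mathbf z_{\mathcal W\setminus\{a',b'\}}$, cancelling the common multiset $\mathcal W$ from the multisets of second indices reduces this to $\{a+b,a',b'\}=\{a'+b',a,b\}$; if $a+b\ne a'+b'$, say $a+b>a'+b'$, then $a+b$ strictly exceeds each of $a'+b'$, $a$ and $b$ (all weights being positive), so $a+b$ is an element of the left multiset that cannot occur in the right one, a contradiction; hence $a+b=a'+b'$ and $\{a,b\}=\{a',b'\}$. Therefore $\alpha(a,b)=[\,z_{a+b,0}\,\mathbf z_{\mathcal W\setminus\{a,b\}}\,]\,D_{n-1}(\mathbf z)$ for every pair $\{a,b\}$ from $\mathcal W$, and inserting these values into the displayed formula for $M_{n-1}$ reconstructs it.

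The main obstacle is the middle step — recognising $D_{n-1}(\mathbf z)$ as a genuinely well-defined piece of $D(\mathbf z)$ — and it is exactly there that the strict-weight hypothesis is used; everything else is bookkeeping (the ``compare the largest entries'' argument for distinguishing pairs, and the observation that the $z_{1,0}$-free part of $D_{n-1}$ already encodes all the $\alpha(a,b)$). I would also double-check throughout that Proposition~\ref{D:degreeone}, Lemma~\ref{lem:MkDk} and, if invoked, Proposition~\ref{prop:MDproper} are being applied under their stated hypothesis that $(T,\omega)$ is strictly weighted, which holds here.
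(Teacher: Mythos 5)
Your proof is correct and follows essentially the same route as the paper's: both identify the $z_1$-free degree-$(n-1)$ monomials of $D(\mathbf{z})$ via Lemma~\ref{lem:MkDk} and Proposition~\ref{D:degreeone}, observe that for a tree each such monomial arises from contracting a single edge in the states model, and read off the edge counts $\alpha(a,b)$ to rebuild $M_{n-1}(\mathbf{z})$. You are in fact somewhat more explicit than the paper about why distinct weight pairs $\{a,b\}$ yield distinct monomials $z_{a+b}\,\mathbf{z}_{\mathcal{W}\setminus\{a,b\}}$, a point the paper leaves implicit (it is essentially contained in the proof of Lemma~\ref{l:mdeg-1}).
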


\begin{proof}
Let $N$ be the total weight of $(T,\omega)$, which is known by Proposition~\ref{D:degreeone}. 
Let $\{ \tau_1, \tau_2 , \ldots, \tau_l \}$ be the set of
 partitions of $N$ of length $n-1$ with all parts larger than one and such that
 $a_i:=[\mathbf{z}_{\tau_i}]D(\zz) > 0$ for all $i\in\{1,\ldots,l\}$. Using  Proposition~\ref{D:degreeone} we can also compute the multiset $\mathcal{W}$ of vertex weights of $(T, \omega)$.  

Since the terms of degree $n-1$ without the variable $z_1$ appear after undotting a term of degree $n-1$ of $M$ 
and  in the states model representation the terms of degree $n-1$ of $M$ are induced by an edge, we have that for each $i \in \{1,2,\ldots, k\}$ there exist at least one edge $e = uv$ of $(T,\omega)$ such that 
\[ \mathbf{z}_{\tau_i}z_{\omega(u)}z_{\omega(v)} = {\mathbf{z}_\mathcal{W}}z_{\omega(u)+\omega(v)}.\]

This implies that   the term 
$z_{\omega(u)+\omega(v), 1}\mathbf{z}_{\mathcal{W} \setminus \{\omega(u),\omega(v)\}}$ 
appears in $M(\mathbf{z})$. Moreover, we  deduce that $[\mathbf{z}_{\tau_i}]D(\zz)$ is equal to the number of edges
joining a vertex of  weight  $\omega(u)$ with a vertex of weight $\omega(v)$. Hence, by  Lemma~\ref{l:mdeg-1} we get that

$$[z_{\omega(u)+\omega(v), 1}\mathbf{z}_{\mathcal{W} \setminus \{\omega(u),\omega(v)\}}]M(\zz) = [\mathbf{z}_{\tau_i}]D(\zz),$$
holds  for all $\tau_i$, which conclude the proof.

\end{proof}

\begin{theorem}\label{thm:2starsfromD}
Let $(T,\omega)$ be a strictly weighted $2$-star. 
Then $(T,\omega)$ can be reconstructed from the $D$-polynomial.
\end{theorem}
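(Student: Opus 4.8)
The plan is to parallel the proof of Theorem~\ref{t:m2star} but work directly with the $D$-polynomial, using the reconstruction tools we have just developed for $D$ (Proposition~\ref{D:degreeone}, Corollary~\ref{c:leaves}, Lemma~\ref{lem:degn-1}, and Lemma~\ref{l:starleaf}). First I would record what is already available: by Proposition~\ref{D:degreeone} we recover the total weight $N$ and the order $n$ of $T$; by Corollary~\ref{c:leaves} we reconstruct $M_2(\mathbf{z})$, hence the multiset $\mathcal{W}$ of vertex weights, the multiset $\mathcal{W}_L$ of leaf weights, and we know that $(T,\omega)$ is a $2$-star. Comparing $\mathcal{W}$ and $\mathcal{W}_L$ yields the (multiset of) center weights $\{u_0,u_1\}$, exactly as in the first paragraphs of the proof of Theorem~\ref{t:m2star}.

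The crux is that the proof of Theorem~\ref{t:m2star} relied on the coefficients $\alpha(w_1,w_2)$ of the degree-$(n-1)$ part of $M$ and on the coefficients $\beta(w_0,w_1,w_2)$ of the degree-$(n-2)$ part of $M$. By Lemma~\ref{lem:degn-1}, all degree-$(n-1)$ terms of $M$ can be recovered from $D$, so every quantity $\alpha(w_i,w_j)$ is available. This immediately settles the case $u_0 \ne u_1$: the argument in Theorem~\ref{t:m2star} showing $L_{i,j} = \alpha(u_i,w_j)$ (with the small corrections when a leaf shares a center's weight, solved there via the linear relations involving the multiplicities $\mu_j$) goes through verbatim, since $N$, $\mathcal{W}$, $\mathcal{W}_L$ and all the $\alpha$'s are now known from $D$. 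So the $2$-star is reconstructed in this case.

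The main obstacle is the case $u_0 = u_1$, where Theorem~\ref{t:m2star} used the $\beta(u_0,w_{j_1},w_{j_2})$, i.e.\ degree-$(n-2)$ coefficients of $M$; Lemma~\ref{lem:degn-1} only gives us degree-$(n-1)$ terms. Here I would prove a $D$-analogue of Lemma~\ref{lem:degn-1} for degree $n-2$. The key point is that in a $2$-star the degree-$(n-2)$ terms of $M$ arise either from contracting two independent pendant edges (giving $z_{a+b,1}z_{c+d,1}\mathbf z_{\mathcal W\setminus\{a,b,c,d\}}$, with both marks strict of dot-number $1$) or from contracting two adjacent edges through the center (giving $z_{a+b+d,2}\mathbf z_{\mathcal W\setminus\{a,b,d\}}$). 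The degree-$(n-2)$ part of $D$ that is \emph{not} divisible by $z_1$ recovers, by the same argument as in Lemma~\ref{lem:MkDk}(b), the sum $\sum_{\mathbf d}[z_{w_1,d_1}z_{w_2,d_2}\cdots z_{w_{n-2},d_{n-2}}]M(\mathbf z)$ over all dot-vectors with $\sum d_i = 2$, i.e.\ $\alpha$-type contributions get tangled with the adjacent-edge contributions. To disentangle them, note that the adjacent-edge monomials have a factor $z_{a+b+d,2}$ with a \emph{single} vertex of large weight arising from three original vertices, whereas the independent-pair monomials have \emph{two} such factors; on the support level $\mathbf z_{\mathcal W\setminus\{a,b,c,d\}}$ versus $\mathbf z_{\mathcal W\setminus\{a,b,d\}}$ these are distinguishable once $\mathcal W$ is known, because the independent case removes four (counted with multiplicity) weights and the adjacent case removes three. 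More robustly, I would instead recover $M_2(\mathbf z)$ entirely (already done in Corollary~\ref{c:leaves}, via Lemma~\ref{l:starleaf} applied to strict pairs), and then observe that for a $2$-star there are only two internal-edge-free contractions contributing to a given degree-$(n-2)$ monomial type, so the coefficient of $z_{u_0+w_{j_1},1}z_{u_0+w_{j_2},1}\mathbf z_{\mathcal W\setminus\{u_0,u_0,w_{j_1},w_{j_2}\}}$ in $M$ equals $\beta(u_0,w_{j_1},w_{j_2})$ and can be read off the $z_1$-free, strict-pair-squared part of the degree-$(n-2)$ component of $D$ by an argument entirely analogous to Lemma~\ref{l:starleaf} (undot only one of the two strict factors, peel off contributions by minimality/maximality of exponents of $z_1$). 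Once the $\beta(u_0,w_{j_1},w_{j_2})$ are in hand, the discriminant analysis $\Delta_j = \mu_j^2 - 4\beta(u_0,w_j,w_j)$, the symmetric solution of the system $L_{0,j}+L_{1,j}=\mu_j$, $L_{0,j}L_{1,j}=\beta(u_0,w_j,w_j)$, the choice of one solution for the first $j_1$ with $\Delta_{j_1}\ne 0$, and the subsequent linear solves for the remaining $j_2$ — all copied from Theorem~\ref{t:m2star} — reconstruct $(T,\omega)$ up to $\omega$-isomorphism, finishing the proof. The step I expect to be genuinely delicate is the clean extraction of the $\beta$'s from the degree-$(n-2)$ part of $D$; this is where the strictness of all marks (no $(1,0)$, so no uncontrolled cancellation among equal powers of $z_1$) must be invoked exactly as in Lemma~\ref{l:starleaf}.
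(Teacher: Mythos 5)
Your overall strategy matches the paper's: recover $N$, $n$, $\mathcal{W}$, $\mathcal{W}_L$ and the center weights from $D$, settle the case $u_0\neq u_1$ by pulling the coefficients $\alpha(u_i,w_j)$ out of the degree-$(n-1)$ part of $M$ via Lemma~\ref{lem:degn-1}, and then handle $u_0=u_1$ by somehow extracting the $\beta(u_0,w_{j_1},w_{j_2})$ from $D$. The first two stages are fine. The gap is in the last stage, which is exactly where you admit the argument is ``delicate.'' Your first proposed mechanism --- that independent-pair contributions and adjacent-pair contributions to the degree-$(n-2)$ part are ``distinguishable on the support level because the independent case removes four weights and the adjacent case removes three'' --- is false. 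Both produce monomials of degree $n-2$, and when $w_{j_1}=u_0+w_{j_2}$ (or symmetrically) the full monomial $z_{u_0+w_{j_1}}z_{u_0+w_{j_2}}\,\mathbf{z}_{\mathcal{W}\setminus\{u_0,u_0,w_{j_1},w_{j_2}\}}$ coming from an independent pair coincides, as a monomial, with $z_{u_0+u_0+w_{j_2}}\,\mathbf{z}_{\mathcal{W}\setminus\{u_0,u_0,w_{j_2}\}}$ coming from two pendant edges sharing a center. Since $D$ only records the total coefficient of each monomial, no inspection of supports can separate the two contributions. Your fallback (a peeling argument ``entirely analogous to Lemma~\ref{l:starleaf}'' on the degree-$(n-2)$ component) is not worked out, and it is not clear it can resolve this same collision, since the undotted expansions of $z_{a+b,1}z_{c+d,1}$ and of $z_{a+b+d,2}$ overlap in every power of $z_1$ from $0$ to $2$.

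The paper's resolution is different and more concrete: it does not attempt to reconstruct the degree-$(n-2)$ part of $M$ at all. Instead it uses that for a strictly weighted tree the $z_1$-free part of $D$ is the $W$-polynomial (Corollary~\ref{WfromD}), so $[\mathbf{z}_{\mathcal{W}(j_1,j_2)}]D(\mathbf{z})$ literally counts the two-edge subsets $A$ with $\lambda(T,\omega,A)$ equal to that partition. This count equals $\beta(u_0,w_{j_1},w_{j_2})$ plus an explicit correction term $K_{j_1,j_2}$ counting the adjacent pairs that produce the same partition (Equation~\eqref{claim}); $K_{j_1,j_2}$ is nonzero precisely in the degenerate situation $w_{j_1}=u_0+w_{j_2}$ or $w_{j_2}=u_0+w_{j_1}$, and computing it requires a further case analysis depending on whether $u_0$ itself occurs as a leaf weight (Equation~\eqref{eq:finalb}). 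This correction-term computation is the real content of the theorem beyond Theorem~\ref{t:m2star}, and it is missing from your proposal; without it the system you set up for the $L_{i,j}$ uses wrong values of $\beta$ whenever the weight coincidences above occur.
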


\begin{proof}
We use the same notation as in the proof of Theorem~\ref{t:m2star}. First, Proposition~\ref{D:degreeone}, combined with Proposition~\ref{prop:leaves} ensure that we can recover the total weight $N$, the number of vertices  $n$ and  the multiset of vertex weights $\mathcal{W}$ from $D(\mathbf{z})$.
By Corollary~\ref{c:leaves} we can also recover the multiset of leaf  weights $\mathcal{W}_L$ from $D(\mathbf{z})$. 
From this we can deduce the weights $u_0$ and $u_1$ of the centers, the distinct leaf weights $w_1,\ldots,w_k$ and the multiplicities $\mu_j$ of each $w_j$ in $\mathcal{W}_L$ for each $j\in\{1,\ldots,k\}$.

Suppose first that  $u_0\neq u_1$. By the proof of Theorem~\ref{t:m2star}, it suffices to recover the coefficients 
\[\alpha({u_i,w_j})=[z_{u_i+w_j,1}\,\mathbf{z}_{\mathcal{W}\setminus\{u_i,w_j\}}]M(\mathbf{z})\]
from $D(\mathbf{z})$, but since the terms involved have degree $n-1$ in $M(\mathbf{z})$, this can be done by Lemma~\ref{lem:degn-1}.  

Suppose now that $u_0=u_1$. We consider the same coefficients 
\[\beta(u_0,w_{j_1}, w_{j_2})=[z_{u_0+w_{j_1},1}\,z_{u_0+w_{j_2},1}\,\mathbf{z}_{\mathcal{W}\setminus\{u_0,u_0,w_{j_1},w_{j_2}\}}]M(\mathbf{z})\] 
as in the proof of Theorem~\ref{t:m2star}. Define 
\[\mathcal{W}(j_1,j_2)=\mathcal{W}\setminus\{u_0,u_0,w_{j_1},w_{j_2}\}\cup\{u_0+w_{j_1},u_0+w_{j_2}\}.\]
We claim that 
\begin{equation}\label{claim}
[\mathbf{z}_{\mathcal{W}(j_1,j_2)}]D(\mathbf{z}) = \beta(u_0,w_{j_1}, w_{j_2}) + K_{j_1,j_2},
\end{equation} 
where $K_{j_1,j_2}$ is defined as follows: $K_{j_1,j_2}=0$ if $w_{j_1}\neq u_{0}+w_{j_2}$ and $w_{j_2}\neq u_{0}+w_{j_1}$; otherwise,   $K_{j_1,j_2}$ counts the number of edge sets $\{e_1,e_2\}$ such that $e_1$ and $e_2$ share a common vertex and the weights of the other vertices are either $\{u_0,w_{j_2}\}$ (when $w_{j_1}=u_0+w_{j_2}$) or $\{u_0,w_{j_1}\}$ (when  $w_{j_2}=u_0+w_{j_1}$).

Indeed, let $\tau$ be the integer partition obtained after reordering the weights in $\mathcal{W}(j_1,j_2)$ so that $\mathbf{z}_\tau = \mathbf{z}_{\mathcal{W}(j_1,j_2)}$. Note that every part of $\tau$ is  different from $1$ since $(T,\omega)$ is strictly weighted. Thus, by Corollary~\ref{WfromD}, the coefficient $[\mathbf{z}_\tau]D(\mathbf{z})$ counts the number of edge sets $A=\{e_1,e_2\}$ such that $\lambda(T,\omega, A)=\tau$. Let $A=\{e_1,e_2\}$ be any set of two distinct edges. The edges $e_1$ and $e_2$ can be either independent or share a common vertex: 
\begin{enumerate}
    \item  If $e_1$ and $e_2$ are independent, then they must be pendant edges incident to different centers; let $a_1$ and $a_2$ be the weights of the leaves incident with $e_1$ and $e_2$ respectively. Then $\lambda(T,\omega,A)=\{u_0+a_1,u_0+a_2\}\cup\mathcal{W}\setminus\{u_0,u_0,a_1,a_2\}$. It follows that a set of independent edges $e_1$ and $e_2$ contributes to the coefficient $[\mathbf{z}_{\tau}]D(\mathbf{z})$ if and only if $\{a_1,a_2\}=\{w_{j_1},w_{j_2}\}$. But from the proof of Theorem~\ref{t:m2star} and Lemma~\ref{l:mdeg-1} (b) we know that this case is counted by $\beta(u_0,w_{j_1},w_{j_2})$;
    \item If $e_1$ and $e_2$ share a common vertex, then they contribute to  the coefficient $[\mathbf{z}_{\tau}]D(\mathbf{z})$ if and only if $\lambda(T,\omega,A)=\tau$. Let $a_1$ and $a_2$ be the weights of the non-common vertices of $e_1$ and $e_2$. It is easy to check 
    that \[\lambda(T,\omega,A)=\{u_0+a_1+a_2\}\cup\mathcal{W}\setminus\{u_0,a_1,a_2\}.\] 
    It follows that if $w_{j_2}\leq w_{j_1}$ then $\lambda(T,\omega,A)=\tau$ if and only if $\{a_1,a_2\}=\{u_0,w_{j_2}\}$ and $w_{j_1}=u_0+w_{j_2}$.
    This proves the claim.
\end{enumerate}
Note that setting $j_1=j_2=j$, for $j\in\{1,\ldots,k\}$,  the value of $K_{j,j}$ is zero and  Equation \eqref{claim} implies that 
\[\Delta_{j}=\mu_j^2-4\beta(u_0,w_j,w_j)=\mu_j^2-4[\mathbf{z}_{\mathcal{W}(w_j,w_j)}]D(\mathbf{z}).\]
That is, we can deduce  the value of $\Delta_j$ for every $j\in\{1,\ldots,k\}$ from $D(\mathbf{z})$. Now we check whether $\Delta_j=0$ for all $j\in\{1,\ldots,k\}$. If it is the case, then the proof follows from the same arguments as the proof of Theorem~\ref{t:m2star}. Thus for the remainder of the proof we assume there exists $j_1\in\{1,\ldots,k\}$ such that $\Delta_{j_1}\neq 0$ and choose a solution $(L_{0,j_1},L_{1,j_1})$ for the leaf numbers of weight $w_{j_1}$. Note that by the proof of Theorem~\ref{t:m2star} we know that  for every $j_2\in\{1,\ldots,k\}\setminus\{j_1\}$,  the system 
\begin{equation}
    \label{eq:the_system}
L_{0,j_2}+L_{1,j_2} = \mu_{j_2}\quad\text{and}\quad L_{0,j_2}L_{1,j_1}+L_{0,j_1}L_{0,j_2}=\beta(u_0,w_{j_1},w_{j_2})
\end{equation}
has a unique solution $(L_{0,j_2},L_{1,j_2})$ since $L_{0,j_1}\neq L_{1,j_1}$. 
But from Equation \eqref{claim} we know how to recover  $\beta(u_0,w_{j_1},w_{j_2})$ 
for every $j_2$ such that $w_{j_2}\not\in\{w_{j_1}+u_0,w_{j_1}-u_0\}$. Thus if there is $j_1\in\{1,\ldots,k\}$ such that $\Delta_{j_1}\neq 0$ and $w_{j_1}+u_0$ and $w_{j_1}-u_0$ do not occur as leaf weights, then we 
have 
\[ L_{0,j_2}+L_{1,j_2} = \mu_{j_2}\quad\text{and}\quad L_{0,j_2}L_{1,j_1}+L_{0,j_1}L_{0,j_2}=[\mathbf{z}_{\mathcal{W}(w_{j_1},w_{j_2})}]D(\mathbf{z})\]
for all $j_2$. Since this system also has a unique solution, this would finish the proof.

It only remains us to compute $\beta(u_0,w_{j_1},w_{j_2})$ if there exists $j_2\in\{1,\ldots,k\}$ such that  $w_{j_2}=w_{j_1}+u_0$ or $w_{j_1}=w_{j_2}+u_0$. We explain the case $w_{j_1}=w_{j_2}+u_0$, the other case being analogous. By Equation \eqref{claim} it suffices to count the number $K_{j_1,j_2}$ of edge sets  $A=\{e_1,e_2\}$ where $e_1$ and $e_2$ share a common vertex (which is a center), and the weights of the other vertices are $u_0$ and $w_{j_2}$. Since $T$ is a $2$-star, there are two possibilities: 
\begin{enumerate}[(i)]
    \item either  $e_1$ is the central edge and $e_2$ is a pendant edge incident with a leaf of weight $w_{j_2}$;
    \item or $e_1$ is a pendant edge incident with a vertex of weight $u_{0}$ and $e_2$ is a pendant edge incident with a vertex of weight $w_{j_2}$.
\end{enumerate}
If $u_0$ does not occur as a leaf weight, then only case (i) is possible and thence $K_{j_1,j_2}=\mu_{j_2}$. Otherwise, let $j_3$ be the unique $j\in\{1,\ldots,k\}$ such that $u_0=w_{j}$. Then, case (ii) is also  possible, and it can be counted indirectly as the total number of (unordered) pairs of leaves of weight $u_0$ and $w_{j_2}$ minus the number of leaves of weight $u_0$ and $w_{j_2}$ that are adjacent to different centers. By Lemma~\ref{l:mdeg-1} (b), this last number equals 
$\beta(u_0, w_{j_2},u_0)$. 
It follows that 
\begin{equation*}
K_{j_1,j_2} = \begin{cases}
\mu_{j_3} + \binom{\mu_{j_3}}{2} - \beta(u_0,u_0,u_0),&\text{if $j_2=j_3$};
\\
\mu_{j_2} + \mu_{j_3}\mu_{j_2} - \beta (u_0, w_{j_2},u_0),& \text{otherwise}.
\end{cases}
\end{equation*}
Note that  $\gamma:=\mu_{j_3}+\binom{\mu_{j_3}}{2}-\beta(u_0,u_0,u_0)=\mu_{j_3}$ if $\mu_{j_3}<2$ and equals $\mu_{j_3}+\binom{\mu_{j_3}}{2}-[\mathbf{z}_{\mathcal{W}(u_0,u_0)}]D(\mathbf{z})$ when $\mu_{j_3}\geq 2$. 
On the other hand, when $j_3\neq j_2$, from Equation \eqref{claim} we get
\[\beta(u_0, w_{j_2},u_{0})= [\mathbf{z}_{\mathcal{W}(u_0,w_{j_2})}]D(\mathbf{z})-\delta_{w_{j_2}-2u_0}K_{j_2,j_3},
\]
where $\delta_{a}=1$ if $a=0$ and $0$ otherwise;  and $K_{j_2,j_3}$ is the number of edge sets $A = \{e_1,e_2\}$ where $e_1$ and $e_2$ share a common vertex and the weights of the non-common vertices are $\{u_0,u_0\}$. Following the same argument as the one for computing $K_{j_1,j_2}$ in the case $j_2=j_3$ we get that $K_{j_2,j_3}=\gamma$.
Hence, by combining these relations we recover
\begin{align}
\beta(u_0,w_{j_1},w_{j_2})=&[\mathbf{z}_{\mathcal{W}(w_{j_1},w_{j_2})}]D(\mathbf{z})-
\gamma\delta_{j_2-j_3} \nonumber \\&- (1-\delta_{j_2-j_3})\Big(\delta_{w_{j_2},2u_0}\gamma +
\mu_{j_2} + \mu_{j_3}\mu_{j_2} - [\mathbf{z}_{\mathcal{W}(u_0,w_{j_2})}]D(\mathbf{z})\Big). \label{eq:finalb}
\end{align}
Thus, by plugging this last relation into the system \eqref{eq:the_system}, we can solve for $L_{0,j_2}$ and $L_{1,j_2}$. Since these were the only remaining leaf numbers to recover, we have reconstructed $(T,\omega)$ and this finishes the proof.
\end{proof}

We now give an example to illustrate the ideas of Theorem \ref{thm:2starsfromD}.

\begin{example}
In this example we show how to reconstruct a strictly weighted $2$-star $(T,\omega)$ from $D(\mathbf{z})$. The complete $D$-polynomial and all the computations are done in Sagemath and they are available in \cite{alistecodeI}. The term of degree one and the unique term of largest degree that is not divisible by $z_1$  in $D(\mathbf{z})$ are
\[ z_{28} + z_{2}^3z_{3}^2z_4z_5z_6.\]
From this information, following Proposition \ref{D:degreeone} we  deduce that the total weight $N$ is $28$, that the multiset of vertex weights is $\{2,2,2,3,3,4,5,6\}$ that the order $n$ of $T$ is $8$ and thus $T$ has $7$ edges.
\begin{align*}
    D_2(\mathbf{z})=&z_{1}^6z_{8}z_{13} + z_{1}^6z_{6}z_{15} +
    z_{1}^6z_{5}z_{16} + z_{1}^6z_{4}z_{17} + 2z_{1}^6z_{3}z_{18} +
    z_{1}^6z_{2}z_{19} - 3z_{1}^5z_{9}z_{13} - 3z_{1}^5z_{8}z_{14} \\ 
    &- 6z_{1}^5z_{6}z_{16} - 6z_{1}^5z_{5}z_{17} - 6z_{1}^5z_{4}z_{18}
    - 12z_{1}^5z_{3}z_{19} - 6z_{1}^5z_{2}z_{20} + 3z_{1}^4z_{10}z_{13} +
    9z_{1}^4z_{9}z_{14} \\
    &+ 3z_{1}^4z_{8}z_{15} + 15z_{1}^4z_{6}z_{17} 
    +15z_{1}^4z_{5}z_{18} + 15z_{1}^4z_{4}z_{19} + 30z_{1}^4z_{3}z_{20} 
    + 15z_{1}^4z_{2}z_{21} - z_{1}^3z_{11}z_{13} \\
    &- 9z_{1}^3z_{10}z_{14}
    - 9z_{1}^3z_{9}z_{15} - z_{1}^3z_{8}z_{16} - 20z_{1}^3z_{6}z_{18}
    - 20z_{1}^3z_{5}z_{19} - 20z_{1}^3z_{4}z_{20}- 40z_{1}^3z_{3}z_{21}\\
    &- 20z_{1}^3z_{2}z_{22} + 3z_{1}^2z_{11}z_{14} + 9z_{1}^2z_{10}z_{15}
    + 3z_{1}^2z_{9}z_{16} + 15z_{1}^2z_{6}z_{19} + 15z_{1}^2z_{5}z_{20}+ 15z_{1}^2z_{4}z_{21}\\
    & + 30z_{1}^2z_{3}z_{22} + 15z_{1}^2z_{2}z_{23}
    - 3z_{1}z_{11}z_{15} - 3z_{1}z_{10}z_{16} - 6z_{1}z_{6}z_{20} -
    6z_{1}z_{5}z_{21} - 6z_{1}z_{4}z_{22}\\
    &- 12z_{1}z_{3}z_{23} -
    6z_{1}z_{2}z_{24} + z_{11}z_{16} + z_{6}z_{21} + z_{5}z_{22} +
    z_{4}z_{23} + 2z_{3}z_{24} + z_{2}z_{25}.
\end{align*}
Following the proof of Lemma \ref{l:starleaf} we recover the multiset of leaf-weights $\{2,3,3,4,5,6\}$. Thus, $w_1=2, w_2=3,w_3=4,w_4=5,w_5=6$ ,$\mu_1=1$, $\mu_2=2$, $\mu_3=1$ , $\mu_4=1$ and $\mu_5=1$. By comparing the multiset of leaf weights with the multiset of vertex weights we deduce that both centers have weight $2$. 

Since both centers have equal weight, we need to compute the $\Delta_i$'s. From Lemma \ref{l:mdeg-1} $(b)$, it is easy to see that $\Delta_i=1$ if $\mu_i=1$ because in this case $\beta(u_0,w_i,w_i)$ is necessarily $0$. It follows
that $\Delta_1=\Delta_3=\Delta_4=\Delta_5=1$. On the other hand, 
$\Delta_2 = 4 - \beta(2,3,3) = 4-4[z_2z_4z_5^2z_6^2]D(\mathbf{z})=0$ by Equation \eqref{claim} in the proof of the Theorem.

 We also have $j_3=1$ and 
$\gamma = \mu_1 = 1$. We also choose $j_1=5$,  so $w_{j_1}=6$ and we fix $L_{0,5} = 1$ and $L_{1,5} = 0$. 
To compute $L_{0,2}$ and $L_{1,2}$ we notice that $2 \notin \{ 8, 4\} $ then $\beta(2,6,2) = [z_{\mathcal{W}(2,6)}]D(\zz) = [z_3^2z_4^2z_5z_8]D(\zz) = 1$. From Equation~\eqref{eq:the_system} we deduce $L_{0,2} = 0$ and $L_{1,2} = 1$. In the same way we can compute $L_{0,5} = 1$ and $L_{1,5} = 0$.

The case of $L_{0,4}$ and $L_{1,4}$ is different because $w_3 = 4 = w_{j_1} - u_0 = 6-2$. In this case $j_2 = 3$ and $j_3=1$. From Equation~\eqref{eq:finalb} of the proof we get that:
\begin{eqnarray*}
 \beta(2,6,4) &=& [\zz_{\mathcal{W}(6,4)}]D(\zz) - (\gamma + \mu_3 + \mu_1\mu_3 - [\zz_{\mathcal{W}(2,4)}]D(\zz))\\
 &=& 3 -(1 +2+2 -1) = 1
\end{eqnarray*}
which implies $L_{0,4} = 0$ and $L_{1,4} = 1$.

Hence, we can reconstruct the 2-star which have a center of weight 2 with attached leaves of weight 6, 5 and 3; and another center of weight 2 and attached leaves of weights 2,3 and 4. 
\end{example}

From Theorems~\ref{thm:starsfromD} and~\ref{thm:2starsfromD} and the discussion at the beginning of this section we immediately deduce the following result on Stanley's tree isomorphism question. Recall that a tree is proper if every internal vertex is adjacent to at least one leaf.

\begin{coro}
Proper trees of diameter less or equal than $5$ can be reconstructed from their chromatic symmetric functions.
\end{coro}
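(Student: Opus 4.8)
The plan is to reduce the statement to the two reconstruction results for strictly weighted stars and $2$-stars, using the core operation together with Theorem~\ref{thm:dyx}. Starting from a proper tree $G$ of diameter at most $5$, I would form the marked graph $(G,\mathbbm{1})$ and pass to its core $(K,\mathsf{m}_K)$. The key structural observation is that $(K,\mathsf{m}_K)$ is a \emph{strictly weighted} star or $2$-star. Indeed, during the core computation the only vertices that ever get absorbed are the leaves of $G$: a vertex can be absorbed only when it has mark $(1,0)$, and a mark stays $(1,0)$ only if no absorption has taken place at that vertex, which in turn keeps its degree equal to its degree in $G$; so it must already be a leaf of $G$. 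Hence $K$ is, as an unweighted tree, obtained by deleting all leaves of $G$, and each surviving vertex $u$ receives weight $1$ plus the number of its leaf neighbours in $G$. Since $G$ is proper, every surviving vertex has at least one leaf neighbour, so all weights of $K$ are at least $2$ and $(K,\mathsf{m}_K)$ is strictly weighted with no absorbable vertices. Moreover, as the endpoints of a longest path of $G$ are leaves, deleting the leaves of $G$ decreases the diameter by $2$ whenever $\operatorname{diam}(G)\ge 2$; thus $\operatorname{diam}(K)\le 3$ (with $\operatorname{diam}(G)\le 1$ giving a single vertex or edge). A weighted tree of diameter at most $3$ is a star or a $2$-star, as already noted. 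Conversely, attaching $w-1$ pendant leaves to each weight-$w$ vertex of such a $K$ yields a proper tree of diameter at most $5$ whose core is $(K,\mathsf{m}_K)$, and this proper tree is determined up to isomorphism by $(K,\mathsf{m}_K)$.

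Next I would show that $X_G$ determines $D_{(K,\mathsf{m}_K)}(\mathbf{z})$ as a polynomial. By Theorem~\ref{thm:dyx}, $X_G=D_{(K,\mathsf{m}_K)}(z_{w,0}=\st_w,\,y=0)$, and since $K$ is a tree its $D$-polynomial carries no $y$ and is a polynomial in $\{z_{w,0}\}_{w\ge 1}$, with a monomial $z_{w_1,0}\cdots z_{w_k,0}$ sent to $\st_{(w_1,\dots,w_k)}$. Distinct monomials are therefore sent to distinct $\st_\lambda$, and by Theorem~\ref{thm:cho} these are linearly independent, so the substitution $z_{w,0}\mapsto\st_w$ is injective on polynomials in the $z_{w,0}$. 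Consequently $X_G$ recovers $D_{(K,\mathsf{m}_K)}(\mathbf{z})$.

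Finally, I would invoke the earlier reconstruction results: Corollary~\ref{c:leaves} reads off from $D_{(K,\mathsf{m}_K)}(\mathbf{z})$ whether $K$ is a star or a $2$-star, and then Theorem~\ref{thm:starsfromD} (star case) or Theorem~\ref{thm:2starsfromD} ($2$-star case) reconstructs $(K,\mathsf{m}_K)$ up to $\omega$-isomorphism; reattaching the $w-1$ leaves at each weight-$w$ vertex then recovers $G$ up to isomorphism. I do not expect a genuine obstacle at this stage: the substantive work has already been carried out in Theorems~\ref{thm:starsfromD} and~\ref{thm:2starsfromD} (and, on the $M$-polynomial side, Theorems~\ref{thm:starsMreconstr} and~\ref{t:m2star}). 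What remains here are only the elementary structural facts about cores of proper trees of small diameter and the injectivity of the star substitution, both of which are routine.
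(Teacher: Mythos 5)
Your proposal is correct and follows essentially the same route as the paper: pass from the proper tree $G$ to its core, observe that the core of a proper tree of diameter at most $5$ is a strictly weighted star or $2$-star, recover $D_{(K,\mathsf{m}_K)}$ from $X_G$ via Theorem~\ref{thm:dyx} and the fact that the $\st_\lambda$ form a basis, and then invoke Theorems~\ref{thm:starsfromD} and~\ref{thm:2starsfromD}. The paper states this deduction very tersely, so your write-up simply makes explicit the structural facts about cores and the injectivity of the star substitution that the authors leave implicit.
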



\begin{thebibliography}{10}

\bibitem{aliniaeifard2020extended}
Farid Aliniaeifard, Victor Wang, and Stephanie van Willigenburg.
\newblock Extended chromatic symmetric functions and equality of ribbon schur
  functions.
\newblock {\em arXiv preprint arXiv:2010.00147}, 2020.

\bibitem{alistecodeI}
Jos\'e Aliste-Prieto.
\newblock Reconstructing trees from their marked graph polynomials.
\newblock Code available at
  \texttt{https://github.com/jaliste/marked\_graphs\_one}, 2022.

\bibitem{aliste2021vertex}
Jos{\'e} Aliste-Prieto, Logan Crew, Sophie Spirkl, and Jos{\'e} Zamora.
\newblock A vertex-weighted {T}utte symmetric function, and constructing graphs
  with equal chromatic symmetric function.
\newblock {\em Electronic journal of combinatorics}, 28(2), 2021.

\bibitem{aliste2022markedII}
Jos{\'e} Aliste-Prieto, Anna de~Mier, Rosa Orellana, and Jos{\'e} Zamora.
\newblock Marked graphs and the chromatic symmetric function ii.
\newblock {\em in preparation}, 2021.

\bibitem{aliste2017trees}
Jos{\'e} Aliste-Prieto, Anna de~Mier, and Jos{\'e} Zamora.
\newblock On trees with the same restricted {${U}$}-polynomial and the
  {P}rouhet--{T}arry--{E}scott problem.
\newblock {\em Discrete Mathematics}, 340(6):1435--1441, 2017.

\bibitem{aliste2014proper}
Jos{\'e} Aliste-Prieto and Jos{\'e} Zamora.
\newblock Proper caterpillars are distinguished by their chromatic symmetric
  function.
\newblock {\em Discrete Mathematics}, 315:158--164, 2014.

\bibitem{chan2021tree}
William Chan and Logan Crew.
\newblock Tree bases of chromatic symmetric functions.
\newblock {\em arXiv preprint arXiv:2110.15291}, 2021.

\bibitem{chmutovshah}
Chmutov.
\newblock Private communication, 2021.

\bibitem{cho2016chromatic}
Soojin Cho and Stephanie van Willigenburg.
\newblock Chromatic bases for symmetric functions.
\newblock {\em The Electronic Journal of Combinatorics}, 23(1):P1--15, 2016.

\bibitem{crew2020deletion}
Logan Crew and Sophie Spirkl.
\newblock A deletion--contraction relation for the chromatic symmetric
  function.
\newblock {\em European Journal of Combinatorics}, 89:103143, 2020.

\bibitem{dahlberg2017lollipop}
Samantha Dahlberg and Stephanie van Willigenburg.
\newblock Lollipop and lariat symmetric functions.
\newblock {\em arXiv preprint arXiv:1702.06974}, 2017.

\bibitem{ellis2011tutte}
Joanna Ellis-Monaghan and Iain Moffatt.
\newblock The {T}utte--{P}otts connection in the presence of an external
  magnetic field.
\newblock {\em Advances in Applied Mathematics}, 47(4):772--782, 2011.

\bibitem{gasharov1999stanley}
Vesselin Gasharov.
\newblock On stanley's chromatic symmetric function and clawfree graphs.
\newblock {\em Discrete mathematics}, 205(1-3):229--234, 1999.

\bibitem{grinberg2020hopf}
Darij Grinberg and Victor Reiner.
\newblock {\em Hopf algebras in combinatorics}.
\newblock Mathematisches Forschungsinstitut Oberwolfach gGmbH, 2020.

\bibitem{guay2013modular}
Mathieu Guay-Paquet.
\newblock A modular relation for the chromatic symmetric functions of (3+
  1)-free posets.
\newblock {\em arXiv preprint arXiv:1306.2400}, 2013.

\bibitem{heil2019algorithm}
S~Heil and C~Ji.
\newblock On an algorithm for comparing the chromatic symmetric functions of
  trees.
\newblock {\em Australasian Journal of Combinatorics}, 75(2):210--222, 2019.

\bibitem{OEIS}
OEIS~Foundation Inc.
\newblock The on-line encyclopedia of integer sequences, 2019.
\newblock [Online].

\bibitem{loebl2018isomorphism}
Martin Loebl and Jean-S{\'e}bastien Sereni.
\newblock Isomorphism of weighted trees and {S}tanley's isomorphism conjecture
  for caterpillars.
\newblock {\em Annales de l’Institut Henri Poincar{\'e} D}, 01 2018.

\bibitem{macdonald1998symmetric}
Ian~Grant Macdonald.
\newblock {\em Symmetric functions and Hall polynomials}.
\newblock Oxford university press, 1998.

\bibitem{martin2008distinguishing}
Jeremy~L Martin, Matthew Morin, and Jennifer~D Wagner.
\newblock On distinguishing trees by their chromatic symmetric functions.
\newblock {\em Journal of Combinatorial Theory, Series A}, 115(2):237--253,
  2008.

\bibitem{noble99weighted}
Steven Noble and Dominic Welsh.
\newblock A weighted graph polynomial from chromatic invariants of knots.
\newblock {\em Ann. Inst. Fourier (Grenoble)}, 49(3):1057--1087, 1999.
\newblock Symposium \`a la M\'emoire de Fran\c cois Jaeger (Grenoble, 1998).

\bibitem{orellana2014graphs}
Rosa Orellana and Geoffrey Scott.
\newblock Graphs with equal chromatic symmetric functions.
\newblock {\em Discrete Mathematics}, 320:1--14, 2014.

\bibitem{penaguiao2018kernel}
Raul Penaguiao.
\newblock The kernel of chromatic quasisymmetric functions on graphs and
  nestohedra.
\newblock {\em arXiv preprint arXiv:1803.08824}, 2018.

\bibitem{sagan2001symmetric}
Bruce~E Sagan.
\newblock {\em The symmetric group: representations, combinatorial algorithms,
  and symmetric functions}, volume 203.
\newblock Springer, 2001.

\bibitem{Scott-thesis}
Geoffrey Scott.
\newblock Characterizing graphs with equal chromatic functions.
\newblock Senior Thesis, Dartmouth College, 2008.

\bibitem{shareshian2016chromatic}
John Shareshian and Michelle~L Wachs.
\newblock Chromatic quasisymmetric functions.
\newblock {\em Advances in Mathematics}, 295:497--551, 2016.

\bibitem{stanley95symmetric}
Richard~P. Stanley.
\newblock A symmetric function generalization of the chromatic polynomial of a
  graph.
\newblock {\em Adv. Math.}, 111(1):166--194, 1995.

\bibitem{stanley1999enumerative}
Richard~P Stanley.
\newblock {\em Enumerative combinatorics. Vol. 2, volume 62 of Cambridge
  Studies in Advanced Mathematics}.
\newblock Cambridge University Press, Cambridge, 1999.

\bibitem{thatte2020connected}
Bhalchandra~D Thatte.
\newblock The connected partition lattice of a graph and the reconstruction
  conjecture.
\newblock {\em Journal of Graph Theory}, 93(2):181--202, 2020.

\bibitem{west2001introduction}
Douglas West.
\newblock {\em Introduction to graph theory}.
\newblock Prentice hall, 2001.

\end{thebibliography}
\end{document}